\DeclareRobustCommand{\SkipTocEntry}[5]{}
\definecolor{blue}{rgb}{.255,.41,.884} 
\definecolor{red}{rgb}{1, 0, 0} 
\definecolor{green}{rgb}{.196,.804,.196} 
\definecolor{yellow}{rgb}{1,.648,0} 
\definecolor{pink}{rgb}{1,0.5,0.5}
\newtheorem{theorem}{Theorem}[section]
\newtheorem{lemma}[theorem]{Lemma}
\newtheorem{proposition}[theorem]{Proposition}
\newtheorem{corollary}[theorem]{Corollary}
\newtheorem{conjecture}[theorem]{Conjecture}
\theoremstyle{definition}
\newtheorem{definition}[theorem]{Definition}
\theoremstyle{remark}
\newtheorem{remark}[theorem]{Remark}
\newcommand{\setEnvironmentQed}[2]{
  \AtBeginEnvironment{#1}{%
    \pushQED{\qed}\renewcommand{\qedsymbol}{#2}%
  }
  \AtEndEnvironment{#1}{\popQED}
}
\newcommand{\Conv}{\mathop{\scalebox{1.5}{\raisebox{-0.2ex}{$\ast$}}}}
\newcommand{\II}{{\rm  I\hspace{-.2mm}I}}
\newcommand{\IIo}{\hspace{0.4mm}\mathring{\rm{ I\hspace{-.2mm} I}}{\hspace{.0mm}}}
\newcommand{\Fo}{ {\hspace{.6mm}} \mathring{\!{ F}}{\hspace{.2mm}}}
\newcommand{\IIIo}{{\mathring{{\bf\rm I\hspace{-.2mm} I \hspace{-.2mm} I}}{\hspace{.2mm}}}{}}
\newcommand{\IVo}{{\mathring{{\bf\rm I\hspace{-.2mm} V}}{\hspace{.2mm}}}{}}
\newcommand{\Vo}{{\mathring{{\bf\rm V}}}{}}
\newcommand{\otop}{\mathring{\top}}
\newcommand{\ct}{\mathcal{T}}
\newcommand{\cU}{\mathcal{U}}
\newcommand{\cK}{\mathcal{K}}
\newcounter{mnotecount}
\newcommand{\mnotex}[1]
{\protect{\stepcounter{mnotecount}}$^{\mbox{\footnotesize $\bullet$\themnotecount}}$ 
\marginpar{
\raggedright\tiny\em
$\!\!\!\!\!\!\,\bullet$\themnotecount: #1} }
\numberwithin{equation}{section}
\newcommand{\cc}{\boldsymbol{c}}
\newcommand{\bg}{\boldsymbol{g}}
\renewcommand{\=}{\stackrel\Sigma =}
\DeclareMathOperator{\tr}{tr}
\renewcommand\geq{\geqslant}
\renewcommand\leq{\leqslant}
\newcommand\reallywidehat[1]{%
\savestack{\tmpbox}{\stretchto{%
  \scaleto{%
    \scalerel*[\widthof{\ensuremath{#1}}]{\kern-.6pt\bigwedge\kern-.6pt}%
    {\rule[-\textheight/2]{1ex}{\textheight}}
  }{\textheight}%
}{0.5ex}}%
\stackon[1pt]{#1}{\tmpbox}%
}
\newcommand{\csdot}{\hspace{-0.75mm} \cdot \hspace{-0.75mm}}
\newcommand{\bdot }{\mathop{\lower0.33ex\hbox{\LARGE$\cdot$}}}
\definecolor{ao}{rgb}{0.0,0.0,1.0}
\definecolor{forest}{rgb}{0.0,0.3,0.0}
\definecolor{red}{rgb}{0.8, 0.0, 0.0}
\newcommand{\FF}[1]{\mathring{\underline{\overline{\rm{#1}}}}}
\newcommand{\ce}{\mathcal{E}}
\begin{document}

\subjclass[2020]{
53C18, 53A55, 53C21, 58J32.
}

\renewcommand{\today}{}
\title{Conformal hypersurface invariants and Bach-type Boundary Problems
}
%
%
%

\author{ Samuel Blitz${}^\flat$ \&  Rod Gover${}^\sharp$}

\address{${}^\flat$
 Department of Mathematics and Statistics \\
 Masaryk University\\
 Building 08, Kotl\'a\v{r}sk\'a 2 \\
 Brno, CZ 61137} 
   \email{sam.h.blitz@gmail.com}

\address{${}^\sharp$
  Department of Mathematics\\
  The University of Auckland\\
  Private Bag 92019\\
  Auckland 1142\\
  New Zealand,  and\\
  Mathematical Sciences Institute, Australian National University, ACT 
  0200, Australia} \email{r.gover@auckland.ac.nz}
 
\vspace{10pt}


\maketitle

\pagestyle{myheadings} \markboth{S. Blitz \& A. R. Gover}{Conformal hypersurface invariants}

\begin{abstract}
  Using variational considerations, we establish that there exists a
  new symmetric trace-free tensor conformal invariant of hypersurfaces
  embeddings in even dimensional conformal manifolds. This conformal
  invariant completes the family of conformal invariants known as
  conformal fundamental forms.

  The object has important links to global problems.  In the context
  of the even dimensional boundary-value Poincar\'e--Einstein problem,
  the image of the Dirichlet--to--Neumann map is conformally
  invariant. Recent investigations~\cite{BGKW} established that this
  image is the pullback of a particular Riemannian invariant to the
  odd-dimensional boundary. We show here that, in fact, that image
  arises as the restriction of the new conformal invariant constructed
  here.
As a consequence of the proof, we are able to construct several
new global conformal invariants of the boundary. Finally, we use
our variational results to establish that compact Bach-flat manifolds
with umbilic boundary must admit a (formal to all orders)
Poincar\'e--Einstein metric in the conformal class of its interior.

\vspace{0.5cm}

\noindent
{\sf \tiny Keywords: Conformal geometry, extrinsic conformal geometry
  and hypersurface embeddings, Bach flat
  equations, Poincar\'e-Einstein metrics}

\end{abstract}

\vspace{2cm}

\section{Introduction}

This work concerns the geometric study of conformal {\em
  hypersurfaces}, meaning embedded submanifolds of codimension one in
conformal manifolds of dimension $d$. Hypersurfaces are especially
important, in part because they arise as the boundaries of domains and
manifolds with boundary and in these settings their geometry plays a
pivotal role in many geometric boundary problems. Turning this around
boundary problems can sometimes be used to help recover the geometry
of hypersurfaces; this idea has been effectively applied in number of
recent settings \cite{Graham-renvol,GW-renvol,AGW,will1,BGW1,BGKW} and it will also be exploited here.

While describing the local extrinsic geometry of Riemannian
submanifolds is a classical subject \cite{doCarmo,kobayashinomizu,spivak}, the corresponding
conformal theory has, apart from a few basic elements (see for example \cite{Fialkow}), been developed far more recently and is considerably more subtle and
rich than its Riemannian counterpart. Even obtaining local invariants
effectively is not trivial. For a conceptual and practical approach it
is natural to exploit the conformally invariant Cartan connection
\cite{Cartan} or its associated bundle analogue known as the tractor
connection \cite{BEG}, and the associated calculus.  A
comprehensive basic conformal submanifold theory is developed in
\cite{curry-gover-snell} building on \cite{curry-thesis,stafford}
and this can be used to proliferate local invariants.

A distinguished class of extrinsic hypersurface invariants was
recently found and constructed in a rather different way
\cite{BGW1,Blitz1}. These {\em higher conformal fundamental forms}
$$
\IIo ,\IIo, \cdots ,\FF{k}   \qquad k\leq \left\{ \begin{array}{ll}d-1 & d \,\, \mbox{even}\\
  \frac{d+1}{3} & d \,\, \mbox{odd}
  \end{array} \right. .
$$
are natural trace-free symmetric tensors, each of which is an
extrinsic hypersurface conformal invariant. Here $\IIo$ is simply the
trace-free part of the usual second fundamental form, while $\IIIo$ is
the trace-free part of a conformal invariant due to Fialkow (that also
arises naturally in the tractor picture
\cite{curry-gover-snell,stafford,vyatkin}), but the higher $\FF{k}$
are new objects that probe higher jets of the embedding.  They are
central objects in two ways. First they provide a generating basis of
extrinsic invariants in the sense that any local irreducible natural
hypersurface invariant (with not too high a transverse order---see
below for the meaning of this) can be expressed by partial contraction
polynomials which involve only intrinsic hypersurface invariants and these
fundamental forms \cite{blitz-classifying}.

The second reason is through their relation to Poincar\'{e}-Einstein
manifolds. On a manifold $M$ with boundary $\Sigma:=\partial M$ an
interior Riemannian metric $g_+$ is said to be conformally compact if
takes the form $g_+=r^{-2}g$ where $g$ is a metric on $M$ and $r$ is a
defining function for the boundary -- meaning $\Sigma=r^{-1}(0)$ and
$dr$ is nowhere zero along $\Sigma$. If $g_+$ is Einstein then its
scalar curvature is negative and $|dr|^2_g$ is constant (and, by
scaling $g_+$, is set to 1) along $\Sigma$, then it is said to
be a {\em Poincar\'e-Einstein (PE) metric}. It is well known that
$g_+$ being PE also implies $\IIo=0$, and also that the trace-free
part of the Fialkow tensor $\Fo$ vanishes \cite{Fialkow,GoverAE}, but
in fact each of the $\FF{k}$ must vanish \cite{BGW1}. Thus the higher
conformal fundamental forms provide order-by-order obstructions to the
PE condition.


We henceforth restrict to the case of $d$ even. At order $k=d$
(involving the $d-1$-jet of $(r,g)$) we do not expect local
obstructions to the PE condition, as this is the order at which global
data enters the asymptotics \cite{graham2005dirichlet,BGKW}. However this does
not exclude the existence of a conformal fundamental form at this
order. Indeed in the work \cite{BGKW}  the authors describe a local
curvature invariant of PE manifolds that precisely captures the image
of the Dirichlet-Neumann type map associated with the PE filling
problem. A natural question, first posed in~\cite{BGKW}, is whether this invariant is the
restriction to PE manfolds of a local conformal hypersurface invariant
(that exists on any even conformal manifold with boundary). The
invariant
$$\IVo_{ab} := C_{n(ab)}^\top + H W_{nabn} - \bar{\nabla}^c W_{c(ab)n}^\top$$ of \cite{BGW1} is an invariant of hypersurfaces in conformal
manifolds of dimension $d=4$ that is exactly of this type: when
applied along the boundary of a PE manifold, it recovers the invariant $\operatorname{DN}^{(4)}$ of \cite{BGKW}. Our main result is that a similar
result is available in all even dimensions.
\begin{theorem}\label{main}
For a hypersurface embedded in a conformal manifold $\Sigma \hookrightarrow (M^d,\cc)$ of even dimension $d$ there
is natural conformal extrinsic invariant $\FF{d} \in \Gamma(\odot^2_\circ T^* \Sigma[3-d])$ such that $\FF{d}$ has transverse order $d-1$.
  \end{theorem}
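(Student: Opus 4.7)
Proof plan. The strategy is variational: we realise $\FF{d}$ as the boundary Euler--Lagrange tensor of a suitably chosen conformally invariant global functional on $(M^d, \cc, \Sigma)$. The underlying logic is that pairing a symmetric $(0,2)$-variation of the boundary conformal structure against a conformal invariant number must produce a conformally invariant symmetric $(0,2)$-tensor on $\Sigma$; trace-freeness then follows automatically, since the trace part of any such variation corresponds to an infinitesimal conformal rescaling of $\bar g$, under which the number is invariant.

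First, I would fix $I[g, \Sigma]$ to be a global conformal invariant of the pair $(\cc, \Sigma)$ whose definition requires the full $(d-1)$-jet of $g$ transverse to $\Sigma$. Natural candidates are the anomaly coefficient of the singular Yamabe or PE volume expansion of \cite{graham2005dirichlet}, or an integral of a conformally invariant curvature density constructed via the tractor or ambient calculus from the normal jets of $g$ at $\Sigma$. Such an $I$ is nontrivial at transverse order $d-1$, where the free data in the PE asymptotics first enters, and by construction it is not reducible to integrals involving only the previously known fundamental forms $\IIo, \IIIo, \ldots, \FF{d-1}$.

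Next, I would compute the first variation of $I$ under an infinitesimal deformation $\delta g$ of the bulk metric supported in a collar of $\Sigma$. After integrating by parts and choosing an appropriate diffeomorphism/conformal gauge for $\delta g$, the variation localises on $\Sigma$ in the form
\begin{equation*}
  \delta I \;=\; \int_\Sigma \langle \delta \bar g,\, \FF{d}\rangle \, dv_{\bar g},
\end{equation*}
with $\FF{d}$ a symmetric tensor of conformal weight $3-d$; the weight is forced by requiring the integrand to be a density of weight $1-d$ dual to $dv_{\bar g}$ when $\delta \bar g$ is a weight $-2$ tensor on $\Sigma$. Trace-freeness of $\FF{d}$ follows from the rescaling argument above, and conformal invariance of $\FF{d}$ itself follows from the representative-independence of $I$, since the pairing on the right is intrinsic to $\cc$. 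Because $I$ depends only on the $(d-1)$-jet of $g$ at $\Sigma$ and the variation introduces no further transverse derivatives in the chosen gauge, $\FF{d}$ has transverse order at most $d-1$; equality follows from the nontriviality of $I$ on this jet.

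The principal obstacle is the middle step: one must cleanly isolate the symmetric trace-free $(0,2)$-tensor piece of $\delta I$ from scalar, divergence, and higher-transverse-derivative contributions arising in the boundary variation, and one must ensure the chosen gauge for $\delta g$ is compatible with the conformal invariance of $I$ so that no spurious gauge-dependent terms survive. A secondary subtlety is verifying that $\FF{d}$ is genuinely new and not expressible as a partial contraction polynomial in the fundamental forms $\IIo, \IIIo, \ldots, \FF{d-1}$; this would be confirmed by restricting to a PE filling, where all lower fundamental forms vanish identically and $\FF{d}$ must reproduce the nontrivial Dirichlet--Neumann invariant $\operatorname{DN}^{(d)}$ of \cite{BGKW}, thereby also yielding the promised identification.
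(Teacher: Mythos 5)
Your overall strategy---realising $\FF{d}$ as the coefficient of the boundary term in the first variation of a conformally invariant bulk functional---is the same as the paper's, and your trace-freeness argument (pure-trace variations are infinitesimal conformal rescalings) matches the one used there. However, the proposal has a genuine gap at exactly the step you flag as ``the principal obstacle,'' and that step is the entire content of the proof. The boundary term produced by varying a functional built from the $(d-1)$-jet of $g$ does \emph{not} localise to a single pairing $\int_\Sigma \langle \delta\bar g, \FF{d}\rangle$: after integrating by parts in the bulk it contains, schematically, terms $\bar P_{(i)}\, n^an^b\nabla_n^i h_{ab}$, $\bar Q^a_{(i)}\, n^b\nabla_n^i h_{ab}$ and $\bar R^{ab}_{(i)}\,\otop\nabla_n^i h_{ab}$ for all $0\le i\le d-3$, and these transverse jets of $h$ along $\Sigma$ are genuinely independent data that cannot be removed by a choice of diffeomorphism or conformal gauge. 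Worse, the splitting into these components is not conformally invariant (e.g.\ $\top(n^b\nabla_n \mathring h_{ab})$ picks up $\bar\Upsilon$-terms involving $\mathring h_{nn}$ and $\mathring h_{ab}^\top$ under rescaling), so conformal invariance of the total boundary integral does not by itself imply that the coefficient of the undifferentiated $\otop\mathring h_{ab}$ is separately invariant. The paper resolves this by constructing, via tractor splitting operators and the conformally invariant normal-derivative operators $\delta_K$ and $\delta_{J,k}$, a family of invariant operators $\delta^{(k)}_{\otop},\delta^{(k)}_n,\delta^{(k)}_{nn}$ on $\Gamma(\odot^2_\circ T^*M[2])$ for $k\le d-3$ with nonvanishing leading transverse symbols (Proposition~\ref{normal-operators}), and then inductively regroups the boundary integrand from $i=d-3$ downward so that each coefficient is forced to be pointwise conformally invariant; the low-order residue involving $\top(n^b\nabla_n\mathring h_{ab})$ requires an extra explicit correction by $\bar\nabla^{(a}\hat Q^{b)_\circ}_{(1)}$. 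Without an argument of this kind your ``appropriate gauge'' step fails.

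Two further points. First, your choice of functional is underspecified in a way that matters: the paper uses the specific pointwise invariant $\mathcal L_d$ (essentially $|\tilde\nabla^{(d-4)/2}\tilde R|^2$, unique modulo cubic curvature terms) precisely because its Euler--Lagrange tensor is the Fefferman--Graham obstruction tensor, which guarantees---by tracking $B_{ab}=\nabla^cC_{cab}+\cdots$ through the integrations by parts---that the coefficient of $\otop\mathring h_{ab}$ has the explicit nonzero leading term $\otop\bigl(n^d\Delta^{\frac{d-4}{2}}C_{d(ab)}\bigr)$ of transverse order $d-1$. A ``singular Yamabe anomaly'' functional would not obviously deliver this, and ``equality follows from the nontriviality of $I$ on this jet'' is not a proof that the leading coefficient survives the regrouping uncancelled. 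Second, your closing paragraph on novelty and $\operatorname{DN}^{(d)}$ concerns Theorem~\ref{DN}, which in the paper is a separate corollary requiring the uniqueness of the leading symbol and the vanishing of subleading invariants on PE backgrounds; it is not needed for, and does not substitute for, the missing invariant-decomposition step above.
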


In the above and what follows, the \textit{transverse order} of an
extrinsic invariant is the integer $k$ such that, when the invariant
$I[g]$ is evaluated on a hypersurface embedding $\Sigma
\hookrightarrow (M,g+s^{\ell} h)$ with $s$ a defining function for
$\Sigma$ and for any choices of metric $g$ and rank-2 tensor $h$, we
find that $I[g] = I[g+s^{\ell} h]$ for all $\ell > k$, and $I[g] \neq
I[g + s^k h]$. The transverse order of a differential operator is
similarly defined.
See~\cite{BGW1,BGKW} for more details.

It is
straightforward to verify that the transverse order, weight (as described in Section \ref{sec:tractors}), and
symmetry properties of $\FF{d}$ above are sufficient to uniquely
characterize the leading transverse-order structure of this invariant
(see~\cite{BGW1}).  Moreover this uniqueness is enough to give the
following:
\begin{theorem}\label{DN}
On even-dimensional conformal manifolds $(M^d,\cc)$ with boundary that
admit a Poincar\'e--Einstein structure, the map
\begin{equation}\label{d-map}
  \left(\partial M \hookrightarrow (M, \cc) \right) \mapsto \FF{d}
  \end{equation}
recovers the Poincar\'e--Einstein Dirichlet-to-Neumann map of \cite{BGKW}.
\end{theorem}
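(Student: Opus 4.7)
The plan is to combine Theorem~\ref{main} with the uniqueness characterization of natural conformal hypersurface invariants of the stated weight, symmetry, and transverse order, and then to reconcile normalisations with the Dirichlet--to--Neumann invariant $\operatorname{DN}^{(d)}$ of \cite{BGKW}. The key conceptual point is that both tensors live in the same representation-theoretic slot and both have their leading transverse behaviour pinned down by the same structural constraints, so on a background where all lower conformal fundamental forms vanish they must agree up to a universal scalar.

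First I would invoke Theorem~\ref{main} to produce $\FF{d} \in \Gamma(\odot^2_\circ T^*\Sigma[3-d])$ with transverse order $d-1$, and recall from \cite{BGW1} (cited in the paragraph immediately preceding the theorem) that the triple \emph{(trace-free symmetric, weight $3-d$, transverse order $d-1$)} suffices to characterise the leading transverse-order part of any such hypersurface invariant up to an overall constant. Next I would recall from \cite{BGKW} that the image of the PE Dirichlet-to-Neumann map is realised by a local Riemannian curvature invariant $\operatorname{DN}^{(d)}$ along the boundary which is a trace-free symmetric tensor of the same conformal weight $3-d$, with its transverse order likewise equal to $d-1$ (this being exactly the order at which non-local boundary data enters the asymptotic expansion for the PE filling problem).

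Third, I would exploit the PE hypothesis. On a PE background the conformal fundamental forms $\IIo,\IIIo,\ldots,\FF{d-1}$ all vanish identically by the results cited in the introduction. Therefore any sub-leading correction to $\FF{d}$ --- which by the general classification of extrinsic invariants can only be built from tangential curvatures and tangential derivatives of the lower $\FF{k}$ --- necessarily vanishes when $\FF{d}$ is restricted to a PE manifold. The same dimensional, weight, and order considerations apply to $\operatorname{DN}^{(d)}$. Combined with the uniqueness of the leading transverse-order piece this forces $\FF{d}\big|_{\mathrm{PE}} = \lambda \operatorname{DN}^{(d)}$ for some universal constant $\lambda$.

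Finally I would fix $\lambda$ by tracking the leading-order coefficient through the variational construction of $\FF{d}$ used in the proof of Theorem~\ref{main} and comparing it with the explicit leading term of $\operatorname{DN}^{(d)}$ given in \cite{BGKW}; equivalently, one may evaluate both sides on a single non-trivial PE example (for instance a hyperbolic ball perturbed at transverse order $d-1$) where the transverse jet of the metric can be computed explicitly. The main obstacle is this last step of matching the overall normalisation: one must be careful that conventions for the defining function, the choice of conformal representative, and the sign of the transverse derivative used in defining $\operatorname{DN}^{(d)}$ align with those implicit in the variational construction of $\FF{d}$. Once the constant is identified, the identification \eqref{d-map} follows.
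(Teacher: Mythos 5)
Your overall architecture matches the paper's: invoke Theorem~\ref{main}, use the uniqueness of the leading transverse-order part of a trace-free symmetric invariant of weight $3-d$ and transverse order $d-1$, and then argue that on a PE background all subleading ambiguity disappears so that $\FF{d}$ must coincide with $\operatorname{DN}^{(d)}$ up to normalisation. However, there is one genuine gap in your third step. You assert that any subleading correction ``can only be built from tangential curvatures and tangential derivatives of the lower $\FF{k}$'' and hence vanishes on a PE manifold. The first half of that sentence already contains terms that do \emph{not} vanish on a PE background: purely intrinsic conformal invariants of $(\Sigma,\bar{\cc})$ (built from the intrinsic Weyl tensor, intrinsic Cotton/Bach-type tensors, etc.) survive the PE condition, since the boundary conformal class is arbitrary Dirichlet data. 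The classification results you are implicitly using (\cite{blitz-classifying}, \cite{BGW1}) only kill the \emph{extrinsic} subleading terms on a PE background; they leave open an additive ambiguity by intrinsic invariants of $\Sigma$. The paper closes this loophole with a separate parity argument: the target bundle is $\odot^2_\circ T^*\Sigma[3-d]$, the weight $3-d$ is odd (as $d$ is even), and every natural intrinsic conformal invariant of the odd-dimensional boundary has even weight, so no such intrinsic term exists. Without this observation your identification $\FF{d}|_{\mathrm{PE}} = \lambda\,\operatorname{DN}^{(d)}$ is not yet forced. Your final normalisation step is consistent with (indeed slightly more careful than) what the paper does, so once the intrinsic-term loophole is closed the argument goes through.
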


This Theorem is a key motivation for our work here.  Fixing a manifold
with boundary $(M,\Sigma=\partial M)$, the Poincar\'e-Einstein filling
problem is the PDE problem of finding a Poincar\'e-Einstein metric
$g_+$ on the interior that is compatible with a given conformal
structure $\cc_\Sigma$ on $\Sigma$. The latter is viewed as the
Dirichlet data for this non-linear problem and the corresponnding
Dirichlet-to-Neumann map, introduced in \cite{graham2005dirichlet} and studied
in, for example \cite{gover2007conformal,wang2019dirichlet,BGKW}, is the map
that extracts the key non-locally determined data in the metric
asymptotics. Thus the map carries information about the global
structure of $(M,\Sigma,\cc_\Sigma)$. It is not obvious that there
should generally be an invariant of conformal hypersurface embeddings
$\Sigma \hookrightarrow (M, \cc) $ that upon restriction recovers this
important quantity, but the Theorem establishes that this is the case.

 In Riemannian geometry the first fundamental form of a hypersurface
 is the induced metric thereon. In this sense for a conformal
 hypersurface $\Sigma$ we may think of the induced conformal metric as the
 first fundamental form, this is not trace-free but naturally its variations are given by sections of $S^2_oT^*\Sigma [2]$. Then $\FF{d}$ is the dual object in that  $\FF{d}$ has the right symmetries
 and weight to pair along the boundary with such variations:
 For any such $h_{ab}$
\begin{equation}\label{pair}
\int_\Sigma h^{ab}\FF{d}_{ab} \, dv_{g_\Sigma}
\end{equation}
is conformally invariant. Furthermore, the divergence $\bar{\nabla}^a \FF{d}_{ab}$ is also conformally invariant, and it appears as an obstruction to a conformal manifold with boundary admitting a Poincar\'e--Einstein metric in the conformal class on its interior $M_+$.
\begin{corollary} \label{divergence-corollary}
Let $\Sigma \hookrightarrow (M,\cc)$ be an even-dimensional conformal manifold with boundary. The divergence
$$\bar{\nabla}^a \FF{d}_{ab} \in \Gamma(T^*\Sigma[1-d])$$
is conformally invariant and is an obstruction to $\cc|_{M_+}$ admitting a Poincar\'e--Einstein metric.
  \end{corollary}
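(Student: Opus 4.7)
The corollary consists of two assertions: conformal invariance of the divergence $\bar{\nabla}^a \FF{d}_{ab}$, and its role as an obstruction to the existence of a Poincar\'e--Einstein filling on $M_+$. The plan is to handle them in turn; both follow quickly from results already in hand, with the first being essentially a weight-counting calculation.

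For conformal invariance, the approach is direct. Using the transformation law for the Levi-Civita connection on $\Sigma$ under $\hat{g}_\Sigma = e^{2\omega} g_\Sigma$, one computes that for any trace-free symmetric 2-tensor $T_{ab}$ on $\Sigma$ of conformal weight $w$, setting $\Upsilon_a := \bar{\nabla}_a \omega$ and $n := \dim \Sigma = d-1$,
\begin{equation*}
  \hat{\bar{\nabla}}^a T_{ab} \;=\; \bar{\nabla}^a T_{ab} \;+\; (w + n - 2)\,\Upsilon^a T_{ab}.
\end{equation*}
For $T = \FF{d}$, Theorem \ref{main} gives $w = 3 - d$, so the coefficient becomes $(3-d) + (d-1) - 2 = 0$, and therefore $\bar{\nabla}^a \FF{d}_{ab}$ is conformally invariant of weight $1-d$, as claimed. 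The critical-weight match here is not accidental: it is the same numerical balance that makes the pairing \eqref{pair} conformally well-defined. Indeed, one could alternatively deduce invariance of the divergence by integrating \eqref{pair} by parts against $h_{ab} = \bar{\nabla}_{(a} X_{b)}$ for arbitrary tangential vector fields $X$ on $\Sigma$; the invariance of the pairing and the arbitrariness of $X$ force the invariance of $\bar{\nabla}^a \FF{d}_{ab}$.

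For the obstruction claim, the plan is to invoke Theorem \ref{DN}. If $\cc|_{M_+}$ admits a Poincar\'e--Einstein filling, then $\FF{d}$ restricted to $\Sigma$ coincides with the Poincar\'e--Einstein Dirichlet-to-Neumann tensor of \cite{BGKW}. This tensor is precisely the non-locally determined coefficient at order $r^{d-1}$ in the Fefferman--Graham asymptotic expansion of the PE metric, and---$d-1$ being odd when $d$ is even---it is classically known to be both trace-free and divergence-free with respect to the boundary conformal representative (see \cite{graham2005dirichlet,BGKW}). Hence $\bar{\nabla}^a \FF{d}_{ab} \equiv 0$ along $\Sigma$ whenever a PE filling exists, so non-vanishing of this divergence obstructs such a filling. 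The only substantive inputs are the elementary divergence identity above and the Fefferman--Graham divergence constraint; the main point requiring care is ensuring that conventions for conformal weights and density pairings are lined up consistently between Theorem \ref{main} and the PE literature cited, so that the two identifications of $\FF{d}|_\Sigma$ with the Dirichlet-to-Neumann tensor match on the nose and not merely up to a universal scalar.
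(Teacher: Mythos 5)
Your proposal is correct and follows essentially the same route as the paper: a direct weight computation showing that the divergence on $\Gamma(\odot^2_\circ T^*\Sigma[3-d])$ is conformally invariant (your explicit coefficient $(w+n-2)$ with $w=3-d$, $n=d-1$ fills in what the paper leaves as a "straightforward computation"), followed by Theorem~\ref{DN} together with the divergence-free property of the Fefferman--Graham/Dirichlet-to-Neumann coefficient, which the paper cites as \cite[Remark 3.10]{BGKW}. Your closing worry about normalization is moot for the obstruction claim, since the vanishing of the divergence is insensitive to an overall constant.
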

This invariant also plays a role in the Bach-flat (see Equation~(\ref{bach-def} for a definition of the Bach tensor) boundary-value problem in four dimensions. In particular, one finds that the normal-tangential projection of the Bach tensor along the conformal boundary depends on $\bar{\nabla}^a \IVo_{ab}$.

\medskip 

The observation \eqref{pair} above is part of a more general
obervation that weights of the conformal fundamental forms are such
that they yield a family of extrinsic conformally invariant integrals along the conformal boundary:
\begin{proposition}
Let $\Sigma \hookrightarrow (M^d,\cc)$ be a conformal hypersurface embedding with $d$ even. Then, for each $k \in \{2, \ldots, \tfrac{d}{2}\}$, the integral
\begin{align} \label{pair2}
\int_{\Sigma} \FF{k}^{ab}\, \FF{d+1-k}_{ab} dV_{\bar{g}}
\end{align}
is conformally invariant.
\end{proposition}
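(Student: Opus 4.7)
The plan is to observe that, once the existence and conformal weight of $\FF{d}$ are granted (Theorem~\ref{main}), the proposition reduces to a bookkeeping of conformal weights. Each $\FF{k}$ is a natural trace-free symmetric conformal invariant along $\Sigma$, so the contraction $\FF{k}^{ab}\FF{d+1-k}_{ab}$ is a naturally defined conformal density on the boundary. The only content to verify is that the total weight of this density, combined with the hypersurface volume form on the $(d-1)$-manifold $\Sigma$, is zero.

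First I would record the weight of each conformal fundamental form. From the construction in~\cite{BGW1,Blitz1}, and consistent with $\IIo_{ab}$ having weight $1$ at $k=2$ and $\FF{d}_{ab}$ having weight $3-d$ by Theorem~\ref{main}, the fundamental form $\FF{k}$ is a section of $\odot^2_\circ T^*\Sigma[3-k]$. For $k$ in the stated range $\{2,\ldots,d/2\}$, the dual index $d+1-k$ lies in $\{d/2+1,\ldots,d-1\}$, so both $\FF{k}$ and $\FF{d+1-k}$ are among the previously-constructed higher conformal fundamental forms and are therefore natural conformal invariants of the embedding.

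The second step is the weight computation itself. Since the conformal metric has weight $2$, its inverse has weight $-2$, and hence $\FF{k}^{ab}=\bar g^{ac}\bar g^{bd}\FF{k}_{cd}$ has weight $(3-k)-4=-(k+1)$. The scalar density $\FF{k}^{ab}\FF{d+1-k}_{ab}$ therefore has weight
\[
-(k+1) + \bigl(3-(d+1-k)\bigr) \;=\; 1-d,
\]
independently of $k$. The volume form $dV_{\bar g}$ on the $(d-1)$-dimensional manifold $\Sigma$ has weight $d-1$, so the integrand carries total weight zero, and the integral is conformally invariant.

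There is essentially no obstacle in the argument itself, since all the invariance content sits in Theorem~\ref{main} and in the prior construction of the higher conformal fundamental forms. The only nontrivial place where something could have gone wrong is the weight of the new invariant $\FF{d}$: had it been anything other than $3-d$, the pairings~(\ref{pair2}) would not close up. That Theorem~\ref{main} produces an invariant of precisely this weight is what makes the family of invariant pairings run through the full middle range $2 \le k \le d/2$, with $k$ and $d+1-k$ playing symmetric roles.
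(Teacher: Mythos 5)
Your argument is correct and is essentially the paper's own proof: the paper disposes of this proposition in one line, noting that it ``follows simply from weight considerations and the conformal invariance of the conformal fundamental forms,'' and your weight bookkeeping ($\FF{k}$ of weight $3-k$, two inverse conformal metrics contributing $-4$, total integrand weight $1-d$ against a weight-$(d-1)$ volume density on $\Sigma$) is exactly that computation made explicit. One small correction to your closing remark: for $k\in\{2,\dots,d/2\}$ the dual index satisfies $d+1-k\le d-1$, so the new invariant $\FF{d}$ of Theorem~\ref{main} never actually appears in the pairings~\eqref{pair2} --- it enters only the separate pairing~\eqref{pair} with metric variations --- so the proposition does not in fact depend on Theorem~\ref{main}.
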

This proposition follows simply from weight considerations and the
conformal invariance of the conformal fundamental forms. The pairings
above are linked to conformally invariant Dirichlet and Neumann data
for PDE boundary problems constructed around the Fefferman-Graham
obstruction tensor---and similar natural tensors. The main results are
proved in Section~\ref{theorem-proof}. In
Section~\ref{boundary-problems}, we discuss variational problems
linked to these boundary problems and treat the ``grooming'' of the
boundary terms to manifest the pairings~\eqref{pair2}. We then examine
the conditions that lead to well-posed variational problems on
manifolds with boundary. In this same section, we treat in detail the
explicit variation in four dimensions.  In Section~\ref{sec:FG-flat},
we discuss the conditions in which Fefferman--Graham-flat manifolds
with boundary admit Poincar\'e--Einstein structures. In dimension 4,
the Fefferman--Graham tensor is the well-known Bach tensor $B$ and
the Bach-flat condition, namely $B = 0$, is a strict weakening of the
Einstein condition. Thus, in particular, PE manifolds are Bach flat. As mentioned above a PE manifold also necessarily has $\IIo=0$ along the the boundary.  
We prove that this necessary condition is also 
sufficient for a Bach-flat 4-manifold with boundary to admit a
Poincar\'e--Einstein metric on its interior:
\begin{theorem}
Let $(M^4,\cc)$ be a smooth Bach-flat conformal manifold with umbilic boundary. Then the conformal class of metrics on the interior of $M$ contains a metric that is, formally to all orders, Poincar\'e--Einstein.
\end{theorem}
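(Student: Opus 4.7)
The plan is to use the Fefferman--Graham normal form to build a formal Poincar\'{e}--Einstein representative order-by-order, and to show that ambient Bach-flatness combined with umbilicity supplies the compatibility conditions needed at each order. First, using the umbilic hypothesis, pass to a conformal representative $g \in \cc$ for which $\Sigma$ is totally geodesic, together with a geodesic defining function $r$ satisfying $|dr|_g = 1$ near $\Sigma$. In a collar neighborhood this places $g$ in FG normal form $g = dr^2 + h_r$ with $h_0 = \bar g$ and $\partial_r h_r|_{r=0} = 0$. Since $\dim \Sigma = 3$ is odd, the standard Graham--Lee theory guarantees that the formal Einstein recursion for $g_+ = r^{-2} g$ determines all coefficients $h^{(k)}$ from $h^{(0)} = \bar g$ together with a free trace-free ``Neumann'' datum at order three, with no logarithmic obstructions arising.

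The core content of the theorem is then that the Taylor coefficients of the given Bach-flat metric can be realigned with these PE recursion coefficients via an admissible conformal rescaling $g \mapsto e^{2\omega} g$. This is carried out by expanding the equation $B[g]_{ab} = 0$ in powers of $r$ in the FG normal form. At leading order, and after invoking umbilicity to eliminate second-fundamental-form contributions via the Gauss--Codazzi--Mainardi equations, the Bach equation collapses to exactly the PE recursion constraints on $h^{(2)}$ and $h^{(4)}$; higher transverse derivatives of $B[g] = 0$, evaluated at $r = 0$, then inductively reproduce the PE recursion at every higher order. The Neumann datum (trace-free part of $h^{(3)}$) is unconstrained by Bach-flatness and is read directly from the Taylor expansion of $g$. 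A parallel route uses Corollary~\ref{divergence-corollary}: in the Bach-flat, umbilic case the divergence $\bar\nabla^a \FF{4}_{ab}$ vanishes, and the uniqueness of the leading transverse structure of $\FF{4}$ established in Theorem~\ref{main} together with Theorem~\ref{DN} identifies the resulting extrinsic structure as the Dirichlet--to--Neumann image of a genuine formal PE filling.

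The main obstacle is showing that Bach-flatness, which is a fourth-order PDE, actually controls \emph{all} orders of the PE recursion once umbilicity is imposed. Its direct expansion at $r = 0$ constrains only the first few $h^{(k)}$; higher coefficients require iteratively transversally differentiating $B[g] = 0$ and using the contracted Bianchi-type identities satisfied by the Bach tensor. The careful bookkeeping of second-fundamental-form and Weyl-tensor terms through the Gauss--Codazzi--Mainardi decompositions at each order is the technical heart of the argument.
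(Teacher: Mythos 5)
There is a genuine gap, and it occurs already at the first nontrivial order of your proposed expansion. You assert that ``the Bach equation collapses to exactly the PE recursion constraints on $h^{(2)}$,'' i.e.\ that expanding $B[g]=0$ at $r=0$ forces the second FG coefficient to take its Poincar\'e--Einstein value, which with $\IIo=0$ in $d=4$ is equivalent to $\Fo_{ab}=-W_{nabn}=0$ along $\Sigma$. But the restriction of the Bach equation to the boundary does not give this: with $\IIo=0$ one computes (as in the paper)
$$B_{nn}\eqSig \bar{\nabla}\cdot\bar{\nabla}\cdot\Fo+|\Fo|^2+\bar{P}\cdot\Fo\,,$$
so $B_{nn}=0$ yields only a second-order PDE for $\Fo$ on $\Sigma$, not its pointwise vanishing. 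The paper closes this gap with a \emph{global} argument: diffeomorphism invariance of $\tfrac14\int_M|W|^2$ applied to variations $\delta g_{ab}=2\nabla_{(a}k_{b)}$ produces, after Bach-flatness and umbilicity, the boundary identity $\int_{\partial M}\bigl[(\bar{\nabla}\cdot\IVo^{b})k^\top_b-|\Fo|^2 k_n\bigr]dV_{\bar g}=0$, and the choices $k|_\Sigma=n$ and $k|_\Sigma$ tangential give $\Fo=0$ and $\bar{\nabla}\cdot\IVo=0$ respectively. Your proposal nowhere uses compactness of the boundary, yet the conclusion $\int_\Sigma|\Fo|^2=0\Rightarrow\Fo=0$ is where it enters essentially; a purely local order-by-order expansion cannot produce it. The same issue affects your ``parallel route'': you \emph{assert} that $\bar{\nabla}^a\FF{4}_{ab}$ vanishes in the Bach-flat umbilic case, but Corollary~\ref{divergence-corollary} only says this divergence obstructs the PE condition --- establishing its vanishing under the hypotheses is precisely the variational argument above, and it is the constraint needed for your free ``Neumann datum'' $h^{(3)}$ to be admissible (trace-free \emph{and} divergence-free).

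The second gap is the all-orders induction, which you acknowledge as ``the main obstacle'' but do not resolve. Bach-flatness is a conformally invariant fourth-order system strictly weaker than the Einstein equations (self-dual and anti-self-dual metrics are Bach-flat), so there is no reason that transversally differentiating $B[g]=0$ and invoking Bianchi identities reproduces the Einstein recursion at arbitrary order; some additional structural input is required. In the paper this input is threefold: the classification of extrinsic hypersurface invariants, a parity/weight argument (there are no nonzero natural intrinsic invariants of odd weight on the $3$-dimensional boundary, which kills the candidate obstructions to $\nabla_n^2 E=\nabla_n^3E=0$), and, crucially, the identity $\nabla\cdot\Omega=0$ for the tractor curvature on Bach-flat $4$-manifolds, which drives the induction $\nabla_n^kE\eqSig 0$ for all $k\geq 4$ applied to the almost-Einstein tensor $E=D_{ab}\sigma$ of the smooth singular Yamabe density. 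Your outline contains no analogue of any of these, so the inductive step remains unproved.
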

\noindent In fact, in Section~\ref{sec:FG-flat}, we prove a stronger
result that manifests how the metric arises from the conformal class,
see Theorem~\ref{umbilic-PE}.

\subsection{Conventions}
In this article, $M^d$ will refer to a smooth (meaning $C^\infty$) manifold with dimension $d$. When paired with a metric $g$ or an equivalence class $[g] = [\Omega^2 g]$ for $\Omega \in C^\infty M$, such a manifold is a Riemannian or conformal manifold, respectively. Given a Riemannian manifold (or a representative of a conformal manifold), the curvature tensor associated with the Levi-Civita connection $\nabla$ is denoted by $R$ and is defined according to
\begin{align} \label{Riem-curv}
R(x,y)z = (\nabla_x \nabla_y - \nabla_y \nabla_x)z - \nabla_{[x,y]} z\,,
\end{align}
where $x,y,z \in \Gamma(TM)$ are smooth vectors on $M$ and $[\cdot, \cdot]$ is the Lie bracket of vector fields.

For notational convenience, we implement Penrose's abstract index notation, where a subscript Latin index indicates a section of the cotangent bundle, a superscript Latin index indicates a section of the tangent bundle, and multiple indices indicates a section of the corresponding tensor product of the tangent and cotangent bundles, with repeated indices following the Einstein convention. To that end, we may express Equation~(\ref{Riem-curv}) as
$$x^a y^b R_{ab}{}^c{}_d z^d = x^a y^b (\nabla_a \nabla_b - \nabla_b \nabla_a) z^c\,.$$
Furthermore, the metric $g_{ab}$ has an inverse $g^{ab}$ which may be used to raise and lower indices in the usual way. Indices may be symmetrized over (with unit normalization) using round brackets, e.g. $X_{(ab)} = \tfrac{1}{2}(X_{ab} + X_{ba})$, and antisymmetrized with square brackets, e.g. $X_{[ab]} = \tfrac{1}{2}(X_{ab} - X_{ba})$. The tracefree part of a tensor can be indicated via the symbol $\circ$, so that, for example, $X_{(ab)_{\circ}} = \tfrac{1}{2}(X_{ab} + X_{ba} - \tfrac{1}{d}g_{ab} g^{cd} X_{cd})$.

Occasionally, we will use ``dot product'' notation to denote contractions using the appropriate metric; that is, when $v,u \in \Gamma(TM)$, we write $u \cdot v := g(u,v)$. This notation can generalize to a vector contracted into a tensor, so that if $t \in \Gamma(\otimes^k TM)$, then we may write $(v \cdot t)^{a_2 \cdots a_k} := g_{ab} v^a t^{b a_2 \cdots a_k}$.

For conformal geometries, there exists a useful decomposition of the Riemann curvature tensor into the Weyl curvature tensor and the Schouten tensor, given by
$$R_{abcd} = W_{abcd} + g_{ac} P_{bd} - g_{bc} P_{ad} - g_{ad} P_{bc} + g_{bd} P_{ac}\,,$$
where the Schouten tensor is defined according to
$$P_{ab} := \tfrac{1}{d-2} \left(Ric_{ab} - \frac{Sc}{2(d-1)} g_{ab} \right)\,,$$
with the Ricci curvature convention given by $Ric_{ab} = R_{ca}{}^c{}_b$ and scalar curvature $Sc = Ric_a{}^a$. Certain derivatives of the Weyl tensor appear in conformal geometry quite frequently. The Cotton tensor is defined by
$$C_{abc} := \nabla_a P_{bc} - \nabla_b P_{ac} = \tfrac{1}{d-3} \nabla^d W_{dcab}\,,$$
and the Bach tensor is given by
\begin{align} \label{bach-def}
B_{ab} := \Delta P_{ab} - \nabla^c \nabla_a P_{bc} + P^{cd} W_{acbd} = \nabla^c C_{cab} + P^{cd} W_{acbd}\,.
\end{align}
(In the above, $\Delta = g^{ab} \nabla_a \nabla_b$ is the negative Laplacian.)

\subsection{Riemannian hypersurface geometry}
For later use in this article, we record conventions and identities for hypersurface embeddings $\iota : \Sigma \hookrightarrow (M,g)$, which we always assume to be smooth embeddings into compact manifolds. Going forward, we will identity the manifold $\Sigma$ with its image under the embedding $\iota(\Sigma)$.

When $\Sigma$ is the boundary of $M$, we denote by $n \in \Gamma(TM|_{\Sigma})$ the inward pointing unit normal vector. When $\Sigma$ is merely an embedded hypersurface, $n$ denotes some unit normal vector. From this unit normal vector, we may construct the induced metric on $\Sigma$ via
$$\bar{g}_{ab} = \iota^* g = g_{ab}|_{\Sigma} - n_a n_b\,.$$
Then, for any extension of the unit normal vector to $TM$, the second fundamental form may be expressed as
$$\II_{ab} = \nabla_a^\top n_b\,,$$
where $\nabla^\top_a = \bar{g}^b_a \nabla_b$ is the projection of the Levi-Civita connection on $(M,g)$ to $\Sigma$, with the mean curvature defined according to
$$H := \tfrac{1}{d-1} \II_a^a\,.$$
The induced connection on $(\Sigma,\bar{g})$ is related to this projection according to
$$\bar{\nabla}_a v^b \= \nabla_a^\top v^b + n^b \II_{ac} v^c\,,$$
where $v \in \Gamma(T \Sigma)$ and, on the right hand side is extended arbitrarily into $TM$.

The Gauss equation relates the induced curvature to the projection of the curvature, so that
$$\bar{R}_{abcd} = R^\top_{abcd} + \II_{ac} \II_{bd} - \II_{ad} \II_{bc}\,.$$
Following from this equation, we find  the trace-free Fialkow--Gauss equation:
\begin{align} \label{fialkow-gauss}
\IIo^2_{(ab)_{\circ}} - W_{nabn} = (d-3)(P_{ab}^{\otop} - \mathring{\bar{P}} + H \IIo_{ab}) =: (d-3) \Fo_{ab}\,,
\end{align}
where $\Fo_{ab}$ is the trace-free Fialkow tensor. The Codazzi--Mainardi equation, on the other hand, takes the form
$$R_{abcn}^\top = \bar{\nabla}_a \II_{bc} - \bar{\nabla}_b \II_{ac}\,,$$
with its trace-free part taking the form
\begin{align} \label{trfr-Codazzi}
W_{abcn}^\top = \bar{\nabla}_a \IIo_{bc} - \bar{\nabla}_b \IIo_{ac} + \tfrac{1}{d-2} \left(\bar{g}_{bc} \bar{\nabla} \cdot \IIo_a - \bar{g}_{ac} \bar{\nabla} \cdot \IIo_b \right)\,.
\end{align}

\section{Conformal Geometry and Tractor Calculus} \label{sec:tractors}
Here a conformal manifold means a smooth manifold $M$ of
dimension $n\geq 3$ equipped with an equivalence class $\cc$ of Riemanian
metrics, where $g_{ab}$, $\widehat{g}_{ab} \in \cc$ means that
$\widehat{g}_{ab}=\Omega^2 g_{ab}$ for some smooth positive function
$\Omega$.

 On an $n$-manifold $M$ the top exterior power of the tangent bundle $ \Lambda^{n} TM$ is a line bundle. Thus  its square
$\mathcal{K}:=(\Lambda^{n} TM)^{\otimes 2}$  is
canonically oriented and so one can take oriented roots of it. Given
$w\in \mathbb{R}$ we set
\begin{equation} \label{cdensities}
\ce[w]:=\mathcal{K}^{\frac{w}{2n}} ,
\end{equation}
and refer to this as the bundle of conformal densities. For any vector
bundle $\cU$, we write $\cU[w]$ to denote $\cU[w]:=\cU\otimes\ce[w]$.
For example, $\ce_{(ab)}[w]$ denotes the symmetric second tensor power
of the cotangent bundle tensored with $\ce[w]$, i.e. $S^2T^* M\otimes
\ce[w]$ on $M$.

On a fixed Riemannian manifold $\cK$ is canonically
trivialised by the square of the volume form, and so $\cK$ and its
roots are not usually needed explicitly. However if we wish to change
the metric conformally, or work on a conformal structure then these
objects become essential. 
It is straightforward to show a weight $w$ conformal density $\phi$,
that is a section $\phi\in \Gamma(\ce[w])$, can be interpreted as a
double equivalence class $\phi := [g; f] = [\Omega^2 g; \Omega^w f]$
where $f \in C^\infty M$ and we will use this henceforth without further mention.

A weight $w=1$ conformal density $\tau$ will be termed a
\textit{choice of scale} as, at all points where it is non-zero, it
determines a representative of the conformal class of metrics $g_{\tau}$
via the distinguished representative $(g_{\tau} ; 1)$ in the equivalence
class $\tau = [g_{\tau} ; 1]$. When $\tau > 0$ we refer to it as a
\textit{true scale}.
On a conformal manifold there is evidently a canonical section $\bg$
of $\odot^2 T^* M[2]$ given by $\bg = [g; g] $. This is called the
conformal metric.
The metric determined by $\tau\in \Gamma(\ce[1])$ then arises as  $g_{\tau} =
\tau^{-2}\bg$. In this article, when we do explicit computations we will pick a metric and trivialize density bundles accordingly.

Given a true scale $\tau$ one has the corresponding Levi-Civita
connection $\nabla^{g_{\tau}}$. Clearly this acts naturally on
densities and, in fact, as an operator on $\Gamma(\ce[w])$, is
recovered by the formula $\nabla^{\tau} = \tau^w \circ d \circ
\tau^{-w}$. It follows that $\nabla^{g_{\tau}}$ preserves the
conformal metric $\bg$ and so we may use that to raise and lower
indices without confusion. It turns out that this simplifies many
computations, and so we will do this.

On a general conformal manifold $(M,\cc)$, there is no
distinguished connection on $TM$. But there is an invariant and
canonical connection on a closely related bundle, namely the conformal
tractor connection on the standard tractor bundle.
Here we review the basic conformal tractor calculus on
Riemannian and conformal manifolds. See
\cite{BEG,curry-G,GP} for more details.
Unless stated otherwise, calculations will be done with the use of generic $g \in
\cc$.

Given a metric
representative $g \in \cc$, the \textit{standard tractor bundle} $\mathcal{T}M$
is isomorphic to the rank $d+2$ vector bundle
$$\mathcal{T}M \stackrel{g}{\cong} \ce M[1] \oplus TM[-1] \oplus \ce M[-1]\,.$$
Sections of this bundle (and tensor products involving this bundle) are denoted using capital latin letters for abstract indices, so that for example
\begin{equation}\label{Tsplit}
  T^A \stackrel{g}{=} (\tau^+, \tau^a, \tau^-)\,.
  \end{equation}
  While the
isomorphism holds for any metric representative, the isomorphism
transforms non-trivially under conformal rescalings:
\begin{equation}\label{ct}
  T^A \stackrel{\Omega^2 g}{=} (\tau^+, \tau^a + \Upsilon^a \tau^+, \tau^- - \tau^a \Upsilon_a - \tfrac{1}{2} |\Upsilon|^2_g \tau^+)\,,
  \end{equation}
where $\Upsilon := d \log \Omega$.

The tractor bundle decomposition determined by a metric $g\in\cc$, as given in \eqref{Tsplit}, is captured by three bundle maps or
\textit{injectors}: $X^A \in \Gamma(\ct
M[1])$, $Z^A_a \in \Gamma(T^* M \otimes \ct M[1])$, and $Y^A \in
\Gamma(\ct M[-1])$ characterised by
$$T^A \stackrel{g}{=} \tau^+ Y^A + \tau^a Z^A_a + \tau^- X^A\,.$$
These have conformal transformations determined by \eqref{ct}. In particular $X^A$ is  independent of the
 choice of metric representative; we
label it the \textit{canonical tractor}. The other injectors depend on
a the choice of metric representative. Nevertheless another choice-independent
tractor may also be constructed from these injectors: the tractor
metric, expressed in a choice of metric representative $g \in \cc$:
$$h^{AB} \stackrel{g}{:=} \bg^{ab} Z^A_a Z^B_b + 2 X^{(A} Y^{B)} \in \Gamma(\odot^2 \ct M)\,.$$
As it is invertible this identifies $\ct M$ with its dual $\ct^*M$.

Additionally, one may use these injectors to construct sections of tractor bundles from sections of tensor products of the (co)tangent bundle (or the weighted equivalent). Doing so invokes differential operators known as \textit{splitting operators} (see~\cite{josef} for a complete characterization). It is sufficient for us to note that these are operators with left-inverses. One relevant example of a splitting operator takes the form
\begin{equation} \label{q-definition}
\begin{aligned}
q_{AB}^{ab} : \Gamma(\odot^2_{\circ} T^* M[w+2]) &\rightarrow \Gamma(\odot^2_{\circ} \mathcal{T} M[w]) \\
t_{ab} &\mapsto Z_A^a Z_B^b t_{ab} - \tfrac{2}{d+w} X_{(A} Z_{B)}^a \nabla^b t_{ab} \\
&\phantom{\mapsto} + \tfrac{1}{(d+w)(d+w-1)} X_A X_B (\nabla^a \nabla^b t_{ab} + (d+w) P^{ab} t_{ab})\,.
\end{aligned}
\end{equation}
Note that this operator lives in the kernel of contraction with $X$.

The
\textit{tractor connection} $\nabla^\ct$ is defined, in a choice of scale
$\tau$, by
$$\nabla_a^\ct T^B \stackrel{g}{=} (\nabla_a^{\tau} \tau^+ - \bg_{ac}
\tau^c, \nabla_a^{\tau} \tau^b + \bg_a^b \tau^- + (P^{g_{\tau}})_a^b
\tau^+, \nabla_a^{\tau} \tau^- - (P^{g_{\tau}})_{ab} \tau^b)\,.$$ It
is straightforward to check via the tractor transformation identity
that $\nabla^\ct$ is indeed a choice-independent
connection on conformal manifolds.

From its definition, one observes that, for
a choice of metric $g \in \cc$,
\begin{align*}
\nabla_b^\ct X^A =& Z^A_b \\
\nabla_b Z^A_a =& -(P^g)_{ab} X^A - \bg_{ab} Y^A \\
\nabla_b Y^A =& (P^g)_b^a Z^A_a\,.
\end{align*}
where on the left hand side we have coupled the tractor connection to
the Levi-connection of $g_\tau$. These relations are useful for
computations and, in particular, using these one can verify that the
tractor connection preserves the tractor metric
$$
\nabla^\tau_a h_{BC}=0.
$$
It follows that it sensible to use $h_{BC}$ and its inverse to lower and raise indices.

A non-trivial fact is that one may couple the tractor connection and
the Levi-Civita connection in a way to produce a tractor-valued
differential operator, called the \textit{Thomas-$D$ operator}, that
acts on any weighted tractor bundle, and is independent of the choice
of metric representative. This is defined, in a choice of scale, by
$$D^A T \stackrel{g_{\tau}}{:=} (d+2w-2) wY^A T + (d+2w-2) \bg^{ab} Z^A_a \nabla_b^{\ct} T - X^A (\Delta^\ct + w J^{g_{\tau}}) T\, ,$$
where $T$ is weight-$w$ section of a tractor bundle.

Closely linked to the tractor connection (see e.g.\ \cite{BEG}) is the
conformally invariant differential operator $D_{ab}:\Gamma(\ce[1])\to
\odot_o^2T^*M[1])$ given by
\begin{equation}\label{Eop}
D_{ab}\tau=\nabla_a\nabla_b \tau +\mathring{P}_{ab} \tau .
\end{equation}
This is sometimes called the {\em almost-Einstein operator} because non-zero
solutions to $D_{ab}\tau=0$ are scales such that $g=\tau^{-2}\bg$ is
Einstein on the open dense set (we assume $M$ connected) where $\tau$ is non-zero, and such solutions correspond to parallel standard tractors \cite{GoverNurowski,GoverAE}.

\medskip

Next, for our purposes it is useful to recall from \cite{BEG} the
notion of the normal tractor. This is a natural section of $\ct|_{\partial M}$ along the
boundary $\partial M$ of a conformal manifold $M$ with boundary. In a choice of
metric representative, this is given by
$$N^A \stackrel{g}{:=} (0, n^a, -H^g) \in \Gamma(\ct M|_{\Sigma})\,,$$
where $n$ is the inward-pointing unit normal vector to the boundary
$\Sigma$ satisfying $|n|^2_g = 1$ and $H^g$ is its mean curvature for
the representative $g$. Elsewhere, we use the same symbol $n$ to refer to any extension of $n$ off $\Sigma$ unless specified. It is easily checked that the display
transforms according to \eqref{ct}, and so $N^A$ is a conformally
invariant tractor field. 

Now, given a conformal hypersurface embedding $\Sigma \hookrightarrow (M^d,\cc)$, one may always (formally) solve the singular Yamabe problem to an order determined by the dimension of the manifold, see \cite{ACF}, and, for the way we describe it here, \cite[Theorem 4.5]{will1}:
\begin{proposition} \label{singyam}
Let $\Sigma \hookrightarrow (M^d,\cc)$ be a smooth conformal hypersurface embedding with $\sigma_0 := [g;s] \in \Gamma(\ce M[1])$, where $s$ is any defining function for $\Sigma$. Then there exists a distinguished density $\sigma \in \Gamma(\ce M[1])$ unique modulo terms of order $\mathcal{O}(\sigma^{d+1}_0)$ satisfying
$$\tfrac{1}{d^2} |D \sigma|^2 = 1 + \sigma^d \mathcal{B}_\sigma\,,$$
for some smooth density $\mathcal{B}_{\sigma} \in \Gamma(\ce M[-d])$.
\end{proposition}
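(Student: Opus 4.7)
The plan is to solve the equation $\tfrac{1}{d^2}|D\sigma|^2 = 1 + \sigma^d\mathcal{B}_\sigma$ as a formal power series in the defining function $s$ associated to $\sigma_0 = [g;s]$, uniquely determining $\sigma$ modulo $\mathcal{O}(\sigma_0^{d+1})$ and extracting $\mathcal{B}_\sigma$ as the obstruction at order $d$. First, I would translate the tractor equation into an ordinary PDE. Using the explicit formula for the Thomas-$D$ operator on a weight-$1$ density $\sigma$ and the tractor metric $h_{AB}$, a direct computation gives
$$\tfrac{1}{d^2}|D\sigma|^2 \;=\; |\nabla\sigma|^2_{\bg} \;-\; \tfrac{2}{d}\,\sigma\bigl(\Delta\sigma + J\sigma\bigr),$$
where $J := \bg^{ab}P_{ab}$. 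The right-hand side is manifestly a weight-$0$ conformal density, so the equation to be solved is conformally invariant in its own right.

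Next, I would make the ansatz $\sigma = \sigma_0 u$ with $u \in C^\infty M$ satisfying $u|_\Sigma > 0$, and expand $u = \sum_{k\geq 0} u_k s^k$ formally in $s$. Substituting and collecting like powers of $s$ produces a sequence of equations $\mathcal{E}_k(u_0,\ldots,u_k) = 0$. The $k=0$ equation is algebraic and fixes $u_0|_\Sigma$ by imposing $|\nabla\sigma|^2_{\bg}|_\Sigma = 1$. For $k \geq 1$, the equation $\mathcal{E}_k = 0$ is linear in $u_k$, with coefficient computed from the principal contributions of $|\nabla\cdot|^2$ and $\sigma\Delta\cdot$ applied to $s^{k+1}$; careful bookkeeping shows this coefficient is a non-zero multiple of $(d-k)\,u_0^2|ds|_{\bg}^2$, which is non-vanishing for $1 \leq k \leq d-1$. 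Hence each such $u_k$ is uniquely determined algebraically in terms of the lower-order $u_j$ and curvature data, and consequently $\sigma$ is unique modulo $\mathcal{O}(\sigma_0^{d+1})$. At order $k = d$ the coefficient of $u_d$ vanishes, so $u_d$ is free; the $u_d$-independent remainder at that order is precisely what defines the density $\mathcal{B}_\sigma \in \Gamma(\ce M[-d])$, whose conformal invariance is automatic from that of the equation itself.

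The main obstacle is the explicit verification that the recursion coefficient carries the factor $(d-k)$, since this single algebraic feature both drives the inductive construction below order $d$ and forces the obstruction at order $d$. An alternative and more conceptual route is to invoke directly the classical singular-Yamabe result of \cite{ACF}, which produces the same formal expansion in the guise of a conformally rescaled metric $g_\sigma = \sigma^{-2}\bg$ of constant scalar curvature to order $d$, with $\mathcal{B}_\sigma$ being the obstruction density arising in that setting; translating that construction back to the density formalism recovers the proposition without having to organize the recursion by hand.
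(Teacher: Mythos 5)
Your argument is correct and is essentially the standard one: the paper itself gives no proof of this proposition but simply cites \cite{ACF} and \cite[Theorem~4.5]{will1}, and those references establish it exactly by the recursion you describe, where perturbing $\sigma$ by a term of order $s^{k+1}$ shifts $\tfrac{1}{d^2}|D\sigma|^2$ at order $s^k$ with a coefficient proportional to $(k+1)(d-k)\,u_0|ds|^2_{\bg}$, so that $u_1,\dots,u_{d-1}$ are determined, the coefficient dies at $k=d$, and the residue there is $\mathcal{B}_\sigma$. Your identity $\tfrac{1}{d^2}|D\sigma|^2 = |\nabla\sigma|^2_{\bg} - \tfrac{2}{d}\sigma(\Delta\sigma + J\sigma)$ and the conformal invariance of the resulting equation are also as in those sources, so the proposal matches the intended (cited) proof.
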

We call the density $\sigma$, distinguished by the above proposition,
the \textit{singular Yamabe density}, the metric it determines
$g^o:=g/s^2$ the \textit{singular Yamabe metric}, and
$\mathcal{B}_{\sigma}|_{\Sigma} \in \Gamma(\ce \Sigma[-d])$ the
\textit{obstruction density}; note that the obstruction density is
independent of the choice of $\sigma_0$ and thus is an invariant of
the embedding.

From this result, and the conformal invariance of the almost-Einstein operator \eqref{Eop}, it follows immediately 
that associated to any conformal hypersurface embedding one may
construct a canonical \textit{almost-Einstein tensor} $E_{ab}$, unique to order
$\sigma^{d-1}$:
\begin{proposition}\label{Edef}
Let $\Sigma \hookrightarrow (M^d,\cc)$ be a smooth conformal
hypersurface embedding with singular Yamabe density $\sigma := [g; s]
\in \Gamma(\ce M[1])$. Then
$$E_{ab} := D_{ab}\sigma $$
is uniquely determined modulo terms of order $\mathcal{O}(\sigma^{d-1})$.
\end{proposition}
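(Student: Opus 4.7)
The plan is to leverage Proposition~\ref{singyam} together with the linearity of the almost-Einstein operator $D_{ab}$. Since the singular Yamabe density $\sigma$ is determined only modulo $\mathcal{O}(\sigma^{d+1})$, establishing that $E_{ab}=D_{ab}\sigma$ is uniquely determined modulo $\mathcal{O}(\sigma^{d-1})$ amounts to checking that altering $\sigma$ by any term of order $\sigma^{d+1}$ changes $E_{ab}$ by at most $\mathcal{O}(\sigma^{d-1})$.

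Concretely, I would let $\sigma, \sigma' \in \Gamma(\ce[1])$ be two singular Yamabe densities for the embedding, apply Proposition~\ref{singyam} to write $\sigma' = \sigma + \sigma^{d+1}\phi$ for some smooth $\phi \in \Gamma(\ce[-d])$, and use linearity to reduce the problem to estimating
\[
E_{ab}' - E_{ab} \;=\; D_{ab}(\sigma^{d+1}\phi) \;=\; \nabla_a\nabla_b(\sigma^{d+1}\phi) + \mathring{P}_{ab}\,\sigma^{d+1}\phi.
\]
The Schouten contribution is manifestly $\mathcal{O}(\sigma^{d+1})$, hence $\mathcal{O}(\sigma^{d-1})$. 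For the second-derivative term, I would invoke the Leibniz rule: each $\nabla$ that lands on the factor $\sigma^{d+1}$ lowers the exponent of $\sigma$ by one, so after two derivatives the leading-order term is $d(d+1)\,\sigma^{d-1}(\nabla_a\sigma)(\nabla_b\sigma)\phi$, and all remaining terms carry an additional factor of $\sigma$. This gives the bound.

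There is no substantive obstacle here; the result is a bookkeeping consequence of the fact that $D_{ab}$ is of order two and $\sigma$ vanishes simply along $\Sigma$, so two derivatives cost exactly two powers of the defining density. The only mildly delicate point is confirming that the notion of $\mathcal{O}(\sigma^{d-1})$ is intrinsic to the embedding rather than tied to a particular representative of $\sigma$: this follows because any two singular Yamabe densities differ multiplicatively by a smooth, nowhere-vanishing factor near $\Sigma$, so the two resulting order filtrations coincide. Hence $E_{ab}$ descends to a well-defined element of $\Gamma(\odot^2_\circ T^*M[1])$ modulo $\mathcal{O}(\sigma^{d-1})$, as claimed.
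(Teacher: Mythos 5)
Your proposal is correct and follows the same route the paper takes (the paper simply asserts that the claim ``follows immediately'' from Proposition~\ref{singyam} and the fact that $D_{ab}$ is a second-order operator, which is exactly the bookkeeping you carry out). Writing $\sigma'=\sigma+\sigma^{d+1}\phi$ and noting that each of the two derivatives in $D_{ab}$ can lower the power of $\sigma$ by at most one is precisely the intended argument.
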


The above results are  local. But from the works of~\cite{Loewner,aviles1985,ACF} (see the discussion in the introduction of~\cite{Graham2016} for more details), there is a stronger, global result:
\begin{proposition} \label{unique-singyam}
On a compact conformal manifold with boundary, there exists a unique
solution $\sigma$ to the singular Yamabe problem. Furthermore, if the
obstruction density vanishes, then that solution is smooth.
\end{proposition}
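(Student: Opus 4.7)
This proposition is a classical result and my plan is to invoke directly the proofs from the cited works, which I sketch here in our notation. The strategy naturally splits into existence/uniqueness via PDE theory on one side, and smoothness via formal asymptotic analysis on the other.

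For existence and uniqueness, I would fix a metric representative $g\in\cc$ with $\sigma = [g;u]$, and observe that, after unpacking the Thomas-$D$ operator using $D^A\sigma = (d u, d\nabla^a u, -(\Delta+J)u)$ and $h_{AB}$, the equation $\tfrac{1}{d^2}|D\sigma|^2 = 1$ is equivalent to the semilinear elliptic PDE $d|du|_g^2 - 2u\Delta u - 2Ju^2 = d$ on the interior, with the boundary constraint $u \to 0$ and $|du|_g \to 1$ at $\partial M$. This amounts to requiring that $g^+ := u^{-2}g$ is a conformally compact metric of constant scalar curvature $-d(d-1)$. The existence of a positive solution with this singular boundary behaviour is due to Aviles--McOwen \cite{aviles1985} in the general compact case (building on the ball case of Loewner--Nirenberg \cite{Loewner}); their argument constructs explicit sub- and supersolutions adapted to the singular asymptotics near $\partial M$ and then iterates monotonically, the favourable sign of the nonlinearity ensuring convergence. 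For uniqueness, given two positive solutions $u_1,u_2$, the boundary asymptotics $u_i/s \to 1$ (for any defining function $s$) combined with a maximum-principle comparison applied to $u_1/u_2$ forces $u_1 = u_2$ on the interior.

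For the smoothness claim under $\mathcal B_\sigma|_{\partial M} = 0$, I would analyse the formal expansion of $\sigma$ in a collar neighbourhood $[0,\epsilon)_s\times\partial M$. Writing $\sigma = s + \sum_{k\geq 2}\sigma_k s^k$ and substituting into $\tfrac{1}{d^2}|D\sigma|^2 = 1$ produces recursion relations that uniquely determine $\sigma_k$ for $k \leq d$ in terms of the ambient and extrinsic geometry of the embedding $\partial M \hookrightarrow (M,\cc)$. At the next order the defect in the recursion is precisely the obstruction $\mathcal B_\sigma|_{\partial M}$; generically one is forced to introduce a term of the form $s^{d+1}\log s$, but when $\mathcal B_\sigma|_{\partial M} = 0$ the expansion may be continued to all orders as a pure power series with no log terms. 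Borel summation yields a smooth function matching this formal series to infinite order, and the results of Andersson--Chru\'sciel--Friedrich \cite{ACF} promote this to an actual smooth solution of the equation; by the uniqueness proved above, it must coincide with the Aviles--McOwen solution.

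The principal obstacle is boundary regularity. Interior $C^\infty$ smoothness is immediate from standard elliptic theory, but the equation degenerates at $\partial M$, so extending smoothness across the boundary is delicate. This is exactly where the vanishing of $\mathcal B_\sigma|_{\partial M}$ is essential---without it, the best possible regularity is polyhomogeneous with leading logarithmic term at order $s^{d+1}\log s$. The method of \cite{ACF} handles this via a change of boundary defining function that recasts the degenerate problem as a regular elliptic problem to which standard boundary-regularity estimates apply, and it is this step that I expect to require the most care to implement cleanly.
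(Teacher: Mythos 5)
Your proposal is correct and matches the paper's treatment: the paper offers no proof of this proposition, simply citing Loewner--Nirenberg, Aviles--McOwen, and Andersson--Chru\'sciel--Friedrich (via the discussion in Graham's work) for exactly the existence/uniqueness and polyhomogeneity/smoothness statements you sketch. Your outline of those references---the reformulation as the constant-scalar-curvature $-d(d-1)$ problem, sub/supersolutions plus maximum-principle uniqueness, and the obstruction density as the coefficient of the first $s^{d+1}\log s$ term---is an accurate account of what the cited works establish.
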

\begin{remark}
For convenience, we will refer to a singular Yamabe metric as smooth when the singular Yamabe density it arises from is smooth.
\end{remark}

As a corollary, when the obstruction density vanishes, the almost Einstein tensor $E$ above is determined uniquely by the conformal embedding.
\begin{corollary} \label{unique-E}
Let $(M^d,\cc)$ be a compact conformal manifold with smooth boundary embedding $\Sigma \hookrightarrow (M^d,\cc)$ and vanishing obstruction density. Then, $E \in \Gamma(\odot^2_{\circ} T^* M[1])$ is smooth up to the boundary and is uniquely determined by the boundary embedding and the conformal structure $(M,\cc)$.
\end{corollary}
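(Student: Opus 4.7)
The plan is to deduce the corollary as a clean consequence of Propositions~\ref{unique-singyam} and~\ref{Edef}; the analytic heavy lifting has already been done by the global existence/regularity theorem for the singular Yamabe problem cited in Proposition~\ref{unique-singyam}.

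First, I would invoke Proposition~\ref{unique-singyam}: compactness of $M$ together with the vanishing of the obstruction density $\mathcal{B}_\sigma|_\Sigma$ produces a \emph{unique} singular Yamabe density $\sigma \in \Gamma(\ce M[1])$ that is smooth up to the boundary. Because $\sigma$ is canonically associated to the embedding $\Sigma \hookrightarrow (M,\cc)$ (no initial data or gauge choice is needed beyond the conformal structure and the boundary), anything built functorially from $\sigma$ will inherit this canonical nature.

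Next, I would set $E_{ab} := D_{ab}\sigma$ as in Proposition~\ref{Edef}. The operator $D_{ab}$ of \eqref{Eop} is a conformally invariant second-order linear differential operator whose coefficients in any choice of scale are built polynomially from the Schouten tensor and the conformal metric. Smoothness of $\sigma$ up to the boundary, together with smoothness of these coefficients, immediately yields that $E_{ab}$ is smooth up to the boundary. Conformal invariance of $D_{ab}$ ensures that $E \in \Gamma(\odot^2_\circ T^* M[1])$ is independent of the metric representative used to compute it, and in particular lands in the trace-free symmetric weight-$1$ bundle as claimed.

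Finally, I would address the $\mathcal{O}(\sigma^{d-1})$ ambiguity recorded in Proposition~\ref{Edef}: this reflects the $\mathcal{O}(\sigma^{d+1})$ ambiguity in \emph{formal} solutions to the singular Yamabe problem (losing two orders under the second-order operator $D_{ab}$). Here $\sigma$ is not merely a formal solution but the globally unique smooth solution given by Proposition~\ref{unique-singyam}, so this ambiguity is absent and $E_{ab}$ is uniquely determined by $(\Sigma \hookrightarrow (M,\cc))$. There is no substantive obstacle: the corollary is a tidy repackaging of the earlier results, with the only nontrivial input being the global uniqueness and regularity statement imported from the analytic singular Yamabe literature via Proposition~\ref{unique-singyam}.
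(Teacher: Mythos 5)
Your proposal is correct and follows essentially the same route as the paper: both deduce the result from the global uniqueness and smoothness of the singular Yamabe density in Proposition~\ref{unique-singyam} together with the fact that $E$ is the image of $\sigma$ under the conformally invariant operator $D_{ab}$. Your extra remark clarifying why the $\mathcal{O}(\sigma^{d-1})$ ambiguity of Proposition~\ref{Edef} disappears is a helpful elaboration but does not change the argument.
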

\begin{proof}
From Proposition~\ref{unique-singyam}, we have that there exists a
unique smooth singular Yamabe density $\sigma$. But $E$ is defined by
a conformally invariant differential operator on $\sigma$, and hence
as $\sigma$ only depends on the boundary embedding and the conformal
structure, so does $E$. Furthermore, as $\sigma$ is smooth up to the
boundary (and the underlying conformal structure is also), so is $E$.


\end{proof}

\section{Fefferman--Graham Obstruction Tensors and the Main Results} \label{theorem-proof}

In~\cite{FG,FGbook}, it is shown that on each even-dimensional conformal manifold $(M^d,\cc)$, the so-called \textit{Fefferman--Graham} obstruction tensor $\mathcal{O}^{(d)}$ arises as a distinguished, divergence-free section of $\odot^2_\circ T^* M[2-d]$. These tensors get their name from the fact that they appear as the smoothness obstruction to a formal solution of the Poincar\'e--Einstein equation on a $(d+1)$ dimensional manifold. More specifically, given a conformal manifold embedded as the boundary of a smooth manifold  $(\Sigma,\bar{\cc}) \hookrightarrow M^d$ (with $d$ odd), the existence of a formal extension $\cc$ of $\bar{\cc}$ to $M$ containing a Poincar\'e--Einstein metric as a reprensentative is obstructed by the Fefferman--Graham obstruction tensor of $(\Sigma,\bar{\cc})$. These tensors are of special interest as, aside from the Weyl tensor, they are the only conformally-invariant curvature tensors that are linear in the curvature at leading order~\cite{FG}. For our purposes, this fact is critical.

The Fefferman--Graham obstruction tensor also arises variationally from a conformally invariant energy functional
\begin{align} \label{Q-integral}
\int_{M^d} Q_d\, dV
\end{align}
where $Q_d$ is Branson's $Q$-curvature in $d$ dimensions~\cite{GrahamHirachi}. While $Q$-curvatures are not conformally-invariant themselves, they transform invariantly with weight $-d$ modulo a divergence---hence, on a closed manifold, Display~(\ref{Q-integral}) is an integrated conformal invariant. However, for our purposes, we require an integral whose integrand is pointwise conformally-invariant and whose variation is the Fefferman--Graham obstruction tensor. That such a pointwise-invariant exists is captured in the following two propositions.

\begin{proposition} \label{d-invariant}
Let $(M^d,\cc)$ be an even-dimensional conformal manifold. Then, the ambient scalar invariant $L_d := |\tilde{\nabla}^{\frac{d-4}{2}} \tilde{R}|^2 \in \Gamma(\ce M[-d])$ is the unique (modulo terms cubic or higher order in curvature) natural conformal invariant that is quadratic in curvature at this weight.
\end{proposition}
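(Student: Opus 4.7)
The strategy is to parametrise natural scalar conformal invariants on an even-dimensional conformal manifold via the Fefferman--Graham ambient metric construction. Every natural scalar conformal invariant of weight $-d$ arises, up to the ambient ambiguity at order $\rho^{d/2}$, as the pullback of a complete contraction of the ambient curvature $\tilde{R}$ and its covariant derivatives $\tilde{\nabla}$ against the ambient inverse metric. Requiring the invariant to be quadratic in $\tilde{R}$ and of weight $-d$ forces it, by counting indices and ambient homogeneity, to take the schematic form of a complete contraction of $\tilde{\nabla}^{k_1}\tilde{R}\otimes\tilde{\nabla}^{k_2}\tilde{R}$ with $k_1+k_2=d-4$. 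Up to the Riemann symmetries this yields finitely many a priori candidates, indexed by the split $(k_1,k_2)$ and the pattern of index-pairings between the two factors.

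Two inputs then collapse these candidates modulo cubic curvature. First, the Fefferman--Graham ambient metric is Ricci-flat to order $d/2-1$ in the defining direction, which is precisely the order to which a weight-$(-d)$ invariant is sensitive; hence any contraction producing a Ricci trace of one of the $\tilde{R}$ factors contributes only at cubic-or-higher order in the underlying conformal curvature and may be discarded. Likewise the contracted differential Bianchi identity $\tilde{\nabla}^a\tilde{R}_{abcd}\equiv 0\pmod{\widetilde{\mathrm{Ric}}}$ eliminates, up to cubic error, any pattern in which a derivative index pairs with a curvature index on the same $\tilde{R}$. Second, a commutator $[\tilde{\nabla}_a,\tilde{\nabla}_b]$ acting on a factor that already contains a $\tilde{R}$ yields a term of the form $\tilde{R}\star\tilde{R}$, which combined with the remaining curvature factor is cubic; thus all covariant derivatives may be freely reordered modulo cubic terms. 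A Leibniz-type transfer
\[
(\tilde{\nabla}^{k_1}\tilde{R})\cdot(\tilde{\nabla}^{k_2}\tilde{R}) = (\tilde{\nabla}^{k_1-1}\tilde{R})\cdot(\tilde{\nabla}^{k_2+1}\tilde{R}) + (\text{derivative-into-curvature terms}) + (\text{commutator terms})
\]
then shifts derivatives between the two factors at the cost only of Bianchi-vanishing or cubic remainders. Iterating these manipulations brings every surviving candidate to $(k_1,k_2)=((d-4)/2,(d-4)/2)$ with indices paired straight across, i.e.\ to a scalar multiple of $L_d=|\tilde{\nabla}^{(d-4)/2}\tilde{R}|^2$.

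The main obstacle I anticipate is the combinatorial bookkeeping. The number of a priori distinct contraction patterns grows with $d$, and confirming that each collapses to a multiple of $L_d$ modulo cubic terms calls for an inductive argument---on the total derivative count together with a measure of how ``non-standard'' the index pairing is---that systematically invokes the Riemann symmetries, the algebraic and differential Bianchi identities, ambient Ricci-flatness to the required order, and derivative commutation. A subsidiary point to check is that the $\rho^{d/2}$ ambiguity in the ambient metric does not affect $L_d$: this follows because $L_d$ depends on only $d-4$ derivatives of $\tilde{R}$, which for $d\geq 4$ involves the ambient metric strictly below the order of ambiguity. Once the induction closes and this consistency is verified, the only remaining freedom is a single overall scalar, which establishes the stated uniqueness.
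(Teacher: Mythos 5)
Your overall architecture (parametrise weight $-d$ invariants by complete contractions of $\tilde{\nabla}^{k_1}\tilde{R}\otimes\tilde{\nabla}^{k_2}\tilde{R}$ in the ambient metric, then collapse the candidates using Ricci-flatness, Bianchi identities and commutators) is genuinely different from the paper's. The paper does \emph{not} carry out the classification in the critical dimension $d$: it first observes that the formula $|\tilde{\nabla}^{(d-4)/2}\tilde{R}|^2$ defines, for every even $d'>d$, the unique quadratic invariant of that form (there the ambient metric is Ricci-flat and unambiguous to more than the required order, so the representation-theoretic classification is clean), and then descends to $d'=d$ by arguing that two formulas agreeing in all dimensions $d'>d$ but disagreeing in $d$ would require a curvature identity valid in all large dimensions but failing in $d$, which cannot exist. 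Exhaustiveness (that every invariant of this weight arises ambiently) is quoted from Fefferman--Graham in both arguments, so you are on equal footing there.

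The difficulty is that your direct route runs into exactly the borderline phenomena that the paper's dimensional descent is designed to avoid, and two of your steps are not sound as stated. First, the ``Leibniz-type transfer'' is not a pointwise identity modulo cubic terms: moving the outer derivative from one factor to the other produces a total ambient divergence $\tilde{\nabla}^a(\cdots)_a$, which is a perfectly nontrivial scalar and is not absorbed by Bianchi or commutator remainders. (This step is fortunately unnecessary: once self-contractions within a single factor are excluded, every index of one factor pairs with an index of the other, and counting indices forces $k_1+4=k_2+4$, i.e.\ the balanced split.) Second, and more seriously, the claim that Ricci traces contribute only at cubic order fails precisely at the order you yourself identify: the ambient Ricci tensor vanishes only to order $\rho^{d/2-1}$, a weight $-d$ invariant is sensitive up to and including that order, and the leading coefficient of $\widetilde{\operatorname{Ric}}$ there is essentially the obstruction tensor, which is \emph{linear} in curvature. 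Hence for unbalanced splits a Ricci trace can produce terms quadratic (not cubic) in curvature, and ruling these out requires additional work. Relatedly, $L_d$ does not involve the ambient metric ``strictly below'' the order of ambiguity: $\tilde{\nabla}^{(d-4)/2}\tilde{R}$ at $\rho=0$ reaches exactly the ambiguous Taylor coefficient $g^{(d/2)}$, and the well-definedness of $L_d$ is the nontrivial content of Fefferman--Graham's Proposition 9.1 rather than a free consequence of derivative counting. Either repair these borderline cases directly, or adopt the paper's device of classifying in dimensions $d'>d$ and transferring the uniqueness down.
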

\begin{proof}
From the construction provided by Fefferman and Graham~\cite[Proposition 9.1]{FGbook}, the ambient invariant $L_d := |\tilde{\nabla}^{\frac{d-4}{2}} \tilde{R}|^2$ is a scalar conformal invariant. Furthermore, it is straightforward to check that it is quadratic at leading order.
It thus remains to show that this conformal invariant is unique.

To do so, we note from the construction provided by Fefferman and Graham, that, for a fixed $d$, there exists a dimension-parametrized family of formulas for conformal invariants $\{L_d^{d'}\}_{d'}$ where for each even $d' > d$, the formula is given by
$$L_d^{d'} := |\tilde{\nabla}^{\frac{d-4}{2}} \tilde{R} |^2\,.$$
Now, because the ambient curvature has vanishing Laplacian and divergence in sufficiently high dimension and a vanishing Ricci curvature to sufficiently high order, it follows that the ambient tensor $\tilde{\nabla}^{\frac{d-4}{2}} \tilde{R}$ is trace-free modulo lower-order curvature terms. Furthermore, from the Bianchi identity and the Ricci identity, it follows that we may not antisymmetrize over more than two indices without this expression being expressible in terms of lower-order invariants. It follows from straightforward representation theory that $L_d^{d'}$ is a formula for a unique (modulo terms cubic or higher order in the curvature) scalar conformal invariant for all $d' > d$.

Now for contradiction suppose that there are two distinct invariants (at quadratic order in curvature) of the same form when $d' =d$. Then, those two invariants would be realized as formulas that produce two distinct families of conformal invariants all even dimensions $d' > d$. However, from the uniquness argument above in $d' > d$, these families of invariants must agree at quadratic order in curvature. This then implies that there exist at least one family of curvature identities that hold in all dimensions $d' > d$ that does not hold in $d$ dimensions. But no such curvature identities exist---a contradiction. This establishes that the invariant $L_d \in \Gamma(\ce M[-d])$ is the unique conformal invariant (of this weight that is quadratic at leading order) constructible from the ambient metric. However, from~\cite[Theorem 9.4]{FGbook}, all natural conformal invariants at this weight are constructible from the ambient metric. So uniqueness follows.
\end{proof}

Low-lying invariants of this form are known explicitly. In four dimensions, we have the (clearly) invariant density $L_4 := |W|^2 \in \Gamma(\ce M[-4])$. In six dimensions, Fefferman and Graham presented another invariant,
$$L_6 := |V|^2 + 16 W^{abcd} (\nabla_a C_{dcb} - P_a^e W_{ebcd}) + 16 C_{abc} C^{abc}\,,$$
where $V_{abcde}$ has leading term $\nabla_a W_{bcde}$ and subleading terms that are pure trace. 

Now we show that it is precisely this family of pointwise conformal invariants whose integrals vary into the Fefferman-Graham obstruction tensors.
\begin{proposition}\label{obstruction-variation}
Let $(M^d,g)$ with $d$ even be a manifold without boundary. Then, there exists at least one pointwise conformal invariant $\mathcal{L}_d \in \Gamma(\ce M[-d])$ that is quadratic in curvature at leading order such that
$$\delta \int_M \mathcal{L}_d dV = \int_M (\delta g)^{ab} \mathcal{O}^{(d)}_{ab} dV_g\,,$$
where $\mathcal{O}^{(d)}$ is the Fefferman--Graham obstruction tensor in $d$ dimensions. Furthermore, all such pointwise conformal invariants are proportional to $L_d$ modulo terms of cubic or higher order in curvature, and their functional $L^2$-gradient are the Fefferman--Graham obstruction tensor modulo terms quadratic in curvature.
\end{proposition}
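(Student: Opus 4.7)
The approach is to take as candidate the invariant $L_d$ from Proposition~\ref{d-invariant}, suitably rescaled, and to identify the functional gradient of its integral with the Fefferman--Graham obstruction tensor by combining symmetry constraints with a uniqueness argument.

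First I would observe that $I[g] := \int_M L_d \, dV_g$ is conformally invariant, since $L_d$ is pointwise conformally invariant of weight $-d$ and $M$ is closed. Writing
\begin{equation*}
\delta I = \int_M (\delta g)^{ab} \, T_{ab} \, dV_g
\end{equation*}
defines a natural symmetric tensor $T_{ab} \in \Gamma(\odot^2 T^* M [2-d])$. Choosing $\delta g_{ab} = 2\omega g_{ab}$ and invoking conformal invariance forces $g^{ab} T_{ab} = 0$; choosing $\delta g_{ab} = \mathcal{L}_X g_{ab}$ and integrating by parts forces $\nabla^a T_{ab} = 0$. The quadratic-in-curvature structure of $L_d$, with at most $(d-4)/2$ covariant derivatives on each curvature factor, ensures that $T$ is linear in curvature at leading order and contains at most $d-2$ covariant derivatives of curvature.

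The main step is to identify $T_{ab}$ with a multiple of $\mathcal{O}^{(d)}_{ab}$ modulo quadratic-in-curvature corrections. The argument follows the pattern of Proposition~\ref{d-invariant}: viewing $T$ through the ambient metric construction, any natural symmetric, trace-free, divergence-free conformal tensor of weight $2-d$ that is linear in curvature at leading order reduces, via Bianchi and Ricci identities, to a single representation-theoretic building block. The Fefferman--Graham construction identifies this block with the obstruction tensor $\mathcal{O}^{(d)}$. Hence $T_{ab} = c\,\mathcal{O}^{(d)}_{ab} + (\text{quadratic in curvature})$ for some constant $c$.

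The delicate part is ruling out $c = 0$, and this is the step I expect to be the main obstacle. To handle it I would appeal to Branson's $Q$-curvature: it is a known result that $\int_M Q_d\, dV_g$ is conformally invariant with functional gradient equal to a nonzero multiple of $\mathcal{O}^{(d)}$, and by Proposition~\ref{d-invariant} the quadratic-in-curvature portion of $Q_d$ must agree, modulo a total divergence and terms of cubic or higher order in curvature, with a nonzero scalar multiple of $L_d$. Transporting the nonvanishing variation of $\int_M Q_d\, dV_g$ through this identification yields $c \neq 0$, and rescaling $L_d$ by $1/c$ produces the required $\mathcal{L}_d$. The uniqueness statement is then immediate from Proposition~\ref{d-invariant}: any pointwise conformal invariant of weight $-d$ that is quadratic in curvature at leading order is proportional to $L_d$, hence to $\mathcal{L}_d$, modulo cubic-in-curvature corrections, and such corrections contribute only quadratic-in-curvature terms to the functional $L^2$-gradient.
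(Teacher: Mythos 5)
Your overall architecture is reasonable and partly complementary to the paper's: you start from $L_d$, derive trace-freeness and divergence-freeness of its functional gradient $T_{ab}$, and invoke the Fefferman--Graham uniqueness of conformally invariant tensors linear in curvature at leading order to conclude $T_{ab} = c\,\mathcal{O}^{(d)}_{ab} + (\text{quadratic})$. That much is sound. The paper instead runs the argument in the opposite direction: it starts from the Graham--Hirachi fact that $\int_M Q_d\,dV$ varies exactly into $\mathcal{O}^{(d)}$, and then \emph{extracts} a pointwise conformal invariant from $Q_d$.

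The gap is precisely in your step ruling out $c=0$. You assert that ``by Proposition~\ref{d-invariant} the quadratic-in-curvature portion of $Q_d$ must agree, modulo a total divergence and terms of cubic or higher order in curvature, with a nonzero scalar multiple of $L_d$.'' Proposition~\ref{d-invariant} classifies \emph{pointwise conformal invariants} of weight $-d$; the $Q$-curvature is not one (it transforms with a $P_d\omega$ correction), so the proposition says nothing about its quadratic part, and there is no a priori reason that this quadratic part is, modulo divergences, conformally invariant. The missing ingredient is exactly what the paper supplies: the Alexakis (Deser--Schwimmer) decomposition of a global conformal invariant as a Pfaffian plus a pointwise conformal invariant plus a divergence. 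Only after invoking that theorem can one say the relevant piece of $Q_d$ is a pointwise conformal invariant, whence Proposition~\ref{d-invariant} applies and identifies it with a multiple of $L_d$. Even then, the nonvanishing of that multiple is not automatic and must be argued as the paper does: the Pfaffian and divergence pieces contribute nothing to the variation on a closed manifold, and cubic-or-higher invariants vary into tensors that are at least quadratic in curvature, whereas $\mathcal{O}^{(d)}$ is linear at leading order --- so the quadratic piece cannot be absent. Once you add the Alexakis decomposition and this order-counting argument, your proof closes, but at that point it has essentially become the paper's proof with the roles of $L_d$ and $Q_d$ interchanged.
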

\begin{proof}
From~\cite{GrahamHirachi}, there exists a natural scalar invariant called the Branson $Q$-curvature sastisfying
$$\delta \int_M Q^g dV_g = \int_M h^{ab} \mathcal{O}^{(d)}_{ab} dV_g\,.$$
Furthermore, $\int_M Q^g dV_g$ is a global conformal invariant. Now from~\cite{Alexakis} it thus follows that the $Q$-curvature is decomposable as a sum of a Pfaffian, a pointwise conformal invariant, and a divergence. As the Pfaffian term is topological, it cannot contribute to the metric variation. Furthermore, divergences can be omitted because the manifold is closed.
Thus, there exists a pointwise conformal invariant that varies into the Fefferman--Graham obstruction tensor. By fiat, we identify an arbitrary such pointwise conformal invariant with $\mathcal{L}_d$.

Now, at leading order it is known that $\mathcal{O}^{(d)} \propto \Delta^{\frac{d-4}{2}} B$.
But because $\mathcal{O}^{(d)}$ is linear at leading order in curvature, it can only arise variationally from curvature invariants that are at most quadratic (at leading order) in curvature. However, because the only conformal invariants that are linear in curvature are the Weyl curvature and the obstruction tensor~\cite{FG}, there are no conformally-invariant scalars that are linear in curvature. Hence, it follows that $\mathcal{O}^{(d)}$ arises variationally from a conformal invariant that is quadratic in curvature at leading order. Thus $\mathcal{L}_d$ is as required by the proposition. Furthermore, by the uniqueness provided in Proposition~\ref{d-invariant}, all possible choices of $\mathcal{L}_d$ must be proportional to $L_d$ modulo terms that are at least cubic in curvature.
\end{proof}

That such pointwise conformal-invariants exist will be instrumental in constructing $\FF{d}$. However, before we may proceed with the proof of Theorem~\ref{main}, we first need to provide some technical results regarding the existence of certain conformally-invariant normal derivative operators.

To construct the required normal derivative operators, we reproduce the following propositions.
\begin{proposition}[Proposition 5.8 of~\cite{GPt}] \label{lower-normal-derivs}
Let $k \in \mathbb{Z}_+$ be given. Then there exists a family of conformally invariant operators $\delta_K : \Gamma(\ct^\Phi M[w]) \rightarrow \Gamma(\ct^{\Phi}[w-K])|_{\Sigma}$ defined by
$$\delta_K := N^{A_2} \cdots N^{A_k} \delta_R D_{A_2} \cdots D_{A_k}\,,$$
with transverse order $K$ so long as
$$w \not \in \left\{ \tfrac{2K-1-d}{2}, \tfrac{2K-2-d}{2},\cdots, \tfrac{K+1-d}{2} \right\}\,.$$
Here, $\delta_R : \Gamma(\ct^\Phi M[w]) \rightarrow \Gamma(\ct^{\Phi} M[w-1])|_{\Sigma}$ and is defined by $\delta_R := \nabla_{n} - w H$.
\end{proposition}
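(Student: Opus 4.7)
The plan is to verify conformal invariance of $\delta_K$ ingredient-by-ingredient and then extract its leading transverse-order coefficient, showing that the stated weight condition is precisely when that coefficient is non-zero. Conformal invariance is straightforward: each Thomas-$D$ operator $D_A$ is a conformally invariant differential operator on weighted tractor bundles; the normal tractor $N^A$ is a conformally invariant section of $\ct M$ along $\Sigma$; and, under $\hat g = \Omega^2 g$, the transformations $\hat n = \Omega^{-1} n$ and $\hat H = \Omega^{-1}(H + n(\log\Omega))$ combine with the density scaling to make $\delta_R = \nabla_n - wH$, viewed as a map to weight $w-1$ sections along $\Sigma$, conformally invariant. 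The composition $\delta_K$ is then conformally invariant as claimed.

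For the transverse-order analysis, work in a representative with $|ds|^2 = 1$ along $\Sigma$ and use $N^A = n^a Z^A_a - H X^A$. Using the formula for $D_A$ together with the contractions $N^A Y_A = -H$, $N^A Z_A^a = n^a$, and $N^A X_A = 0$, a direct computation yields, on any weight-$w$ section $T$,
$$N^A D_A T \;=\; (d+2w-2)(\nabla_n T - wHT) \;=\; (d+2w-2)\,\delta_R T,$$
since $N^A$ annihilates the $\Delta^\ct$-containing $X$-slot of $D_A$. Thus each $N\cdot D$ pair contributes at most one transverse derivative, and together with $\delta_R$ this gives the upper bound $K$ on the transverse order. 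To see sharpness, one extracts the leading coefficient of $\nabla_n^K T$ by induction, writing $\delta_K T = N^{A_K}\bigl[\delta_{K-1}(D_{A_K}T)\bigr]$. When $D_{A_K}$ acts on a tractor-valued field, its $X$-slot---though killed by a direct $N$-contraction at the scalar level---contributes via the connection identities $\nabla_b X_A = Z_{Ab}$ and $\nabla_b Z_A^c = -P_{ab}X_A - \bar g_{ab}Y_A$, producing a corrected leading coefficient. For $K=2$ this computation gives leading coefficient $(d+2w-3)$, vanishing exactly at $w = (3-d)/2$, which matches the singleton exceptional set of the proposition when $K=2$. In general the leading transverse-order coefficient takes the form
$$C(w,K) \;=\; \prod_{j=K+1}^{2K-1}(d+2w-j),$$
which vanishes precisely for $w\in\{(K+1-d)/2,\ldots,(2K-1-d)/2\}$; outside this set $\delta_K$ has transverse order exactly $K$.

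The principal obstacle is isolating the leading $\delta_R^K$ term through the nested Thomas-$D$ composition while tracking the shifts produced by the $X$-slot. One must carefully account for how commutators of $\nabla_n$ with $X_A$ and $Z_A^a$ redistribute normal-derivative content across tractor slots before the final $N$-contractions, and then check that all subleading corrections involve strictly fewer normal derivatives and therefore cannot resurrect a vanished leading coefficient from lower transverse order. An inductive bookkeeping argument on $K$, or equivalently viewing the whole composition as a polynomial in commutators, then delivers the exact product formula for $C(w,K)$ and hence the sharp exceptional set.
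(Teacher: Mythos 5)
This proposition is not proved in the paper at all: it is imported verbatim from Proposition~5.8 of the cited reference \cite{GPt}, so there is no in-paper argument to compare against. Judged on its own terms, your proposal follows the strategy that the cited reference actually uses: establish conformal invariance slotwise, compute $N^A D_A T = (d+2w-2)\hh\delta_R T$ from the contractions $N^AX_A=0$, $N^AY_A=-H$, $N^AZ_A^a=n^a$, and then track how the $X$-slot of the inner Thomas-$D$ operators feeds back into the leading transverse symbol under differentiation. Your single-contraction identity is correct, and your $K=2$ computation (leading coefficient $d+2w-3$, vanishing exactly at $w=(3-d)/2$, matching the singleton exceptional set) is also correct.

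There are, however, two genuine gaps. First, the upper bound argument as stated is wrong: ``each $N\cdot D$ pair contributes at most one transverse derivative'' is contradicted by your own $K=2$ computation, in which the $\Delta^\ct$ sitting in the $X$-slot of the inner $D$ contributes a $\nabla_n^2$ term (with coefficient $-1$) after $\nabla_n X_A = Z_{An}$ is contracted with $N^A$. The operator $\delta_R D_{A_2}\cdots D_{A_k}$ has total order $2K-1$, and the claim that the $N$-contractions cut the transverse order down to $K$ is precisely what must be proved; it does not follow from a slot-by-slot derivative count but from showing that every term of transverse order exceeding $K$ remains attached to an undifferentiated $X_A$ and is therefore annihilated. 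Second, and more seriously, the product formula $C(w,K)=\prod_{j=K+1}^{2K-1}(d+2w-j)$ --- which is the entire content of the proposition, since the exceptional set is exactly its zero locus --- is asserted rather than derived. You verify it only for $K=2$ and then defer the general case to ``an inductive bookkeeping argument,'' which you yourself identify as the principal obstacle. Without actually running that induction (tracking how the weight drops by one at each nested $D$, how the $X$- and $Y$-slots redistribute under $\nabla_n$, and confirming that no subleading term can restore a vanished leading coefficient), the sharpness of the exceptional set is unproven. The plan is the right one, but as written it is a proof outline with its hardest step left open.
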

\begin{proposition}[Theorem 3.4 of~\cite{Blitz1}] \label{higher-derivs-ops}
Let $J,k \in \mathbb{Z}_+$ such that $0<J$, $0<k<d/2$ and let $d$ be even. Then, there exists a family of conformally invariant differential operators $\delta_{J,k} : \Gamma(\ct^\Phi M[w]) \rightarrow \Gamma(\ct^\Phi M[w-k-J])|_{\Sigma}$ determined as follows. For $k \leq J$,
$$\delta_{J,k} = N^{A_1} \cdots N^{A_k} \delta_J P_{A_1 \cdots A_k}\,,$$
where $P_{A_1 \cdots A_k}$ is a tractor-valued operator with leading derivative term equal to $D_{A_1} \cdots D_{A_k}$, and all subleading terms depend at least linearly on the $W$-tractor.
For $k > J$, then $\delta_{J,k}$ is determined by the equation
$$(d+2w-2k) \delta_{J,k} = N^{A_1} \cdots N^{A_k} \delta_J P_{A_1 \cdots A_k}\,.$$
When $w = k-d/2$, $\delta_{J,k}$ has transverse order $J+k$.
\end{proposition}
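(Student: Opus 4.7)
The plan is to construct $\delta_{J,k}$ inductively on $k$, layering the base operator $\delta_J$ from Proposition~\ref{lower-normal-derivs} with iterated, $W$-tractor--corrected Thomas-$D$ operators. For the initial case $k=1$, since $\delta_J$, $D_A$, and $N^A$ are each conformally invariant, so is the composition $N^A \delta_J D_A$. From the explicit formula for the Thomas-$D$ operator one computes that on weight-$w$ sections $N_A D^A = (d+2w-2)\,\delta_R$, so after cancelling the prefactor the composition produces a single additional normal derivative, giving transverse order $J+1$ at the critical weight.

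For $k \geq 2$, the first subtlety is that iterated Thomas-$D$ operators do not commute: brackets $[D_A, D_B]$ generate curvature expressions built from the $W$-tractor. Because the contraction with $N^{A_1}\cdots N^{A_k}$ is totally symmetric, the construction of $P_{A_1\cdots A_k}$ begins from the iterated composition $D_{A_1}\cdots D_{A_k}$ and is augmented by $W$-tractor--linear corrections so that the fully symmetrized expression acts conformally on weighted tractor bundles. These corrections are determined inductively, with each additional Thomas-$D$ layer requiring further counter-terms whose precise form depends on the weight and tractor-type of the section being acted on.

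The main obstacle, and the reason for the two-case formulation in the statement, is the resonance $d+2w-2k=0$. Each Thomas-$D$ factor contributes a prefactor $(d+2w'-2)$ depending on the weight $w'$ at which it acts, and after $k$ applications an overall factor of $(d+2w-2k)$ appears in the leading symbol. When $k \leq J$, this factor is absorbed by the subsequent normal derivatives of $\delta_J$, and the operator can be defined directly as $\delta_{J,k} := N^{A_1}\cdots N^{A_k}\, \delta_J\, P_{A_1\cdots A_k}$. When $k > J$, however, this factor cannot be absorbed and the naive construction degenerates precisely at the critical weight $w=k-d/2$. The fix implicit in the statement is a renormalization: one must establish that $N^{A_1}\cdots N^{A_k}\, \delta_J\, P_{A_1\cdots A_k}$ is divisible by $(d+2w-2k)$, so that the quotient defines $\delta_{J,k}$ as a well-defined, conformally invariant operator even at the critical weight. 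Verifying this divisibility---which requires careful tracking of how the $W$-tractor corrections inside $P_{A_1\cdots A_k}$ conspire with the structure of $\delta_J$---is the hardest step of the argument.

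Once the operator is defined, the transverse-order claim follows by a symbol analysis at the critical weight: each contracted Thomas-$D$ factor contributes one $\nabla_n$ after cancellation of its prefactor, $\delta_J$ supplies $J$ further normal derivatives, and every curvature-dependent subleading term involves strictly fewer transverse derivatives. Combining these contributions yields the advertised transverse order $J+k$.
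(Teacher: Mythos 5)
The paper itself offers no proof of this proposition: it is imported verbatim as Theorem 3.4 of~\cite{Blitz1}, so there is no in-paper argument to compare yours against. Judged on its own terms, your outline correctly identifies the skeleton of the construction (iterated Thomas-$D$ operators composed with $\delta_J$, the resonance at $d+2w-2k=0$, definition by division or weight-continuation for $k>J$, and a transverse symbol count), but it contains a conceptual error and defers exactly the steps that constitute the proof. The error: you motivate the $W$-tractor corrections in $P_{A_1\cdots A_k}$ as being needed so that the expression ``acts conformally.'' Each Thomas-$D$ operator is already conformally invariant, hence so is any composition $D_{A_1}\cdots D_{A_k}$; invariance is automatic and cannot be what forces the corrections. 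They are needed for the two properties you leave unproved: divisibility of the one-parameter family (in $w$) by $(d+2w-2k)$, and non-degeneracy of the leading transverse symbol of the quotient at the critical weight.

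Your account of the $k\le J$ versus $k>J$ dichotomy is also internally inconsistent: you assert that $(d+2w-2k)$ ``appears in the leading symbol'' after $k$ applications, yet that for $k\le J$ it is ``absorbed'' by $\delta_J$. If the leading symbol literally carried that factor, the top-order part would vanish at $w=k-d/2$ no matter what $\delta_J$ does. The actual mechanism is that at the critical weight the leftmost $D_{A_1}$ degenerates to $-X_{A_1}(\Delta+\cdots)$; the $X_{A_1}$ is killed by the final $N^{A_1}$-contraction unless a normal derivative inside $\delta_J$ first converts it (via $\nabla_n X_A = Z_{An}$ and $N^A Z_{An}=1$) into something with nonzero $N$-pairing, whereupon the Laplacian supplies two transverse derivatives at the cost of the one spent on $X_{A_1}$, preserving the total order $J+k$. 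Making this precise, and explaining why the rescue is available precisely when $k\le J$ but fails for $k>J$ (forcing the renormalized definition), is the substance of the theorem; your proposal names these difficulties --- divisibility, the dichotomy, the symbol computation --- without carrying any of them out, so it cannot be accepted as a proof.
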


Notably, these results are concerned with conformally-invariant normal derivative operators on tensor products of (weighted) tractor bundles. For our purposes, we require the existence of the same on tensor products of (weighted) \textit{(co)tangent} bundles---in particular, we require transverse derivatives on conformal metric variations $\delta g := h \in \Gamma(\odot^2 T^* M[2])$. That the required operators exist up to order $d-3$ is proved in the following proposition.

\begin{proposition} \label{normal-operators}
Let $\Sigma \hookrightarrow (M^d,\cc)$ be a conformally-embedded hypersurface with $d$ even. Then, for each $2 \leq k \leq d-3$, there exists a family of operators
\begin{align*}
\delta_{\otop}^{(k)} :& \Gamma(\odot^2_\circ T^*M[2]) \rightarrow \Gamma(\odot^2_\circ T^*\Sigma[2-k]) \\
\delta_{n}^{(k)} :& \Gamma(\odot^2_\circ T^*M[2]) \rightarrow \Gamma(T^*\Sigma[1-k]) \\
\delta_{nn}^{(k)} :& \Gamma(\odot^2_\circ T^*M[2]) \rightarrow \Gamma(\ce \Sigma[-k])\,,
\end{align*}
with leading transverse terms $\otop \circ \nabla_n^k$, $\top \circ (n \cdot \nabla_n^k)$, and $(n \otimes n) \cdot \nabla_n^k$, respectively.

Furthermore, there exist conformally-invariant operators
\begin{align*}
\delta_{\otop}^{(1)} :& \Gamma(\odot^2_\circ T^*M[2]) \rightarrow \Gamma(\odot^2_\circ T^*\Sigma[1]) \\
\delta_{nn}^{(1)} :& \Gamma(\odot^2_\circ T^*M[2]) \rightarrow \Gamma(\ce \Sigma[-1])\,,
\end{align*}
with leading transverse terms $\otop \nabla_n$ and $(n \otimes n) \cdot \nabla_n$ respectively.
\end{proposition}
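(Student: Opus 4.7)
My strategy is to construct each operator by first lifting $h \in \Gamma(\odot^2_\circ T^*M[2])$ to a conformally invariant tractor field, then applying the conformally invariant normal derivative operators supplied by Propositions~\ref{lower-normal-derivs} and~\ref{higher-derivs-ops}, and finally extracting the three tensor components by projection against the normal tractor $N^A$ and the standard tractor injectors.

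Setting $w + 2 = 2$ in Display~\eqref{q-definition} yields a splitting operator $q_{AB}^{ab} : \Gamma(\odot^2_\circ T^*M[2]) \to \Gamma(\odot^2_\circ \mathcal{T}M[0])$; set $H_{AB} := q_{AB}^{ab} h_{ab}$. The assignment $h \mapsto H$ is conformally invariant, with leading symbol (in any scale) $Z^A_a Z^B_b$. For each transverse order $k$ in the stated range, I then build a conformally invariant composition $T_k$ of the operators from Propositions~\ref{lower-normal-derivs} and~\ref{higher-derivs-ops} that sends $H$ to a tractor field of weight $-k$ along $\Sigma$ whose leading symbol is $\nabla_n^k$. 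Applying $T_k$ to $H$ and projecting onto, respectively, the trace-free tangential-tangential component, the normal-tangential component, and the double-normal component defines $\delta^{(k)}_\otop$, $\delta^{(k)}_n$, and $\delta^{(k)}_{nn}$; their leading symbols come out to the corresponding projections of $\nabla_n^k h$ as claimed, by composing the leading terms of $q$ and $T_k$.

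The main hurdle is weight bookkeeping. Proposition~\ref{lower-normal-derivs} yields $\delta_K$ with transverse order $K$ only when the input weight $w$ avoids the set $\{(2K-1-d)/2,\ldots,(K+1-d)/2\}$; for $w = 0$ this is precisely $K \leq d/2$. Hence $T_k := \delta_k$ directly handles $2 \leq k \leq d/2$. For $d/2 < k$, I factor $T_k := \delta_{J, k'} \circ \delta_K$ with $K := d/2 - k'$ and $J := k - d/2$; the intermediate weight is $-K = k' - d/2$, exactly the weight at which Proposition~\ref{higher-derivs-ops} guarantees that $\delta_{J, k'}$ has transverse order $J + k'$, so the total transverse order is $K + J + k' = k$. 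Tracking the admissible ranges of $k'$ and $J$, together with the subleading structure of the $\delta_{J,k}$ formulas, is what produces the upper bound $k \leq d - 3$; this step will be the main technical work since it requires checking that no intermediate weight collides with a forbidden value and that the $W$-tractor corrections in the $\delta_{J,k}$ do not spoil the claimed leading symbol.

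For the two $k = 1$ operators, a single application of $\delta_R := \nabla_n - w H$ to $H_{AB}$ already produces a conformally invariant weight-$(-1)$ tractor along $\Sigma$ (the weight $w = 0$ trivially avoids the one-element exclusion at $K = 1$); projecting onto the trace-free tangential-tangential component defines $\delta^{(1)}_\otop$, and contracting with $N^A N^B$ defines $\delta^{(1)}_{nn}$. The omission of the normal-tangential operator at $k = 1$ reflects the fact that, by the trace-free Codazzi--Mainardi identity~\eqref{trfr-Codazzi}, the quantity $\top(n \cdot \nabla_n h)$ is already expressible via tangential derivatives of lower-transverse-order boundary data, and so is not a genuinely new first-transverse-order invariant operator at this weight.
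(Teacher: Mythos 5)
Your overall strategy --- lift $h$ by the splitting operator $q$, apply the invariant normal-derivative operators of Propositions~\ref{lower-normal-derivs} and~\ref{higher-derivs-ops}, then project onto the three components --- is indeed the paper's strategy, but the execution breaks down at the crucial range $k>d/2$. Both $\delta_K$ and $\delta_{J,k}$ take values in $\Gamma(\ct^\Phi[\,\cdot\,])|_{\Sigma}$, i.e.\ they produce sections defined \emph{only along} $\Sigma$, whereas any operator of positive transverse order requires the normal jets of its argument near $\Sigma$. Hence the composition $T_k=\delta_{J,k'}\circ\delta_K$ is not defined: $\delta_{J,k'}$ cannot act on the output of $\delta_K$. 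The paper's fix is to adjust the weight \emph{before} differentiating rather than by a first differentiation: it forms the rank-three tractor $\mathcal{H}_{ABC}=-\tfrac{2}{d-2}D_{[A}H_{B]C}$, which is defined on $M$ and has weight $-1=(d/2-1)-d/2$ --- exactly the weight at which the single operator $\delta_{k-d/2+1,\,d/2-1}$ of Proposition~\ref{higher-derivs-ops} has full transverse order $k$ for $d/2\le k\le d-3$, and at which $\delta_k$ works for $k\le d/2-1$. Without some such device your scheme cannot reach $k>d/2$, so the upper bound $d-3$ never actually enters your argument in the way you anticipate.

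Second, the assertion that ``the leading symbols come out as claimed by composing the leading terms'' skips what occupies most of the paper's proof. The commutators $[\nabla_n^k,X_A]$ and $[\nabla_n^k,Z_A^a]$ mix the slots of the tractor, so one must compute $X^AZ^B_bZ^C_c\nabla_n^k\mathcal{H}_{ABC}$ explicitly and verify that coefficients such as $(k-1)(2d-k-4)$ and $(d-k)$ do not vanish in the allowed range; moreover, the extraction of the tangential components must itself be conformally invariant, which forces the use of the adjoints $\bar{q}_1^*$ and $\bar{q}_2^*$, whose leading coefficients degenerate at special weights. That degeneration (the factor $-(w+1)$ in $\bar{q}_1^*$ at $w=-1$) is the actual reason $\delta_n^{(1)}$ is omitted from the statement; your explanation via the Codazzi--Mainardi identity is not correct, since that identity constrains the curvature and second fundamental form, not an arbitrary variation $h$.
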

\begin{proof}
To prove the proposition, we construct the operators acting on $h \in \Gamma(\odot^2_\circ T^*M[2])$ according to the following formulae and then verify that they have the correct leading transverse order term:
\begin{align*}
\delta^{(k)}_{\otop} h :=& -\tfrac{2}{d-2} (\bar{Q}^*_2 \circ \otop \delta^{k} D_{[A} q^{bc}_{B]C})(h_{bc}) \\
\delta^{(k)}_n h :=& -\tfrac{2}{d-2} (\bar{Q}^*_1 \circ \top N^B \delta^k  D_{[A} q^{bc}_{B]C})(h_{bc}) \\
\delta^{(k)}_{nn} h :=& -\tfrac{2}{d-2} (X^A N^B N^C \delta^k D_{[A} q^{bc}_{B]C})(h_{bc})\,,
\end{align*}
where $1 \leq k \leq d-3$ for the first and third operators,$2 \leq k \leq d-3$ for the second operator, and $q^{ab}_{AB}$ is defined as in Equation~(\ref{q-definition}). In the above, for $1 \leq k \leq \frac{d}{2} -1$, we define $\delta^k := \delta_k$ and for $\tfrac{d}{2} \leq k \leq d-3$, we define $\delta^k := \delta_{k-d/2 + 1,d/2-1}$, both of which are transverse-order $k$ normal derivative operators. Further, $\bar{Q}^*_1$ and $\bar{Q}^*_2$ are defined below (by construction). For brevity in this proof, we define $H_{AB} := q^{ab}_{AB}(h_{ab})$ and $\mathcal{H}_{ABC} := -\tfrac{2}{d-2}D_{[A} H_{B]C}$. 

Using the formula for $q_{AB}^{ab}$ in Section~\ref{sec:tractors}, $H$ is well-defined when $d \neq 0,1$. Next, a short calculation yields 
\begin{align*}
X^A Z^B_b Z^C_c \mathcal{H}_{ABC} &= -\tfrac{1}{d-2} X^A Z^B_b Z^C_c D_B H_{AC} \\
&= - X^A Z^C_c \nabla_b H_{AC} \\
&= h_{bc}\,.
\end{align*}
Furthermore, it is easy to check that $X^A Z^B_b X^C \mathcal{H}_{ABC} = 0$. It follows that the conformally-invariant part of $\mathcal{H}_{ABC}$ is proportional to $Y_{[A} Z_{B]} Z_{C}$.

We must now show that $\delta^k$ maps the $Y \wedge Z \otimes Z$ slot to $k$ normal derivatives of $h$ to leading order in the same slot. First, by the definition given, when $k \leq \tfrac{d}{2} -1$, it follows from Proposition~\ref{lower-normal-derivs} that $\delta^k = \nabla_n^k + \text{lower order terms}$. Similarly, from Proposition~\ref{higher-derivs-ops}, when $\tfrac{d}{2} \leq k \leq d-3$, $\delta^k = \nabla_n^k + \text{lower order terms}$. This is necessary to show that $\delta^k \mathcal{H}$ may contain a term of the form $Y_{[A} Z_{B]}^a Z_{C}^b \nabla_n^k h_{ab}$. To check that $\delta^k \mathcal{H}$ indeed does contain such a term, it suffices to verify that other terms do not cause a cancellation of its coefficient.

Now, by explicitly computing commutators, we have the following identities for $k \geq 3$,
\begin{align*}
[\nabla_n^k, X_A] &= k Z_{An} \nabla_n^{k-1} - \tfrac{1}{2} k(k-1) Y_A \nabla_n^{k-2} + \mathcal{O}(\nabla_n^{k-3}) \\
[\nabla_n^k, Y_A] &= \mathcal{O}(\nabla_n^{k-2}) \\
[\nabla_n^k, Z_A^a] &= -k n^a Y_A \nabla_n^{k-1} + \mathcal{O}(\nabla_n^{k-2})\,,
\end{align*}
Note these identities also hold for $k = 2$, except
$$[\nabla_n^2, X_A] = 2 Z_{An} \nabla_n -  Y_A \,.$$
In the above, we have dropped any terms that involve curvatures or derivatives of $n$  (as by weight considerations they cannot be of sufficiently high transverse order to contribute to the leading order structures in the desired operators). Using these, we have that for $k \geq 2$,
\begin{equation} \label{XZZ-H}
\begin{aligned}
X^A Z^B_b Z^C_c \nabla_n^k \mathcal{H}_{ABC} =& -\tfrac{(k-1)(2d-k-4)}{2(d-2)^2} \nabla_n^k h_{bc} 
+ \tfrac{k(k-1)(2d-k-4)}{2d(d-2)^2} n_c n^a \nabla_n^k h_{ba} 
\\&+ \tfrac{k(2d^2 - dk - 5d+4)}{2d(d-2)^2} n_b n^a \nabla_n^k h_{ca}
 - \tfrac{k(2d^2 -dk - 5d+4)}{2d(d-1)(d-2)^2} g_{ab} n^c n^d \nabla_n^k h_{cd} \\&- \tfrac{k(k-1)(2d-k-4)}{2d(d-1)(d-2)} n_a n_b n^c n^d \nabla_n^k h_{cd} 
\\&+ \text{lower order terms.}
\end{aligned}
\end{equation}
The first, second, and fifth terms in this expression only vanish when $k=2d-4$, which for $k \leq d-3$ and $d \geq 4$ is never satisfied. Similarly, the third and fourth terms only vanish for $k \geq 2$ when $k=4$ and $d=4$, which is disallowed because $k \leq d-3$. We have thus established that $\delta^k \mathcal{H}$ contains a term proportional to a non-zero constant times $\nabla_n^k h$. It remains to check that the remainder of the compositions necessary to construct the operators $\delta^{(k)}$, $\delta^{(k)}_n$, and $\delta^{(k)}_{nn}$ do not ruin this feature.

Now, because we are interested in irreducible projections to the boundary of these normal derivative operators, we can contract with the normal tractor $N$ and take the relevant traces. So, again keeping only leading normal derivative terms, we find that
$$\otop X^A Z^B_b Z^C_c \nabla_n^k \mathcal{H}_{ABC} \= -\tfrac{(k-1)(2d-k-4)}{2(d-2)^2} \otop \nabla_n^k h_{bc}\,.$$
Similarly, by multiplying Equation~(\ref{XZZ-H}) by $n^b$ and $n^b n^c$ respectively, we obtain
$$\top X^A N^B Z^C_c \nabla_n^k \mathcal{H}_{ABC} \= \tfrac{d-k}{d(d-2)} n^a \bar{g}^b_c \nabla_n^k h_{ab}\,,$$
and
$$ X^A N^B N^C \nabla_n^k \mathcal{H}_{ABC} \= \tfrac{(d-k)(2d^2 -2dk + k^2 -6d+3k+4)}{2d(d-1)(d-2)^2} n^b n^c \nabla_n^k h_{bc}\,.$$
Note that all of these have nonzero leading terms for the allowed values of $k$.

The final step is to show that we may extract these leading terms from the tractor construction via operators labelled $\bar{Q}^*$. Note that this is only required for the two operators taking values in tensor-valued densities---for the operators that map $h$ to a scalar, we are already done. Now, for those operators $\delta^{(k)}_{\otop}$ and $\delta^{(k)}_n$, we extract the appropriate normal derivative operators by contracting the tractor with $X$ and then applying the formal adjoint $\bar{q}^*$ of the relevant insertion operator $\bar{q}$. On a symmetric trace-free rank-2 tractor $W^{AB} \in \Gamma(\odot^2_{\circ} \ct \Sigma[w])$, this formal adjoint operator takes the form~\cite{josef}
\begin{multline*}
\bar{q}_2^*(W^{AB}) = \Big(w(w+1) \bar{Z}_A^{(a} \bar{Z}_B^{b)_{\circ}} - 2(w+1) \bar{\nabla}^{(a} \bar{Z}_A^{b)_{\circ}} X_B \\+ \bar{\nabla}^{(a} \bar{\nabla}^{b)_{\circ}} X_A X_B - w \bar{P}^{(ab)_{\circ}} X_A X_B \Big) W^{AB}\,.
\end{multline*}
Similarly, for a tractor vector $V^A \in \Gamma(\ct \Sigma[w])$,
\begin{align*}
\bar{q}_1^*(V^A) = \left(-(w+1) \bar{Z}_A^a + \bar{\nabla}^a X_A \right) V^A\,.
\end{align*}
Observe that when $X_A X_B W^{AB} = 0$, the extraction operator $\bar{q}^*_2$ may further be defined when $w = -1$ by weight continuation. 

Now we define
\begin{align*}
\bar{Q}^*_2 (T_{ABC}) &:= \bar{q}_2^* \circ \otop ( X^A T_{A(BC)_{\circ}}) \\
\bar{Q}^*_1 (T_{ABC} ) &:= \bar{q}^*_1 \circ \top (X^B N^C T_{ABC})
\end{align*}
for tractor sections of $\otimes^3 \mathcal{T} M[-k-1]$. Observe that for values of $k \geq 2$, the above operators containing $\bar{q}^*_1$ and $\bar{q}^*_2$ are well-defined and extract the desired tensor. It follows that $\delta^{(k)}_{\otop}$ and $\delta^{(k)}_n$ exist. On the other hand, for $k = 1$, observe that $X^A X^B \delta^1 \mathcal{H}_{ABC} = 0$ from the symmetry of $\mathcal{H}$, and so $\bar{Q}^*_2 (\delta^1 \mathcal{H}_{ABC})$ is well-defined as well. This ensures that $\delta^{(1)}_{\otop}$ exists. This completes the proof.

\end{proof}

We are now ready to prove the main result.

\begin{proof}[Proof of Theorem~\ref{main}]
As noted in Proposition~\ref{obstruction-variation}, on a manifold without boundary, $\int_M \mathcal{L}_d dV$ varies into the obstruction tensor. Note that because $\mathcal{L}_d$ is quadratic in curvature at leading order, integration by parts occurs at least twice in order to obtain the leading structure of the obstruction $\int h^{ab} (\Delta^{\frac{d-4}{2}} B_{ab} + \text{ subleading})$. Now such a term can only arise (up to identities) at leading order via integration by parts from an integral of the form
$$-\int \left( \alpha \nabla_c \Delta^{\frac{d-6}{2}} B_{ab} + (1-\alpha) \Delta^{\frac{d-4}{2}} C_{cab} \right) \nabla^c h^{ab}\,,$$
for some constant $\alpha$, which follows because $B_{ab} = \nabla^c C_{cab} + \text{lower order}$.
Indeed, after integrating by parts, we are left with (modulo subleading terms),
$$\int \left[ h^{ab} \Delta^{\frac{d-4}{2}} B_{ab} -\nabla^c \left( \alpha h^{ab} \nabla_c \Delta^{\frac{d-6}{2}} B_{ab}+ (1-\alpha) h^{ab} \Delta^{\frac{d-4}{2}} C_{cab}  \right) \right]\,,$$
as required.

Now for a manifold without boundary, the divergence term vanishes---however, when the manifold has a boundary, the boundary term (again, modulo subleading terms) takes the form
\begin{align*}
&\int_{\Sigma} n^c h^{ab}\left( \alpha  \nabla_c \Delta^{\frac{d-6}{2}} B_{ab}+ (1-\alpha) \Delta^{\frac{d-4}{2}} C_{cab}  \right) \\=&  \int_{\Sigma} n^c h^{ab}\left( \alpha  \nabla_c \Delta^{\frac{d-6}{2}} \nabla^d C_{dab}+ (1-\alpha) \Delta^{\frac{d-4}{2}} C_{cab}  \right)   + \text{subleading} \\
=&  \int_{\Sigma} \alpha n^d h^{ab}  \nabla_n^2 \Delta^{\frac{d-6}{2}} C_{dab}+ (1-\alpha) n^c h^{ab} \Delta^{\frac{d-4}{2}} C_{cab} + \text{subleading} \\
=& \int_{\Sigma} h^{ab} n^d \Delta^{\frac{d-4}{2}} C_{dab}+ \text{subleading}\,,
\end{align*}
where $n$ is an inward pointing unit normal vector.
In particular, this term has a non-vanishing coefficient. Our goal now is to show that this leading transverse-order integrand on the boundary combines with lower order terms to produce a pointwise conformal invariant with leading structure $n^d \Delta^{\frac{d-4}{2}} C_{d(ab)}$ contracted into the metric variation along the boundary.
%

By construction, the variation of the integral in question is conformally invariant as is the bulk integral itself (which can be established by examining the variation of the integral on a manifold without boundary). It follows that the boundary integral arising from the variation is separately conformally invariant.


To show pointwise invariance of the integrand along the boundary, we now must decompose the integral along $\Sigma$.

In general, the integrand of the boundary integral can be expressed in terms of bulk curvatures, the normal vector, the metric variation, and their derivatives. Furthermore, those derivatives can be decomposed into intrinsic hypersurface derivatives and transverse derivatives. Moreover because the boundary has no boundary, we may integrate by parts to remove intrinsic hypersurface derivatives from the metric variation, leaving only transverse derivatives on metric variations contracted into curvatures (whether intrinsic or extrinsic) and copies of the normal vector. Then we perform a tangential decomposition of those transverse derivatives on metric variations into four independent components:
$$\otop \nabla_n^k \mathring{h}_{ab} \qquad \top n^b \nabla_n^k \mathring{h}_{ab} \qquad n^a n^b \nabla_n^k \mathring{h}_{ab} \qquad \nabla_n^k h_a^a\,.$$

Observe that the first integration by parts to remove derivatives from $h$ may leave up to $d-3$ derivatives on $h$, as schematically terms of the form $(\nabla^{m} W)~\cdot~(\nabla^{\ell+2} h)$ with $m + \ell = d-4$ can appear in $\mathcal{L}_d$.  So we can then the boundary integral as
$$\int_{\Sigma} dV_{\bar g} \sum_{i=0}^{d-3} \left(\bar{O}_{(i)}\nabla_n^i h^a_a + \bar{P}_{(i)} n^a n^b \nabla_n^i \mathring{h}_{ab} + \bar{Q}_{(i)}^a \top (n^b \nabla_n^i \mathring{h}_{ab} )+ \bar{R}_{(i)}^{ab} \otop \nabla_n^i \mathring{h}_{ab} \right)$$
where $\bar{O}_{(i)},\bar{P}_{(i)} \in C^\infty \Sigma$, $\bar{Q}^a_{(i)} \in \Gamma(T\Sigma)$, and $\bar{R}^{ab}_{(i)} \in \Gamma(\odot^2_{\circ} T\Sigma)$ are composed of intrinsic and extrinsic curvatures and their derivatives for each $i \in \{0,\ldots, d-3\}$. 

To simplify this integral, we may first consider just those variations $h = fg$, i.e. those that are pure trace. However, note that such a variation amounts to a family of metrics $(g_t)_{ab} = (1+ft) g_{ab}$. But then $g_t$ is in the same conformal class as $g$, and hence any such variation must vanish from the conformal invariance of the integral. Hence, $\bar{O}_{(i)}$ vanishes for each $i$; indeed, going forward any coefficient of a pure-trace term must vanish.

Now,  from Proposition~\ref{normal-operators},
we may express the various projections of normal derivatives on the weight $2$ tensor $h$ in terms of conformally-invariant operators on $\mathring{h}$. In particular, starting at $i = d-3$, we may express
\begin{align*}
n^a n^b \nabla_n^{d-3} \mathring{h}_{ab} &= (\delta^{(d-3)}_{nn} \mathring{h}) + \text{lower order terms} \\
\top n^b \nabla_n^{d-3} \mathring{h}_{ab} &= (\delta^{(d-3)}_{n} \mathring{h})_a + \text{lower order terms} \\
\otop \nabla_n^{d-3} \mathring{h}_{ab} &= (\delta^{(d-3)}_{\otop} \mathring{h})_{ab} + \text{lower order terms}\,.
\end{align*}
Thus, we can replace the $i=d-3$ terms in the above integral with conformally-invariant expressions plus lower order terms that depend on at most $d-4$ transverse derivatives of $h$. In doing so, it is possible that there may be intrinsic hypersurface derivatives on those lower order terms. To handle those, we can integrate-by-parts, leaving us with a new (but equivalent) integral. Then, by induction we can apply this same procedure to the case where $i = d-4$, replacing each normal derivative with the conformally-invariant equivalent. Descending down to $i = 2$ for $\delta^{(i)}_{n}$ and $i = 1$ for the other operators, we come to an  the integral of the form
\begin{multline} \label{boundary-term}
\int_{\Sigma} dV_{\bar g} \sum_{i=1}^{d-3} \left( \tilde{P}_{(i)} (\delta^{(i)}_{nn} \mathring{h})+ \tilde{R}_{(i)}^{ab} (\delta^{(i)}_{\otop} \mathring{h})_{ab} \right) + \int_{\Sigma} dV_{\bar g}  \sum_{i=2}^{d-3} \tilde{Q}_{(i)}^a (\delta^{(i)}_n \mathring{h})_a
\\+ \int_{\Sigma} dV_{\bar g} \left(  \hat{P}_{(0)} \mathring{h}_{nn} + \hat{Q}_{(0)}^a \mathring{h}_{na}^\top +  \hat{Q}_{(1)}^a \top (n^b \nabla_n \mathring{h}_{ab} )+ \hat{R}_{(0)}^{ab} \otop \mathring{h}_{ab} \right)\,.
\end{multline}

As the whole integral is conformally invariant and each term in the decomposition of the transverse jets of the variation is independent and via arbitrary variations maps surjectively onto its range, it follows from Proposition~(\ref{normal-operators} that each of the tensors  $\tilde{P}_{(i)}$ and $\tilde{R}_{(i)}^{ab}$ for $1 \leq i \leq d-3$ is conformally-invariant and for each $2 \leq i \leq d-3$, $\tilde{Q}_{(i)}^a$ is conformally invariant. So each term in the first two integrals has been decomposed into pointwise conformal invariants, and the third integral in Display~(\ref{boundary-term}) is independently conformally invariant.

Now observe that one may choose $h$ such that $n^b \nabla_n \mathring{h}_{ab}|_{\Sigma} \neq 0$ but $\mathring{h}|_{\Sigma} = 0$. Then, $n^b \nabla_n \mathring{h}_{ab}|_{\Sigma}$ is  itself conformally invariant, as its failure to be conformally invariant vanishes when $\mathring{h}|_{\Sigma}$ vanishes. As the coefficient $n^b \nabla_n \mathring{h}_{ab}|_{\Sigma}$ is arbitrary, this ensures that $\hat{Q}^a_{(1)}$ must be pointwise conformally invariant and is a section of $T\Sigma[1-d]$.

Now, by explicit computation we find that under a conformal rescaling $g \mapsto \Omega^2 g$,
$$\top (n^b \nabla_n \mathring{h}_{ab}) \mapsto \top (n^b \nabla_n \mathring{h}_{ab})  - \bar{\Upsilon}_a \mathring{h}_{nn} + \top (\bar{\Upsilon}^c \mathring{h}_{ac})\,,$$
where $\bar{\Upsilon} = \nabla^\top \log \Omega |_{\Sigma}$. Thus, by conformally varying each term in the third integral of Display~(\ref{boundary-term}) and recalling that said integral is itself conformally invariant, it is clear that
\begin{align*}
\hat{P}_{(0)} \mapsto& \bar{\Omega}^{1-d}(\hat{P}_{(0)} + \bar{\Upsilon}_a \hat{Q}^{a}_{(1)} )\\
\hat{R}^{ab}_{(0)} \mapsto& \bar{\Omega}^{-1-d}(\hat{R}^{ab}_{(0)} - \bar{\Upsilon}^{(a} \hat{Q}^{b)_{\circ}}_{(1)})\,,
\end{align*}
where $\bar{\Omega} := \Omega|_{\Sigma}$.

Observe that $\bar{\nabla}_a \hat{Q}^a_{(1)}$ is conformally invariant and
$$\bar{\nabla}^{(a} \hat{Q}^{b)_{\circ}}_{(1)} \mapsto \bar{\Omega}^{-1-d}(\bar{\nabla}^{(a} \hat{Q}^{b)_{\circ}}_{(1)} - (d-1) \bar{\Upsilon}^{(a} \hat{Q}^{b)_{\circ}}_{(1)})\,.$$
So, defining
\begin{align*}
\tilde{Q}_{(0)}^a :=& \hat{Q}_{(0)}^a \\
\tilde{R}_{(0)}^{ab} :=& \hat{R}_{(0)}^{ab} - \tfrac{1}{d-1} \bar{\nabla}^{(a} \hat{Q}^{b)_{\circ}}_{(1)}\,,
\end{align*}
each of $\tilde{Q}^a_{(0)}$ and $\tilde{R}^{ab}_{(0)}$ are pointwise conformally invariant. Furthermore, the third integral in Display~(\ref{boundary-term}) becomes
\begin{multline*}
\int_{\Sigma} dV_{\bar g} \left(  \tilde{Q}_{(0)}^a \mathring{h}_{na}^\top + \tilde{R}_{(0)}^{ab} \otop \mathring{h}_{ab} \right) \\
+\int_{\Sigma} dV_{\bar g} \left( \hat{P}_{(0)} \mathring{h}_{nn}  + \hat{Q}^a_{(1)} \top (n^b \nabla_n \mathring{h}_{ab}) + \tfrac{1}{d-1} (\bar{\nabla}^{(a} \hat{Q}^{b)_{\circ}}_{(1)})\otop  \mathring{h}_{ab}  \right)\,.
\end{multline*}

%

Finally we note that the leading transverse-order term in each of $\bar{Q}_{(i)}$ and $\bar{R}_{(i)}$ is unchanged as it is replaced by $\tilde{Q}_{(i)}$ and $\tilde{R}_{(i)}$ respectively. This follows because the leading transverse order term of any coefficient with index $(i)$ is greater than the leading transverse order term of any coefficient with index $(j)$ when $i < j$.
So, $\tilde{R}^{ab}_{(0)} \in \Gamma(T^* \Sigma[3-d])$ and it has leading transverse order term $ {\otop} \left(n^d \Delta^{\frac{d-4}{2}} C_{d(ab)} \right)$, which has transverse order $d-1$. Thus the theorem holds.
\end{proof}

\begin{remark}
For notational brevity going forward, we will define $\delta^{(1)}_n \equiv 0$, the zero operator, $\tilde{Q}^a_{(1)} \equiv 0$, the zero section of $T \Sigma[1-d]$, and $\tilde{P}_{(0)} \equiv 0$, the zero section of $\ce \Sigma[1-d]$. Further, we will denote by $I$ the conformally-invariant integral
$$I := \int_{\Sigma} dV_{\bar g} \left( \hat{P}_{(0)} \mathring{h}_{nn}  + \hat{Q}^a_{(1)} n^b \nabla_n \mathring{h}_{ab} + \tfrac{1}{d-1} (\bar{\nabla}^{(a} \hat{Q}^{b)_{\circ}}_{(1)}) \otop \mathring{h}_{ab} \right)\,,$$
so we can write the variation $\delta S$ as
\begin{align} \label{variational-formula}
\delta S = \int_M \mathcal{O}^{(d)}_{ab} h^{ab} dV_g + I + 
\int_{\Sigma} dV_{\bar g} \sum_{i=0}^{d-3} \left(\tilde{P}_{(i)} (\delta^{(i)}_{nn} \mathring{h})+  \tilde{Q}^a_{(i)} (\delta_n^{(i)} \mathring{h})_a +  \tilde{R}_{(i)}^{ab} (\delta^{(i)}_{\otop} \mathring{h})_{ab} \right)\,.
\end{align}
In the above, each $\tilde{R}_{(i)}$ is an example of a $\FF{d-i}$, and we suspect that $\tilde{P}_{(i)}$ are closely related to the scalar conformal fundamental forms of~\cite{GoverKopWaldron}. It appears that each $\tilde{Q}_{(i)}^a$ is a novel vector-valued conformal hypersurface invariant.
\end{remark}


Theorem~\ref{DN} arises as a corollary of Theorem~\ref{main}:
\begin{proof}[Proof of Theorem~\ref{DN}]
In Theorem~\ref{main} we have established, among other things, that there exists a natural conformal hypersurface invariant $\FF{d} \in \Gamma(\odot^2_\circ T^* \Sigma[3-d])$ with transverse order $d-1$ and leading term $\otop \left(n^d \Delta^{\frac{d-4}{2}} C_{d(ab)} \right)$.  From~\cite[Corollary 3.3]{blitz-classifying}, it follows that this section has unique leading transverse order term. In general, such an invariant is not unique as it may be adjusted (at subleading order) by adding conformally-invariant terms of the appropriate weight and tensor type of lower transverse order like $(\IIo^{d-1})_{(ab)_{\circ}}$. However, when $\Sigma \hookrightarrow (M,\cc)$ is the conformal manifold associated with a Poincar\'e--Einstein manifold, it follows from~\cite[Theorem 1.8]{BGW1} and~\cite[Theorem 1.1]{blitz-classifying} that all subleading conformally-invariant terms vanish, with the possible exception of conformal invariants intrinsic to the hypersurface $(\Sigma,\bar{\cc})$. However, there are no such conformal invariants of weight $3-d$, as $3-d$ is odd and all natural intrinsic conformal invariants have even weight. So the uniqueness of the leading order structure and the vanishing of subleading conformal invariants implies that this invariant is uniquely determined by the Poincar\'e--Einstein structure (up to multiplication by a constant). But then it must agree with the image of the Dirichlet-to-Neumann map $\operatorname{DN}^{(d)}$ as described in~\cite{BGKW}.
\end{proof}

\subsection{Conserved Currents}
Having constructed $\FF{d}$, we now may use it to further glean information about the way the boundary of a conformal manifold is embedded. Indeed, we may straightforwardly prove Corollary~\ref{divergence-corollary}.
\begin{proof}[Proof of Corollary~\ref{divergence-corollary}]
First, observe that by a straightforward computation, one may show that for any section $t_{ab} \in \odot^2_{\circ} T^*\Sigma[3-d]$, it follows that $\bar{\nabla}^a t_{ab}$ is itself conformally invariant and a section of $T^* \Sigma[1-d]$.

Next, from Theorem~\ref{DN}, it follows that $\FF{d}$ is the image of the Dirichlet-to-Neumann map for the Poincar\'e--Einstein problem. But then its divergence vanishes, see~\cite[Remark 3.10]{BGKW}.
\end{proof}

In light of Corollary~\ref{divergence-corollary}, we may improve the
definition found in~\cite{BGW1} for an asymptotically
Poincar\'e--Einstein manifold. To do so we first requires a standard definition:
\begin{definition} 
Let $(M,\cc)$ be a conformal manifold with boundary. Recall $g^o \in
\cc|_{M_+}$ is conformally compact if $g_s=s^2 g^o$ extends smoothly
as a metric to the boundary, for $s \in C^\infty M$ some boundary
(equivalently any) defining function. In this case the scalar
curvature $Sc^{g^o}$ extends smoothly to the boundary, and we use
$Sc^{g^o}$ to denote that extension.  Let $g^o \in \cc|_{M_+}$ be a
conformally compact metric such that
$$Sc^{g^o} = -d(d+1) + \mathcal{O}(s)\, .$$   We call such a metric representative an
\textit{asymptotically hyperbolic} metric.
\end{definition}

Similarly, from the conformal invariance of the almost-Einstein
operator \eqref{Eop} it follows that, on a conformally compact
manifold $(M,g^o)$, the quantity $s \mathring{P}^{g^o}$ (with $s$ as
above) also extends smoothly to the boundary. As the vanishing of this
quantity is equivalent to the Einstein condition, we are now able to
define a special kind of asymptotically hyperbolic metric:
\begin{definition}\label{asymp-PE}
  Let $(M,\cc)$ be a conformal manifold with boundary and let $g^o$ be an asymptotically hyperbolic metric on its interior. Then $g^o$ is said to be \textit{asymptotically Poincar\'e--Einstein} if
  $$s\mathring{P}^{g^o} = \mathcal{O}(s^{d-2}) ,$$
  where $s \in C^\infty M$ that is a defining function for the boundary, 
and  the boundary embedding $\partial M \hookrightarrow (M,\cc)$ satisfies $\bar{\nabla} \cdot \FF{d} = 0$.
\end{definition}
\noindent Observe that the first condition for a metric $g^o$ to be
asymptotically Poincar\'e--Einstein is directly inspired. However the second
condition can not be easily read off from the Einstein condition
alone, but is rather suggested by  the result in Corollary~\ref{divergence-corollary}.

In view of Definition~\ref{asymp-PE} and Propositions~\ref{singyam}
and~\ref{unique-singyam}, it is evident that when there exists an
asymptotically Poincar\'e--Einstein metric in the conformal class of
the interior of a conformal manifold with boundary, the singular
Yamabe metric is one such metric.  In fact it is {\em the} canonical
such metric, up to the given order, as a consequence of the uniqueness
of the singular Yamabe solution. (The point here is that any PE metric
is, in particular, a singular Yamabe solution, so if a PE metric exists, the uniqueness of the singular Yamabe metric implies the PE metric is also unique.) For brevity, we will
say that a conformal manifold with boundary admits an asymptotically
Poincar\'e--Einstein metric when there exists such a metric in the
conformal class of the interior of the manifold.
%
%

As a consequence of the main theorem of~\cite{BGW1} and the Definition~\ref{asymp-PE} given above, we are thus able to state the simple corollary:
\begin{corollary} \label{APE-CFFs}
Let $(M,\cc)$ be an even-dimensional conformal manifold with boundary. Then $(M,\cc)$ admits an asymptotically Poincar\'e--Einstein metric if and only if $\FF{k} = 0$ for all $k \in \{2, 3, \ldots, d-1\}$ and $\bar{\nabla} \cdot \FF{d} = 0$.
\end{corollary}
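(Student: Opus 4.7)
The plan is to unpack Definition~\ref{asymp-PE} into two logically independent pieces---a bulk asymptotic Einstein condition on the Schouten tensor of the interior metric, and the boundary equation $\bar{\nabla}\cdot\FF{d}=0$---and then match each piece against the conclusions already available from~\cite{BGW1} and Corollary~\ref{divergence-corollary}.

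For the forward implication, suppose $(M,\cc)$ admits an asymptotically Poincar\'e--Einstein metric $g^o$ in the sense of Definition~\ref{asymp-PE}. Then by definition $\bar{\nabla}\cdot\FF{d}=0$ holds on the boundary, so it remains to establish $\FF{k}=0$ for $2\leq k\leq d-1$. The first clause of Definition~\ref{asymp-PE} says that $s\mathring{P}^{g^o}=\mathcal{O}(s^{d-2})$, i.e.\ $g^o$ is Einstein to the order at which local obstructions to the Poincar\'e--Einstein condition can appear. The main theorem of~\cite{BGW1} states that the conformal fundamental forms $\FF{k}$ for $k\leq d-1$ are exactly the order-by-order local obstructions to the PE condition along the boundary; consequently each of $\FF{2},\ldots,\FF{d-1}$ must vanish, as required.

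For the reverse implication, assume $\FF{k}=0$ for all $k\in\{2,\ldots,d-1\}$ and $\bar{\nabla}\cdot\FF{d}=0$. By Propositions~\ref{singyam} and~\ref{unique-singyam}, the conformal class on $M_+$ contains a canonical singular Yamabe metric $g^o=\bg/\sigma^2$ which, as a defining metric, is automatically asymptotically hyperbolic. The content of the reverse direction is that the vanishing of $\FF{2},\ldots,\FF{d-1}$ forces $g^o$ to satisfy the asymptotic Einstein condition $s\mathring{P}^{g^o}=\mathcal{O}(s^{d-2})$: this is again the main theorem of~\cite{BGW1}, which identifies the $\FF{k}$ as the successive obstructions one encounters in solving $\mathring{P}=0$ formally in powers of the defining density, and a straightforward inductive argument shows that their simultaneous vanishing permits the formal solution to be constructed up to order $\sigma^{d-2}$. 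Together with the assumed boundary equation $\bar{\nabla}\cdot\FF{d}=0$, this yields both clauses of Definition~\ref{asymp-PE}.

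The main potential obstacle is purely bookkeeping: one must check that the range $k\in\{2,\ldots,d-1\}$ on the local fundamental forms matches the order $\mathcal{O}(s^{d-2})$ in the Einstein condition, and separately that Corollary~\ref{divergence-corollary} (asserting that $\bar{\nabla}\cdot\FF{d}$ obstructs the existence of a PE filling) is consistent with putting the $\FF{d}$-divergence condition rather than $\FF{d}$ itself into the definition. Both of these match by construction, since the $\FF{d}$ itself encodes the non-local Dirichlet-to-Neumann data and cannot vanish identically, but its divergence is the genuine local boundary constraint. No further computation is needed beyond invoking the two cited results.
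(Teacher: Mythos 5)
Your proposal is correct and follows essentially the same route as the paper, which offers no explicit proof beyond the sentence ``as a consequence of the main theorem of~\cite{BGW1} and Definition~\ref{asymp-PE}'': the divergence condition is part of the definition by fiat, and the equivalence of $\FF{k}=0$ for $2\leq k\leq d-1$ with the asymptotic Einstein condition $s\mathring{P}^{g^o}=\mathcal{O}(s^{d-2})$ (realized by the singular Yamabe metric) is exactly the content of~\cite{BGW1} that the paper invokes. Your only loose phrasing is the claim that $\FF{d}$ ``cannot vanish identically''---it can, being globally determined rather than locally obstructed---but this does not affect the argument.
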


On the other hand, when $\bar{\nabla} \cdot \FF{d} \neq 0$, this 1-form is a distinguished 1-form on the boundary of the conformal $d$-manifold.

\section{Conformally-Invariant Variational Problems on Manifolds with Boundary}
\label{boundary-problems}

As a consequence of the construction used in Section~\ref{theorem-proof}, we find applications in, among other things, variational approaches to the Fefferman--Graham flat problem, meaning the boundary problem associated to the PDE $\mathcal{O}^{(d)} = 0$. To see this, consider the classical case of the Einstein--Hilbert action on a 4-manifold:
$$S_{EH} = \tfrac{1}{16\pi} \int_M Sc \, dV_g\,,$$
where $Sc$ is the scalar curvature. On a manifold with boundary, it is known that~\cite{bavera}
\begin{align} \label{variation-EH}
16 \pi \delta S_{EH} = -\int_M G_{ab} h^{ab} dV_g + \int_{\partial M} (2 H \bar{g}^{ab} - \IIo^{ab}) h^\top_{ab} dV_{\bar g} - 6 \int_{\partial M} \delta (H dV_{\bar g})\,.
\end{align}
On a manifold without boundary, solving the problem $\delta S_{EH} = 0$ is equivalent to solving the Einstein field equations $G_{ab} = 0$ in a vacuum. Furthermore, it is known that the partial differential equation given by $G_{ab} = 0$, for metrics with Euclidean signature, is, after gauge-fixing, elliptic~\cite{Besse1987}. Now because $\delta H$ depends on the 1-jet of $h$ along $\Sigma$, and we may choose $h$ arbitrarily, it is clear that all three of the integrals must vanish independently to ensure that $\delta S_{EH} = 0$. But enforcing this then amounts to specifying both $g|_{ \partial M}$ and its first (normal) derivative. (Note that one may alternatively fix the coefficient of $h^\top$ in the second integral to vanish---this does not change the claim here.) In contrast, strictly fewer boundary conditions are required to make the gauge-fixed Einstein problem an elliptic boundary problem (of Fredholm index $0$)~\cite{Anderson2008}. So, we find that that, as written, the variational version of the boundary-value problem is overdetermined.

To resolve this problem, Gibbons, Hawking, and York modified the Einstein--Hilbert action. Consider the energy functional
$$S = \tfrac{1}{16\pi} \int_M Sc \, dV_{g} + \tfrac{3}{8\pi} \int_{\partial M} H dV_{\bar{g}}\,,$$
Varying this functional, we have that
$$16 \pi \delta S = -\int_M \sqrt{g} G_{ab} \delta g^{ab} dV_g + \int_{\partial M}  (2 H \bar{g}_{ab} - \IIo_{ab}) \delta \bar{g}^{ab} dV_{\bar{g}}\,,$$
Comparing with~\cite{Anderson2008}, it is now clear that solving the variational system is at least not overdetermined (in fact, it is underdetermined as gauge-fixing is not required to solve the variational problem). Alternatively, one may instead prescribe that $\II = 0$ and find the same. That is, the (no longer overdetermined) variational problem requires that either $\II = 0$ or that we fix $\bar{g}$.

Note that this procedure picks out an important ordered pair of extrinsic tensors for the Einstein equation on a manifold with boundary: the induced metric $\bar{g}$ and a trace-corrected form of the second fundamental form $\II$, $(\bar{g}, \II - 3 H \bar{g})$. Note that this is \textit{not} the trace-free part of $\II$.

For later comparison, it is helpful to record the variation of the Einstein--Hilbert action in terms of another set of variables. Unpacking Equation~(\ref{variation-EH}), we find that
\begin{multline*}
16 \pi \delta S_{EH} = -\int_M G_{ab} \delta g^{ab} dV_g \\+ \int_{\partial M} \left[(- \delta g)_{ab}^\top (\IIo^{ab} + H \bar{g}^{ab}) + 3 (\delta g)_{nn} H - (\nabla_n \delta g_{ab}) \bar{g}^{ab}  \right] dV_{\bar{g}}\,.
\end{multline*}
Observe that in this form, the overdetermined nature of the system is more manifest: without the imposition of extrinsic constraints, one would need both $\delta g|_{\partial M}$ and $\nabla_n \delta g|_{\partial M}$ to vanish as they are independent. In this form, the natural ordered pair of the Dirichlet data $\bar{g}$ and the Neumann data $\II - 3 H \bar{g}$ is less evident. However, one can still infer that, if $\delta g|_{\partial M}$ is to be free, then the second fundamental form  must vanish.

\medskip

Consider now the variational form of the Fefferman--Graham flat
problem, as discussed in Proposition~\ref{obstruction-variation},
except we now wish to treat a manifold with boundary. From
Equation~(\ref{variational-formula}), it is clear that the boundary
terms would depend non-trivially on both the variation of $g$ along
the boundary \textit{as well as} normal derivatives of that
variation. On the other hand the equation $\mathcal{O}^{(d)}=0$ is
elliptic of order $d$, up to the need for gauge fixing due to
diffeomorphism invariance and conformal invariance
\cite{A-Via}.
Thus one should not need to specify more than $d/2$ sections of $\odot^2 T^*
\partial M$.
Interestingly, when $d=4$, for the Fefferman--Graham flat
problem (i.e.\ the \textit{Bach flat problem}), we see that indeed only
two sections of $\odot^2 T^* \partial M$ must be fixed to force the
boundary terms to vanish. On the other hand, for all even $d \geq 6$,
the system appears to become ``overdetermined''. This suggests that one
should modify the energy functional by a boundary term in the spirit of
the Gibbons--Hawking--York boundary term. To see how this happens, we
first prove the following technical lemma:
\begin{lemma} \label{IIo-variation}
Let $\Sigma \hookrightarrow (M,g)$ be a smooth hypersurface embedding. Then, with respect to metric variations, the trace-free second fundamental form of $\Sigma$ varies according to
$$\delta \IIo_{ab}  =  \tfrac{1}{2} \delta_R \mathring{h}_{ab}  + 2 n_{(a}  \IIo^c_{b)} h_{nc}^\top  - \tfrac{1}{2} \IIo_{ab} h_{nn} + \IIo_{(a}^c  h^\top_{b)c}\,.$$ 
\end{lemma}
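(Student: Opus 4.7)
The plan is to differentiate the definition $\IIo_{ab} = \II_{ab} - H \bar{g}_{ab}$ under the metric variation $\delta g_{ab} = h_{ab}$ and to reorganize the resulting expression into the form stated in the lemma. Since the submanifold $\Sigma$ is held fixed we may choose a defining function $s$ independent of the metric; then the conormal $\partial_a s$ is $h$-independent and only the normalization of $n_a = \partial_a s / |\partial s|_g$ varies. A direct computation from $g_{ab} n^a n^b = 1$ and the condition that $\tilde n^a$ remains $\tilde g$-orthogonal to $T\Sigma$ gives
\begin{equation*}
\delta n_a = \tfrac{1}{2} h_{nn}\, n_a, \qquad \delta n^a = -\tfrac{1}{2} h_{nn}\, n^a - h^{\top a}{}_{n}, \qquad \delta \bar g_{ab} = h_{ab} - h_{nn}\, n_a n_b.
\end{equation*}

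Next I would compute $\delta \II_{ab}$ by applying the standard Christoffel variation $\delta \Gamma^{e}_{cd} = \tfrac{1}{2} g^{ef}(\nabla_c h_{df} + \nabla_d h_{cf} - \nabla_f h_{cd})$ to $\II_{ab} = \bar g_a^c \bar g_b^d \nabla_{(c} n_{d)}$. Using $\nabla_a n_b|_\Sigma = \II_{ab} + (\text{normal-direction terms})$ and trading the ambient $\nabla^\top$ for $\bar\nabla$ (which produces extra $\II$-quadratic contributions via the Gauss formula), one obtains $\delta \II_{ab}$ as $\tfrac{1}{2} \bar g_a^c \bar g_b^d \nabla_n h_{cd}$ at top order, plus explicit contractions of $\II$ with the normal--normal, normal--tangent, and tangent--tangent components of $h$. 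Tracing gives $\delta H$ and then $\delta \IIo_{ab} = \delta \II_{ab} - \delta H\, \bar g_{ab} - H\, \delta \bar g_{ab}$ assembles all contributions.

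The remaining task is algebraic repackaging. The top-order term $\tfrac{1}{2} \nabla_n h^\top_{ab}$ combines with the $-2H\, h$-type pieces arising from $H \delta \bar g$ and from the trace correction implicit in passing from $h$ to $\mathring h$, to produce $\tfrac{1}{2}\delta_R \mathring h_{ab}$, since $\delta_R = \nabla_n - wH$ with $w = 2$ is the correct weighted normal derivative on variations of $g$ (viewed as weight-$2$). The remaining bilinear terms in $\II$ and $h$ must collect into $2 n_{(a} \IIo^c_{b)} h^\top_{nc} - \tfrac{1}{2} \IIo_{ab} h_{nn} + \IIo_{(a}^{\phantom{(a}c} h^\top_{b)c}$. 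The main obstacle is purely bookkeeping: carefully tracking the $(\bar g, n\otimes n)$-decomposition of $h$, verifying that every pure-trace contribution cancels against the trace-removal operation $h \mapsto \mathring h$, and checking that the cross terms symmetrize correctly. A good sanity check is $h_{ab} = f g_{ab}$ (an infinitesimal conformal rescaling): then $\mathring h = 0$, $h_{nn} = f$, $h^\top_{nc} = 0$, and $h^\top_{bc} = f \bar g_{bc}$, so the claimed formula collapses to $\delta \IIo_{ab} = \tfrac{f}{2} \IIo_{ab}$, consistent with the conformal weight $1$ of $\IIo$.
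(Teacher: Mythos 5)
Your setup and the bulk of the computation follow the paper's own route exactly: fix the defining function $s$ so that only the normalization of $n$ varies, record $\delta n_a$, $\delta n^a$, $\delta\bar g_{ab}$ (all three of your formulae are correct), vary the Christoffel symbols, trade $\nabla^\top$ for $\bar\nabla$ via the Gauss formula, and then subtract the trace correction. Up to that point there is nothing to object to.

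The problem is in the final repackaging, and it is a genuine one. You assert that the top-order term $\tfrac12\nabla_n h^\top_{ab}$ combines with the $H$-type pieces coming from $H\,\delta\bar g$ and the trace removal to give $\tfrac12\delta_R\mathring h_{ab}$ with $\delta_R=\nabla_n-2H$. If you actually carry out the bookkeeping, the $H$-dependent contributions cancel identically (for instance $2Hn_{(a}h^\top_{b)n}+Hh^\top_{ab}-Hh_{ab}+Hn_an_bh_{nn}=0$), so no $-H\mathring h_{ab}$ term survives; what \emph{does} survive, besides $\tfrac12\otop\nabla_n\mathring h_{ab}$, is the tangential-derivative term $-\bar\nabla_{(a}\mathring h^{\top}_{|n|b)_\circ}$ coming from the piece $-\tfrac12\bar g_a^c\bar g_b^d\, n^e(\nabla_c h_{de}+\nabla_d h_{ce})$ of the connection variation. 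The correct leading structure is therefore the first-order conformally invariant operator
$$\tfrac12\,\delta^{(1)}_{\otop}\mathring h_{ab}=\tfrac12\,\otop\bigl[\nabla_n\mathring h_{ab}-2\bar\nabla_{(a}\mathring h^{\top}_{|n|b)_\circ}\bigr]\,,$$
which is what the paper's own proof arrives at (the ``$\delta_R$'' in the lemma's statement is the authors' abuse of notation for $\delta^{(1)}_{\otop}$, as one can confirm from how the lemma is applied later and from the trace identity $\bar g^{ab}\delta\IIo_{ab}=\IIo^{ab}h^\top_{ab}$, which fails for the literal $\nabla_n-2H$ reading). Your sanity check $h=fg$ cannot detect this because $\mathring h=0$ kills the leading term under either reading. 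As written, your proposed assembly does not close: the term $-\bar\nabla_{(a}\mathring h^{\top}_{|n|b)_\circ}$ is left unaccounted for, and a spurious $-H\mathring h_{ab}$ is introduced. Replacing $\delta_R$ by $\delta^{(1)}_{\otop}$ in your final step repairs the argument and reproduces the paper's proof.
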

\begin{proof}
To compute the variation of the trace-free fundamental form, we use the identity
$$\II_{ab} \= \bar{g}_{a}^c \bar{g}_{b}^d \nabla_c \frac{n_d}{\sqrt{|n|^2_g}}\,.$$
Note that for a given metric $g_0$, we may always choose for $\Sigma$ a defining function $s$ such that $n = ds$, with $|n|^2_g = 1$ (see e.g.~\cite{Wald}). It is important to note that, in the variations that follow, we treat $s$ as fixed (such that $ds$ has unit length in the reference metric), but $\delta(g^{-1}(ds,ds)) \not \= 0$ in general, as we allow the metric to change.

Now, we must find the variations of $\bar{g}_a^c$, $\nabla$, and $\tfrac{1}{\sqrt{|n|^2_g}}$ (viewed as quantities along $\Sigma$) with respect to $g$.
As $\sqrt{g_0^{-1}(n,n)} = 1$, we have that
$$\delta \left(\tfrac{1}{\sqrt{|n|^2_g}}\right) \= -\tfrac{1}{2} (\delta g^{-1})(n,n) \= \tfrac{1}{2} h_{nn}\,.$$
Next, note that
\begin{align*}
\delta \bar{g}_a^b &= \delta g_a^b - \delta(\tfrac{1}{|n|^2_g} n_a g^{bc} n_c) \\
&= - h_{nn}  n_a n^b + n_a h^{b}_n\,.
\end{align*}
Finally, we use variation of the connection, which is a standard result:
$$(\delta \nabla)_a v_b = -C^c_{ab} v_c\,,$$
where
\begin{align} \label{variation-nabla2}
C^a_{bc} = \tfrac{1}{2} g^{ad}( \nabla_b h_{dc} + \nabla_c h_{bc} - \nabla_d h_{bc})\,.
\end{align}

Putting all this together and using that $|n|^2_{g_0} = 1$, we have that
\begin{align*}
\delta \II_{ab} \=&  (-h_{nn} n_a n^c + n_a h_n^c) \bar{g}_b^d \nabla_c n_d + \bar{g}_a^c (-h_{nn} n_b n^d + n_b h_n^d) \nabla_c n_d \\&- \tfrac{1}{2} \bar{g}_a^c \bar{g}_b^d n^e (\nabla_c h_{de} + \nabla_{b} h_{de} - \nabla_{e} h_{dc}) + \tfrac{1}{2} \bar{g}_a^c \bar{g}_b^d\nabla_c (h_{nn} n_d) \\
\=& n_a h_n^c \II_{cb}+ n_b h_n^c \II_{ca} - \tfrac{1}{2} \bar{g}_a^c \bar{g}_b^d n^e (\nabla_c h_{de} + \nabla_{d} h_{ce} - \nabla_{e} h_{cd}) + \tfrac{1}{2} \II_{ab} h_{nn}\\
\=& 2 n_{(a}  \II^c_{b)} h_{nc}^\top - \bar{g}_{(a}^d \nabla_{b)}^\top (h_{dn}^\top + n_d h_{nn}) + \II_{(a}^c  h^\top_{b)c} + \tfrac{1}{2} \bar{g}_a^c \bar{g}_b^d \nabla_n h_{cd} + \tfrac{1}{2} \II_{ab} h_{nn} \\
\=& 2 n_{(a}  \II^c_{b)} h_{nc}^\top - \bar{g}_{(a}^d \nabla_{b)}^\top h_{dn}^\top - \tfrac{1}{2}\II_{ab} h_{nn} + \II_{(a}^c  h^\top_{b)c} + \tfrac{1}{2} \bar{g}_a^c \bar{g}_b^d \nabla_n h_{cd} \\
\=& 2 n_{(a}  \II^c_{b)} h_{nc}^\top - \bar{\nabla}_{(a} h_{b)n}^\top - \tfrac{1}{2}\II_{ab} h_{nn} + \II_{(a}^c  h^\top_{b)c} + \tfrac{1}{2} \bar{g}_a^c \bar{g}_b^d \nabla_n h_{cd} \\
\=& 2 n_{(a}  \IIo^c_{b)} h_{nc}^\top  - \tfrac{1}{2} \IIo_{ab} h_{nn} + \IIo_{(a}^c  h^\top_{b)c} -\bar{\nabla}_{(a} h_{b)n}^\top + \tfrac{1}{2} \bar{g}_a^c \bar{g}_b^d \nabla_n \mathring{h}_{cd}   \\&+ 2 H n_{(a}  h_{b)n}^\top - \tfrac{1}{2} H \bar{g}_{ab} h_{nn} + H h^\top_{ab} + \tfrac{1}{2d} \bar{g}_{ab}\nabla_n h_{c}^c\,.
\end{align*}
In the second equality, we have used the fact that $\nabla n = \II$ and that $n \cdot \II = 0$. In the third equality, we have expressed covariant derivatives in terms of the tangential and normal parts, i.e. $\nabla = \nabla^\top + n \nabla_n$, and then expressed $h_{na}$ in terms of its projections along $\Sigma$. In the fourth equality, we have combined terms and simplified, using that $\nabla n = \II$ as before. In the fifth equality, we have used the Gauss formula, namely that for any $v \in \Gamma(T^* \Sigma)$, we have that $\nabla_a^\top v_b = \bar{\nabla}_a v_b - n_b \II_a^c v_c$, where $\nabla_a^\top v_b$ is defined using any extension of $v$ to $T^*M$. In the final equality, we have expressed $\nabla_n h$ in terms of $\nabla_n \mathring{h}$ and all instances of $\II$ in terms of $\IIo$.

Now, to compute the variation of the trace-free second fundamental form, recall that
$$\IIo_{ab} = \II_{ab} - \tfrac{1}{d-1} \bar{g}_{ab} \bar{g}^{cd} \II_{cd}\,.$$
To compute its variation, note that $\bar{g} = g - \tfrac{1}{|n|^2_g} n \otimes n$, so
$$\delta \bar{g}_{ab} = h_{ab} + \delta (|n|^2_g) n_a n_b = h_{ab} - n_a n_b h_{nn}\,.$$
Because $\bar{g}^{ab} \bar{g}_{bc} = \bar{g}_c^a$, we also have that $\delta \bar{g}^{ab} = -\bar{g}^{ac} \bar{g}^{bd} h_{cd}$. Putting these together, we have that
\begin{align*}
\delta \IIo_{ab} \=& \delta \II_{ab} - \tfrac{1}{d-1} \left[ (d-1)(h_{ab} - n_a n_b h_{nn}) H - \bar{g}_{ab} \II^{cd} h_{cd} + \bar{g}_{ab} \bar{g}^{cd} \delta \II_{cd} \right] \\
\=& (\delta \II)_{(ab)_{\circ}} - (h_{ab} - n_a n_b h_{nn}) H + \tfrac{1}{d-1} \bar{g}_{ab} \II^{cd} h_{cd}\,.
\end{align*}

We can now finish the computation:
\begin{align*}
\delta \IIo_{ab} \=& 2 n_{(a}  \IIo^c_{b)} h_{nc}^\top  - \tfrac{1}{2} \IIo_{ab} h_{nn} + \IIo_{(a}^c  h^\top_{b)c} - \tfrac{1}{d-1} \bar{g}_{ab} \IIo^{cd} h_{cd} -\bar{\nabla}_{(a} h_{b)_{\circ} n}^\top + \tfrac{1}{2} \otop \nabla_n \mathring{h}_{cd}   \\&+ 2 H n_{(a}  h_{b)n}^\top + H h^\top_{ab} - \tfrac{1}{d-1} H \bar{g}_{ab} \bar{g}^{cd} h_{cd} - H h_{ab} + n_a n_b H h_{nn} + \tfrac{1}{d-1} \bar{g}_{ab} \II^{cd} h_{cd} \\
 \=& 2 n_{(a}  \IIo^c_{b)} h_{nc}^\top  - \tfrac{1}{2} \IIo_{ab} h_{nn} + \IIo_{(a}^c  h^\top_{b)c} -\bar{\nabla}_{(a} h_{b)_{\circ} n}^\top + \tfrac{1}{2} \otop \nabla_n \mathring{h}_{cd}   \\
 \=& 2 n_{(a}  \IIo^c_{b)} h_{nc}^\top  - \tfrac{1}{2} \IIo_{ab} h_{nn} + \IIo_{(a}^c  h^\top_{b)c}+ \tfrac{1}{2} \delta_{\otop}^{(1)} \mathring{h}_{cd} \,.
\end{align*}
In the third equality above, we have used Equation~(\ref{dn-otop}). So the lemma follows.
\end{proof}

\bigskip

Now, for the sake of illustrating the behavior of the boundary integrals, consider the action on a 4-manifold with boundary that varies in the Bach tensor in the interior. We apply the variation algorithm outlined in the proof of Theorem~\ref{main} to this case.
\begin{proposition} \label{Weyl-squared}
Let $(M^4,g)$ be a compact manifold with boundary $\partial M$. Then, the functional variation of $S = \frac14 \int_M |W|^2 \, dV$ with respect to metric variations is given by
\begin{align*}
\delta S = &-\int_M B^{ab} h_{ab} dV_g \\&+\int_{\partial M} \big[\tfrac{4}{3} W_n{}^{bc}{}_n \IIo_{bc} \mathring{h}_{nn} -(\IVo^{ab} + \IIo_c^{(a} W_{n}{}^{b)_{\circ} c}{}_n) \mathring{h}_{ab}^{\otop} - W_n{}^{bc}{}_{n} \delta^{(1)}_{\otop} \mathring{h}_{bc} \big] dV_{\bar g}\,.
\end{align*}
where $\delta g = h$.
\end{proposition}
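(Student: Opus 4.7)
The plan is to compute $\delta S$ directly by the standard variational calculation and then to run the reorganization scheme established in the proof of Theorem~\ref{main} in the special case $d=4$. First, using $\delta|W|^2 = 2 W^{abcd}\delta W_{abcd}$, the variation of the connection from Equation~(\ref{variation-nabla2}), and two integrations by parts, the bulk contribution collapses via the second Bianchi identity and the Ricci identity to $-B^{ab}h_{ab}$, which matches the definition~(\ref{bach-def}) of the Bach tensor. Each integration by parts produces a boundary term of the schematic form $\int_{\partial M} n^c(\cdots)\, dV_{\bar g}$, and I would record all of these in full.

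Next, the boundary integrals must be organized following the scheme in the proof of Theorem~\ref{main}. Since $d=4$, only the transverse orders $k=0$ and $k=1$ appear, so the only conformally invariant normal operators needed from Proposition~\ref{normal-operators} are $\delta^{(1)}_{\otop}$ and $\delta^{(1)}_{nn}$. I would split the boundary integrand into the independent pieces carrying $\mathring{h}_{nn}$, $\top n^a\mathring{h}_{na}$, $\otop\mathring{h}_{ab}$, $n^an^b\nabla_n\mathring{h}_{ab}$, $\top n^b\nabla_n \mathring{h}_{ab}$, and $\otop\nabla_n\mathring{h}_{ab}$, using intrinsic integration by parts on $\partial M$ to strip tangential derivatives off of $h$.

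The main obstacle is identifying the coefficient of $\otop\mathring{h}_{ab}$ as $-(\IVo^{ab} + \IIo^{(a}_c W_n{}^{b)_{\circ} c}{}_n)$. After integration by parts, the raw expression will involve $\nabla_n W$, intrinsic divergences of the Weyl tensor, and $HW$-type contributions. The trace-free Codazzi relation~(\ref{trfr-Codazzi}), the Fialkow--Gauss equation~(\ref{fialkow-gauss}), and the identity $B_{ab} = \nabla^c C_{cab}+P^{cd}W_{acbd}$ would then be combined to recognize the expression $C_{n(ab)}^\top + H W_{nabn} - \bar{\nabla}^c W^\top_{c(ab)n}$, i.e. $\IVo_{ab}$, modulo a residual $\IIo$-$W$ term that supplies the second piece of the coefficient. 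In parallel, Lemma~\ref{IIo-variation} is what permits trading the raw quantity $\otop\nabla_n\mathring{h}_{ab}$ for the conformally invariant operator $\delta^{(1)}_{\otop}\mathring{h}_{ab}$, at the cost of $\IIo$-linear contributions that feed back into the coefficients of $\otop\mathring{h}_{ab}$ and $\mathring{h}_{nn}$.

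Finally, the coefficient of $\delta^{(1)}_{\otop}\mathring{h}_{bc}$ is read off from the leading-transverse-order piece of the boundary integrand as $-W_n{}^{bc}{}_n$, and the $\mathring{h}_{nn}$ coefficient $\tfrac{4}{3} W_n{}^{bc}{}_n\IIo_{bc}$ emerges from the trace-part contributions generated by Lemma~\ref{IIo-variation} together with the Fialkow--Gauss identity. The absence of a $\top n^b\mathring{h}_{nb}$ term and of a $\delta^{(1)}_{nn}\mathring{h}$ term should follow from the pointwise conformal invariance of $|W|^2$: the entire boundary integrand is pointwise conformally invariant, and there is no natural conformal hypersurface tensor of the required weight and index type to serve as a nonzero coefficient for these slots. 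This also provides a stringent consistency check on the calculation.
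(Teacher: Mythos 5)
Your overall route is the same as the paper's: vary the connection, integrate by parts twice so the bulk collapses to $-\int_M B^{ab}h_{ab}$, then decompose the resulting boundary integrand $-\int_{\partial M}(n^cC_{cab}h^{ab}-n_aW^{abcd}\nabla_ch_{bd})\,dV_{\bar g}$ into tangential/normal slots, recognize $\IVo$ via the Cotton identity, and trade $\otop\nabla_n\mathring h$ for $\delta^{(1)}_{\otop}\mathring h$. (The paper does the last step with the explicit formula $\delta^{(1)}_{\otop}u_{ab}=\otop[\nabla_nu_{ab}-2\bar\nabla_{(a}u^\top_{nb)_\circ}]$ rather than through Lemma~\ref{IIo-variation}, but these are equivalent.)

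The genuine gap is your final step. You claim that the absence of a $\top n^b\mathring h_{nb}$ term and of a $\delta^{(1)}_{nn}\mathring h$ (equivalently $n^an^b\nabla_n\mathring h_{ab}$) term ``follows from the pointwise conformal invariance of $|W|^2$'' because ``there is no natural conformal hypersurface tensor of the required weight and index type.'' That invariant-theoretic claim is false. The coefficient of $\delta^{(1)}_{nn}\mathring h$ would be a natural conformal scalar hypersurface invariant of weight $-2$ on $\Sigma^3\hookrightarrow M^4$, and $|\IIo|^2\in\Gamma(\ce\Sigma[-2])$ is exactly such an invariant; similar $\IIo$-quadratic candidates exist for the one-form slot. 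So conformal invariance of the boundary integral cannot, by itself, force these coefficients to vanish; they vanish for computational reasons. Specifically: the only source of normal derivatives of $h$ is $n_an_cW^{abcd}\nabla_nh_{bd}=W_n{}^b{}_n{}^d\nabla_nh_{bd}$, and $W_n{}^b{}_n{}^d$ is automatically tangential and trace-free (since $W_{nbnd}n^b=0$ and $g^{bd}W_{nbnd}=0$), which kills the $n^an^b\nabla_n h_{ab}$ and $\nabla_nh^a_a$ slots at the source; and the coefficient of $\mathring h^\top_{na}$ assembles into $C_{ann}+\bar\nabla^bW_{nabn}-\IIo^{bd}W^\top_{ndab}+\dots$, which vanishes by the identity $C_{ann}\eqSig-\tfrac{1}{d-3}\bar\nabla^dW_{nadn}+\tfrac{1}{d-3}\IIo^{bd}W^\top_{ndab}$ (an exact cancellation, not an a priori impossibility). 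You must carry out this cancellation explicitly; as written, your argument would let you stop too early and could equally well ``prove'' the absence of terms that in other actions are genuinely present. The remaining bookkeeping in your plan (the $\IVo+\IIo\cdot W$ coefficient of $\mathring h^{\otop}$, and the $\tfrac43W_n{}^{bc}{}_n\IIo_{bc}$ coefficient of $\mathring h_{nn}$ arising from $\nabla^\top_cn=\II$ and the trace decomposition $h^\top_{ab}=\mathring h^{\otop}_{ab}+(\tfrac14\tr h-\tfrac13\mathring h_{nn})\bar g_{ab}$) is deferred but correctly described.
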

\begin{proof}
In order to use more standard variation formulas, we first translate the action into an integral of Riemann tensors and its traces:
$$S = \int_M \left(\tfrac{1}{4} R_{abcd} R^{abcd} - \tfrac{1}{2} Ric_{ab} Ric^{ab} + \tfrac{1}{12} Sc^2 \right) dV\,.$$
Now, defining $h = \delta g$ and using the (standard result for the) variation of the covariant derivative $(\delta \nabla)_a v_b = -\gamma^c_{ab} v_c$, where $\gamma$ is defined according to
\begin{align} \label{variation-nabla}
\gamma^a_{bc} = \tfrac{1}{2} g^{ad}( \nabla_b h_{dc} + \nabla_c h_{bd} - \nabla_d h_{bc})\,,
\end{align}
we may compute the variation of the Riemann tensor (see~\cite[Section 7.5]{Wald}). To do so, let $v \in \Gamma(T^* M)$.
\begin{align*}
(\delta R_{abc}{}^d) v_d =& \delta (\nabla_a \nabla_b v_c - \nabla_b \nabla_a v_c) \\
=& (\delta \nabla_a) \nabla_b v_c + \nabla_a (\delta \nabla_b) v_c - (\delta \nabla_b) \nabla_a v_c - \nabla_b (\delta \nabla_a) v_c \\
=& -\gamma^d_{ab} \nabla_d v_c - \gamma^d_{ac} \nabla_b v_d - \nabla_a (\gamma^d_{bc} v_d) + \gamma^d_{ba} \nabla_d v_c + \gamma^d_{bc} \nabla_a v_d + \nabla_b (\gamma^d_{ac} v_d) \\
=& - \gamma^d_{ac} \nabla_b v_d - \nabla_a (\gamma^d_{bc} v_d) + \gamma^d_{bc} \nabla_a v_d + \nabla_b (\gamma^d_{ac} v_d) \\
=& \left[(\nabla_b \gamma^d_{ac}) -(\nabla_a \gamma^d_{bc}) \right] v_d \\
=& \tfrac{1}{2} g^{de} \left[ \nabla_b (\nabla_a h_{ce} + \nabla_c h_{ae} - \nabla_e h_{ac}) - \nabla_a (\nabla_b h_{ce} + \nabla_c h_{be} - \nabla_e h_{bc}) \right] v_d \\
=& \tfrac{1}{2} g^{de} (\nabla_b \nabla_c h_{ae} - \nabla_b \nabla_e h_{ac} - \nabla_a \nabla_c h_{be} + \nabla_a \nabla_e h_{bc}) v_d \\&+ \tfrac{1}{2} g^{de} (R_{bac}{}^f h_{fe} + R_{bae}{}^f h_{cf}) v_d\,.
\end{align*}
So, it follows that
$$\delta R_{abcd} = \tfrac{1}{2}(\nabla_b \nabla_c h_{ad} - \nabla_b \nabla_d h_{ac} - \nabla_a \nabla_c h_{bd} + \nabla_a \nabla_d h_{bc}) + \tfrac{1}{2} R_{abc}{}^e h_{ed} - \tfrac{1}{2} R_{abd}{}^e h_{ce} \,.$$

Using this formula and its traces, we come to the variation formula
$$\delta S = \int_M \left(W^{cabd} \nabla_a \nabla_b h_{cd} - \tfrac{1}{2} W^{acde} W^b{}_{cde} h_{ab} + \tfrac{1}{8} |W|^2 h^a_a - W^{abcd} P_{bd} h_{ac} \right) dV_g \,.$$
Integrating by parts to isolate $h_{cd}$ and applying the divergence theorem (noting that $n$ is inward pointing), we have that
$$\delta S = -\int_M B^{ab} h_{ab} dV_g + \mathcal{B} \,,$$
where
$$\mathcal{B} := - \int_{\partial M} \left(n^c C_{cab} h^{ab} - n_a W^{abcd} \nabla_c h_{bd} \right) dV_{\bar g}\,.$$
Note that the integrand of $\mathcal{B}$ is pointwise conformally-invariant.

We can now handle the boundary integral $\mathcal{B}$. First, rewrite $\nabla_c h_{bd} = (\nabla_c^\top  + n_c \nabla_n) h_{bd}$ and then express
$$h_{ab} \= h_{ab}^\top + n_a h_{nb}^\top + n_b h_{na}^\top + n_a n_b h_{nn}\, ,$$
yielding
$$\mathcal{B} = \int_{\partial M} \left[-n^c C_{cab} h^{ab} + n_a W^{abcd} \nabla_c^\top (h_{bd}^\top + 2n_{(b} h_{d)n}^\top + n_b n_d h_{nn})  + n_a W^{abcd} n_c \nabla_n h_{bd} \right] dV_{\bar g}\,.$$
Using the fact that $\nabla^\top n = \II$, we can write
\begin{align*}
\mathcal{B} = \int_{\partial M} \big[&-n^c C_{cab} h^{ab} + n_a W^{abcd} \nabla_c^\top h_{bd}^\top + 2 n_a W^{abcd} \II_{c(b} h_{d)n}^\top \\&+ n_a n_d W^{abcd} \nabla_c^\top h_{bn}^\top + n_a n_d W^{abcd} \II_{bc} h_{nn} + n_a W_{abcd} n_c \nabla_n h_{bd} \big] dV_{\bar g}\,.
\end{align*}
Now using the Gauss formula
$$\nabla_a^\top v_b = \bar{\nabla}_a v_b - n_b \II_a^c v_c\,,$$
where $v \in \Gamma(T^* \Sigma)$ and $\nabla^\top v$ is defined using any extension of $v$ to $T^* M$, we have that we have that
\begin{align*}
\mathcal{B} = \int_{\partial M} \big[&-n^c C_{cab} h^{ab} + n_a W^{abcd} (\bar{\nabla}_c h_{bd}^\top - n_d \II_c^e h_{be}^\top) + 2 n_a W^{abcd} \II_{c(b} h_{d)n}^\top \\&+ W_n{}^{bc}{}_{n} \bar{\nabla}_{b} h_{cn}^\top + W_n{}^{bc}{}_n \IIo_{bc} h_{nn} - W_n{}^{bc}{}_{n} \nabla_n h_{bc} \big] dV_{\bar g}\,.
\end{align*}
Now, using the known expression~\cite{BGW1} for 
\begin{align} \label{dn-otop}
\delta^{(1)}_{\otop} u_{ab} := \otop [ \nabla_n u_{ab} - 2 \bar{\nabla}_{(a} u_{n b)_\circ}^\top ]\,,
\end{align}
where $u \in \Gamma(\odot^2_\circ T^* M[2])$, we may write
\begin{align*}
\mathcal{B} = \int_{\partial M} \big[&-n^c C_{cab} h^{ab} + n_a W^{abcd} (\bar{\nabla}_c h_{bd}^\top - n_d \II_c^e h_{be}^\top) + 2 n_a W^{abcd} \II_{c(b} h_{d)n}^\top \\&- W_n{}^{bc}{}_{n} \bar{\nabla}_{b} h_{cn}^\top + W_n{}^{bc}{}_n \IIo_{bc} h_{nn} - W_n{}^{bc}{}_{n} \delta_{\otop}^{(1)} \mathring{h}_{bc} \big] dV_{\bar g}\,.
\end{align*}
Now integrating by parts along $\partial M$, we have that
\begin{align*}
\mathcal{B} = \int_{\partial M} \big[&-n^c C_{cab} h^{ab} + (\bar{\nabla}_c  W^{cdb}{}_n^\top  ) h_{bd}^\top - W_n{}^{bc}{}_n \II_c^e h_{be}^\top + 2 n_a W^{abcd} \II_{c(b} h_{d)n}^\top \\&+ (\bar{\nabla}_{b} W_n{}^{bc}{}_{n}) h_{cn}^\top + W_n{}^{bc}{}_n \IIo_{bc} h_{nn} - W_n{}^{bc}{}_{n} \delta_{\otop}^{(1)} \mathring{h}_{bc} \big] dV_{\bar g}\,.
\end{align*}

Recall that for hypersurfaces in four-manifolds, $\IVo_{ab} := C_{n(ab)}^\top + H W_{nabn} - \bar{\nabla}^c W_{c(ab)n}^\top$. We also have $\II = \IIo + H \bar{g}$. So, 
\begin{align*}
\mathcal{B} = \int_{\partial M} \big[&-\IVo_{ab} h^{ab} + C^a{}_{nn} h_{na}^{\top} - W_n{}^{bc}{}_n \IIo_c^e h_{be}^\top  + n_a W^{abcd} \IIo_{cb} h_{dn}^\top  \\&+ (\bar{\nabla}_{b} W_n{}^{bc}{}_{n}) h_{cn}^\top + W_n{}^{bc}{}_n \IIo_{bc} h_{nn} - W_n{}^{bc}{}_{n} \delta_{\otop}^{(1)} \mathring{h}_{bc} \big] dV_{\bar g}\,.
\end{align*}
Using the definition of the Cotton tensor (in terms of the Weyl tensor) and the typical hypersurface identities, one may derive that, in any dimension $d \geq 4$,
\begin{align}
C_{ann} \=& \tfrac{1}{d-3} n^b n^c \nabla^d W_{dcab} \nonumber\\
\=& \tfrac{1}{d-3} n^b n^c \nabla^{\top \, d} W_{dcab} \nonumber \\
\=& -\tfrac{1}{d-3} \nabla^{\top \, d} W_{nadn} + \tfrac{1}{d-3}\IIo^{bd} W_{ndab}  \nonumber\\
\=& -\tfrac{1}{d-3} \bar{\nabla}^d W_{nadn} + \tfrac{1}{d-3} n_a \IIo^{de} W_{nedn}   + \tfrac{1}{d-3}\IIo^{bd} W_{ndab} \nonumber\\
\=& -\tfrac{1}{d-3} \bar{\nabla}^d W_{nadn}   + \tfrac{1}{d-3}\IIo^{bd} W_{ndab}^\top\,. \nonumber
\end{align}
So, we have that
\begin{align} \label{W2-variation-deltaR}
\mathcal{B} = \int_{\partial M} \big[&-(\IVo^{ab} + \IIo_c^{a} W_{n}{}^{bc}{}_n) h_{ab}^\top   + W_n{}^{bc}{}_n \IIo_{bc} h_{nn} - W_n{}^{bc}{}_{n} \delta_{\otop}^{(1)} \mathring{h}_{bc} \big] dV_{\bar g}\,.
\end{align}
Finally, noting that $h_{nn} = n^a n^b h_{ab} = \mathring{h}_{nn} + \tfrac{1}{4} \tr h$ and that
$$h^\top_{ab} = \mathring{h}^{\otop}_{ab} + (\tfrac{1}{4} \tr h - \tfrac{1}{3} \mathring{h}_{nn})\bar{g}_{ab}\,,$$
where $\mathring{h}^{\otop}$ is understood to mean $\otop (\mathring{h})$,
 we may simplify to
\begin{align*} 
\mathcal{B} = \int_{\partial M} \big[&-(\IVo^{ab} + \IIo_c^{(a} W_{n}{}^{b)_{\circ}c}{}_n) \mathring{h}_{ab}^{\otop}   + \tfrac{4}{3} W_n{}^{bc}{}_n \IIo_{bc} \mathring{h}_{nn} - W_n{}^{bc}{}_{n} \delta_{\otop}^{(1)} \mathring{h}_{bc} \big] dV_{\bar g}\,.
\end{align*}
This completes the proof.
\end{proof}

\begin{remark}
Observe that a natural variant of the fourth conformal fundamental form in $d=4$, as suggested by the variational problem in Proposition~\ref{Weyl-squared}, is
$$\IVo_{ab} + \IIo^c_{(a} W_{nb)_\circ cn} =  C_{n(ab)}^\top + H W_{nabn} - \bar{\nabla}^c W_{c(ab)n}^\top + \IIo^c_{(a} W_{|n|b)_\circ cn}\,.$$
\end{remark}

Now as a consequence of Lemma~\ref{IIo-variation} and Proposition~\ref{Weyl-squared} (in particular Equation~\ref{W2-variation-deltaR}), we may re-express the variation of the energy functional $\tfrac{1}{4} \int_ M |W|^2 dV$ on a 4-manifold with boundary as
$$\delta S = -\int_M B^{ab} h_{ab} dV_g \\+ \int_{\partial M} \Big( - [\IVo^{ab} - \IIo_c^{(a} W_{n}{}^{b) c}{}_n] h_{ab} - 2 W_n{}^{ab}{}_n \delta \IIo_{ab}\Big) dV_{\bar{g}}\,.$$
In this form, two natural ordered pairs arise from the Bach flat problem:
$$(\bar{g}_{ab}, -\IVo_{ab} + \IIo^c_{(a} W_{|n|b)cn}) \qquad \text{ and } \qquad (\IIo_{ab}, -2 W_{nabn})\,.$$

\medskip

More generally than Lemma~\ref{IIo-variation}, it is easy to see (from the fact that, in coordinates adapted to the hypersurface embedding, $k$th conformal fundamental forms have leading term $\otop \partial_s^{k-1} g$) that, at leading order,
$$\delta\, \FF{k}_{ab} \propto \delta_{\otop}^{(k-1)} \mathring{h}_{ab} + \text{ lower order terms.}$$
Such a relation allows for two observations. First, it implies a canonical pairing of sections of $\odot^2 T^* \Sigma$ between those with transverse order $m$ and $d-m-1$, expressible in terms of conformal fundamental forms and the induced metric of the form $\FF{m+1} \cdot \delta (\FF{d-m})$. Second, in $d \geq 6$, we may express the boundary contribution to the variation of the Fefferman--Graham action exhibited in Equation~(\ref{variational-formula}) in terms of conformal fundamental forms, their variations, and $I$. As such, it is straightforward to add conformally-invariant integrals of the form found in Display~(\ref{pair2}) to the Fefferman--Graham action to remove the problem of overdeterminedness, when necessary. Such integrals play the same role as the Gibbons-Hawking-York boundary term does.

When such terms are added, the variational problems become (at least) not overdetermined. In that case, we may view the pairings as between Dirichlet data and Neumann data. Because Fefferman--Graham flat problems are elliptic after gauge-fixing, it is sufficient to specify \textit{just} the Dirichlet data to solve the problem globally (or alternatively, one may specify just the Neumann data). At least in the case of the Bach-flat problem where the splitting of the boundary data is clear from the variational problem, when there exists a unique solution the Bach-flat problem on a given 4-manifold with boundary, there exists a well-defined Dirichlet-to-Neumann map, sending
\begin{align} \label{pair-mapping}
(\bar{g}_{ab}, \IIo_{ab}) \mapsto (-2 W_{nabn},-\IVo_{ab} + \IIo^c_{(a} W_{|n|b)cn})\,.
\end{align}
When such a map exists, it is interesting to consider its specializations. This is discussed in Section~\ref{sec:FG-flat}.

\section{Fefferman--Graham Flat Obstructions to Poincar\'e--Einstein} \label{sec:FG-flat}

The uniqueness of Bach-flat manifolds with a given
set of boundary data is not fully understood. It is straightforward to show, and well known, 
that all self-dual, anti-self-dual and Einstein manifolds are Bach-flat.
We are interested here in the link to Poincar\'e-Einstein metrics.  Maldacena
showed~\cite{maldacena} that at the level of small perturbations, when
$\IIo = 0$, solutions to the linearized Bach-flat boundary problem are
also solutions to the linearized Poincar\'e--Einstein
equation. In~\cite{AlaeeWool}, the fully non-linear equations were
considered; they found that an asymptotically hyperbolic metric
satisfies $\IIo = \IIIo = 0 = \bar{\nabla} \cdot \IVo$ (exactly in agreement with Corollary~\ref{APE-CFFs}), has a
vanishing obstruction density, is a formal solution to the Bach-flat
boundary-value problem, and satisfies a fourth constraint not relevant
for our discussion if and only if that asymptotically hyperbolic
metric is a formal solution to the Poincar\'e--Einstein equation.

Inspired by these results, we prove the stronger result that a Bach-flat manifold with umbilic boundary is sufficient to guarantee that the manifold's singular Yamabe metric is a formal solution to the Poincar\'e--Einstein equation.

\begin{theorem} \label{umbilic-PE}
Let $(M^4,\cc)$ be a smooth compact Bach-flat conformal manifold with umbilic boundary. Then the singular Yamabe metric of Proposition~\ref{unique-singyam} is smooth and, formally to all orders, Poincar\'e--Einstein.
\end{theorem}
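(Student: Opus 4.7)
My approach is to verify the hypotheses of Corollary~\ref{APE-CFFs} and then propagate the resulting asymptotically PE structure order-by-order using interior Bach-flatness. Since $d = 4$, the required boundary conditions reduce to (i) $\FF{2} = \FF{3} = 0$ and (ii) $\bar\nabla^a \FF{4}_{ab} = 0$ along $\Sigma$. The umbilic hypothesis is precisely $\FF{2} = \IIo = 0$. Substituting $\IIo = 0$ into the trace-free Fialkow--Gauss equation~(\ref{fialkow-gauss}) at $d = 4$ reduces $\FF{3} = \Fo = 0$ to the single claim $W_{nabn}|_\Sigma = 0$.

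Establishing $W_{nabn}|_\Sigma = 0$ is the main technical step. First, umbilicity combined with the trace-free Codazzi--Mainardi identity~(\ref{trfr-Codazzi}) immediately yields $W_{abcn}^\top|_\Sigma = 0$. I would then project the Bach-flat equation $B_{ab} = 0$ onto its $(n,n)$ and normal-tangent components, expand $B_{ab} = \nabla^c C_{cab} + P^{cd} W_{acbd}$, and use $C_{abc} = \nabla^d W_{dcab}$ in dimension four. On $\Sigma$, the $P \cdot W$ contractions split into terms involving $W$ with one normal index (which vanish by the umbilic--Codazzi consequence) and genuine $W_{nabn}$-type contractions with two normal indices; the $\nabla C$ terms split analogously, with tangential-derivative pieces killed by $W_{abcn}^\top|_\Sigma = 0$ and normal-derivative pieces reducing, via Ricci and Bianchi identities together with umbilicity, to algebraic multiples of $W_{nabn}$. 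Collecting the surviving terms then forces $W_{nabn}|_\Sigma = 0$.

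For (ii), with $W_{c(ab)n}^\top = 0$ and $W_{nabn}|_\Sigma = 0$ in hand, the explicit formula $\IVo_{ab} = C_{n(ab)}^\top + H W_{nabn} - \bar\nabla^c W_{c(ab)n}^\top$ collapses along $\Sigma$ to $\IVo_{ab} = C_{n(ab)}^\top$; taking its tangential divergence and converting via the Bianchi identities identifies $\bar\nabla^a \IVo_{ab}$ with a tangential projection of $B_{nb}$, which vanishes by Bach-flatness. Corollary~\ref{APE-CFFs} then supplies an asymptotically PE metric in the interior conformal class, which by the uniqueness built into Proposition~\ref{unique-singyam} must be the singular Yamabe metric. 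The obstruction density is a natural scalar conformal hypersurface invariant of weight $-4$, and under the APE data (umbilic, vanishing Fialkow, vanishing divergence of $\IVo$, vanishing ambient Bach along $\Sigma$) there is no nontrivial such invariant available, so it vanishes and $\sigma$ is smooth up to $\Sigma$.

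Finally, to promote asymptotic PE to a formal PE expansion to all orders, I would work in boundary-adapted coordinates $(s, x^i)$ in which $g^o = s^{-2}(ds^2 + g_s)$ with $g_s = g_0 + s^2 g_2 + s^3 g_3 + \cdots$. Because the boundary dimension $3$ is odd, the formal PE recursion closes without a Fefferman--Graham log obstruction: the low-order coefficients $g_0, g_2$ are fixed by the APE data, $g_3$ enters as Neumann datum, and the interior Bach-flat equation expanded order-by-order in $s$ supplies the transport equations determining each higher $g_k$ consistently with the PE condition. The hardest step in the entire argument is the concrete calculation yielding $W_{nabn}|_\Sigma = 0$; the remainder is bookkeeping with the Bianchi identities and an appeal to the odd-dimensional Fefferman--Graham recursion.
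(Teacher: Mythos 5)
Your skeleton (verify the hypotheses of Corollary~\ref{APE-CFFs}, then promote the asymptotically PE structure to all orders) matches the paper's, but both of the hard steps contain genuine gaps.

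First, the claim that projecting $B_{ab}=0$ onto its normal components and ``collecting the surviving terms'' forces $W_{nabn}|_\Sigma=0$ pointwise does not work. With $\IIo=0$ the paper computes exactly what $B_{nn}|_\Sigma$ reduces to, namely
$\bar{\nabla}\csdot\bar{\nabla}\csdot\Fo+|\Fo|^2+\bar{P}\csdot\Fo$, and since $\Fo_{ab}=-W_{nabn}$ here, Bach-flatness yields a single scalar \emph{differential} constraint on the five-component tensor $\Fo$, not an algebraic identity; the remaining components $B_{na}^\top$ and $B_{(ab)_\circ}^{\otop}$ involve strictly higher transverse jets of the metric (essentially $\IVo$ and beyond) and so cannot close the system at the level of $\Fo$. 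Even integrating $B_{nn}=0$ over the compact boundary only gives $\int_\Sigma(|\Fo|^2+\bar{P}\csdot\Fo)=0$, which has no sign. The paper's argument is irreducibly global: it exploits diffeomorphism invariance of $\tfrac14\int_M|W|^2$ together with the boundary variation formula of Proposition~\ref{Weyl-squared} to produce $\int_{\partial M}[(\bar{\nabla}\csdot\IVo^b)k_b^\top-2|\Fo|^2k_n-\mathring{\bar{P}}\csdot\Fo\,k_n-(\bar{\nabla}\csdot\bar{\nabla}\csdot\Fo)k_n]\,dV_{\bar g}=0$, and only after subtracting the $B_{nn}=0$ constraint does the coefficient of $k_n$ collapse to $-|\Fo|^2$, whence $\int_\Sigma|\Fo|^2=0$ and $\Fo=0$. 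Your derivation of $\bar{\nabla}\csdot\IVo=0$ rests on the unproved $W_{nabn}|_\Sigma=0$, so it inherits the gap (the paper gets it from the tangential part of the same integral identity).

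Second, the all-orders step is asserted rather than proved. Bach-flatness is a fourth-order condition strictly weaker than Einstein, so the statement that ``the interior Bach-flat equation expanded order-by-order in $s$ supplies the transport equations determining each higher $g_k$ consistently with the PE condition'' is precisely the content of the theorem, not a consequence of the odd-dimensional Fefferman--Graham recursion (which applies to metrics already assumed Einstein). The paper instead shows that the almost-Einstein tensor $E_{ab}=D_{ab}\sigma$ of the singular Yamabe density satisfies $\nabla_n^kE|_\Sigma=0$ for all $k$: the cases $k\le 3$ use the classification of natural hypersurface invariants, weight-parity arguments on the $3$-dimensional boundary, and the vanishing of $\Vo$ on Bach-flat $4$-manifolds, while the induction for $k\ge 4$ hinges on the identity $\nabla\csdot\Omega=0$ for the tractor curvature of a Bach-flat $4$-manifold. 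None of this is captured by your appeal to the FG expansion, and without some such mechanism the proof does not close.
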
 
\begin{proof}
Let $(M^d,\cc)$ be a Bach-flat conformal manifold with an umbilic boundary. To prove the theorem, we begin by proving that $(M^d,\cc)$ contains an asymptotically Poincar\'e--Einstein metric in the conformal class of its interior. To do this, we rely on Corollary~\ref{APE-CFFs}: that the existence of an asymptotically Poincar\'e--Einstein metric in the conformal class of the interior is equivalent to the conditions that $\Fo = 0$ and that $\bar{\nabla} \csdot \IVo = 0$.

We begin by considering the Weyl-squared action, $S = \tfrac{1}{4} \int_M |W|^2 dV_g$. From diffeomorphism invariance, the variation of this action $\delta S$ with respect to a variation of the form $\delta g_{ab} = 2 \nabla_{(a} k_{b)}$ vanishes.
Recall from Proposition~\ref{Weyl-squared} that we have an identity for the variation of this action:
\begin{align*}
\delta S = &-\int_M B^{ab} h_{ab} dV_g \\&+\int_{\partial M} \big[\tfrac{4}{3} W_n{}^{bc}{}_n \IIo_{bc} \mathring{h}_{nn} -(\IVo^{ab} + \IIo_c^{(a} W_{n}{}^{b)_{\circ} c}{}_n) \mathring{h}_{ab}^{\otop} - W_n{}^{bc}{}_{n} \delta_{\otop}^{(1)} \mathring{h}_{bc} \big] dV_{\bar g}\,.
\end{align*}
We now specialize to the case where our original manifold is Bach-flat, the boundary is umbilic, and the variation arises from the pullback of an infinitesimal diffeomorphism. Then,
\begin{align*}
\int_{\partial M} \big[ -\IVo^{ab} (\nabla_{(a} k_{b)_{\circ}})^{\otop} - W_n{}^{ab}{}_{n} \delta_{\otop}^{(1)} (\nabla_{(a} k_{b)_{\circ}}) \big] dV_{\bar g} = 0 \,.
\end{align*}
Now observe that
\begin{align*}
\delta_R (\nabla_{(a} k_{b)_{\circ}}) =& \otop [\nabla_n \nabla_{(a} k_{b)_{\circ}} - 2\bar{\nabla}_{(a} (\bar{g}_{b)}^c n^d \nabla_{(c} k_{d)})] \\
=& \otop [\nabla^\top_{(a} (\nabla_n k_{b)_{\circ}})^\top + R_{n(ab)_{\circ}d} k^d - \bar{g}^c_{(a} H\nabla_{|c|} k_{b)_{\circ}} - 2\bar{\nabla}_{(a} (\bar{g}_{b)}^c n^d \nabla_{(c} k_{d)})] \\
=& \otop [\nabla^\top_{(a} (\nabla_n k_{b)_{\circ}})^\top + R_{n(ab)_{\circ}d} k^d  - H\nabla_{(a}^\top k_{b)_{\circ}} - 2\bar{\nabla}_{(a} (\bar{g}_{b)}^c n^d \nabla_{(c}^\top  k_{d)})  \\&- 2\bar{\nabla}_{(a} (\bar{g}_{b)}^c n^d n_{(c} \nabla_{n}  k_{d)})] \\
=& \otop [\nabla^\top_{(a} (\nabla_n k_{b)_{\circ}})^\top + R_{n(ab)_{\circ}d} k^d  - H\bar{\nabla}_{(a} k^\top_{b)_{\circ}}   - \bar{\nabla}_{(a} (n^d \nabla_{b)}^\top  k_{d})  - \bar{\nabla}_{(a} (\bar{g}_{b)}^c  \nabla_{n}  k_{c})] \\
=& \otop [ R_{n(ab)_{\circ}d} k^d - H \bar{\nabla}_{(a} k^\top_{b)_{\circ}} - \bar{\nabla}_{(a} (n^d \nabla_{b)}^\top  k_{d})] \\
=& \otop [ R_{n(ab)_{\circ}d} k^d  - H \bar{\nabla}_{(a} k^\top_{b)_{\circ}}- \bar{\nabla}_{(a} \bar{\nabla}_{b)}  k_n + \bar{\nabla}_{(a} H k^\top_{b)}] \\
=& \otop [ W_{n(ab)n} k_n - (\mathring{\bar{P}}_{ab} + \Fo_{ab}) k_n  - \bar{\nabla}_{a} \bar{\nabla}_{b}  k_n ]\,.
\end{align*}
In the last line above, we used the trace-free part of the Gauss--Fialkow Equation~(\ref{fialkow-gauss}) and the trace-free part of the Codazzi--Mainardi Equation~(\ref{trfr-Codazzi}).
Furthermore, when $\IIo = 0$ and $d=4$, from the trace-free part of the Gauss--Fialkow Equation~(\ref{fialkow-gauss}) we have that $W_{nabn} = -\Fo_{ab}$, so we can integrate by parts to obtain 
\begin{align*}
\int_{\partial M} \big[( \bar{\nabla} \csdot \IVo^{b}) k^\top_b - 2|\Fo|^2 k_n - \mathring{\bar{P}} \cdot \Fo k_n - (\bar{\nabla} \csdot \bar{\nabla} \csdot \Fo) k_n\big] dV_{\bar g} = 0 \,.
\end{align*}
When $\IIo = 0$, we can compute:
\begin{align*}
B_{nn} \=& n^b n^c \nabla^{\top \, a} C_{abc} + |\Fo|^2 + \bar{P} \cdot \Fo \\
\=& -n^b \nabla^{\top \, a} C_{nab} + |\Fo|^2 + \bar{P} \cdot \Fo \\
\=& -n^b \nabla^{\top \, a} C_{nab}^\top + \nabla^{\top \, a} C_{ann} + |\Fo|^2 + \bar{P} \cdot \Fo\\
\=&  \bar{\nabla} \cdot \bar{\nabla} \cdot \Fo + |\Fo|^2 + \bar{P} \cdot \Fo\,,
\end{align*}
where the first equality follows from the fact that $B_{ab} = \nabla^c C_{cab} + P^{cd} W_{adbc}$ and the last equality follows from the identity
$$C_{ann} \= -\tfrac{1}{d-2} \IIo_{ab} \bar{\nabla} \csdot \IIo^b + \bar{\nabla} \csdot \Fo_a - \tfrac{1}{2(d-1)} \bar{\nabla}_a |\IIo|^2\,.$$
Then, we have that
$$B_{nn} - |\Fo|^2 - \mathring{\bar{P}} \csdot \Fo - \bar{\nabla} \csdot \bar{\nabla} \csdot \Fo \= 0\,,$$
and so in the Bach-flat case, the integral identity in question reduces to
\begin{align} \label{divIVo-Fo2-term}
\int_{\partial M} \big[ ( \bar{\nabla} \csdot \IVo^{b}) k^\top_b - |\Fo|^2 k_n \big] dV_{\bar g} = 0 \,.
\end{align}

Recall that the above identity holds for any $k \in \Gamma(TM)$, and so in particular we may choose $k|_{\Sigma} = n$; this implies that
$$\int_{\partial M} |\Fo|^2 dV_{\bar{g}} = 0\,.$$
So it follows that $\Fo = 0$. Alternatively, we may choose $k|_{\Sigma} \in \Gamma(T \Sigma)$ arbitrarily in Equation~(\ref{divIVo-Fo2-term}), finding that $\bar{\nabla} \csdot \IVo = 0$. This establishes that the conformal class metric admits an asymptotically Poincar\'e--Einstein metric in the interior. 

Observe that because $\IIo = \IIIo = 0$, one may easily establish (according to~\cite{GGHW}) that the obstruction density $\mathcal{B}_{\sigma}$ of Proposition~\ref{singyam} vanishes, and hence the singular Yamabe density is smooth, following from Proposition~\ref{unique-singyam}, and hence so is the singular Yamabe metric. Thus, from Corollary~\ref{unique-E}, we have that there exists a unique almost Einstein tensor $E_{ab}$ determined by the boundary embedding and the conformal structure. Note that the transverse jets of $E$ on $\Sigma$ capture the higher conformal fundamental forms~\cite{BGW1}.

Now, to show that $(M^d,\cc)$ admits a (formal to all orders) Poincar\'e--Einstein metric in the conformal class of its interior, it is sufficient to show that the almost Einstein tensor $E_{ab}$ above vanishes to all orders~\cite{GoverAE}---that is, that $\nabla_n^k E_{ab}|_{\Sigma} = 0$ for every $k \in \mathbb{Z}_{\geq 0}$.
As the formal vanishing of $E_{ab}$ is a conformally-invariant condition, it is sufficient to check that $\nabla_n^k E_{ab}|_{\Sigma} = 0$ in some choice of metric representative $g \in \cc$. To fix the metric $g$, we pick any metric $\bar{g} \in \bar{\cc}$ and then extend it to $g \in \cc$ in such a way (according to~\cite[Proposition 7.15]{GPt}) that each $T$-curvature vanishes---that is, the mean curvature $H^g$ vanishes, and all higher transverse order analogs also vanish.


Having established already that $\IIo = \IIIo = 0$, we already have that $\nabla_n^k E \= 0$ for $k = 0,1$. By explicit calculation (see~\cite[Proof of Corollary 4.9]{BGW1}), we see that $\otop \nabla_n^2 E_{ab}|_{\Sigma}$ only involves at most one normal derivative of the metric, and similarly $n^a \nabla_n^2 E_{ab} \= 0$, as may be seen from~\cite[Proof of Lemma 4.11]{BGW1}. As the $T$-curvatures vanish, it follows from the classification result of~\cite{blitz-classifying} (namely, that Riemannian invariants are expressible in terms of intrinsic curvatures, conformal fundamental forms, and $T$-curvatures) that $\nabla_n^2 E|_{\Sigma}$ is expressible in terms of (intrinsic) curvatures of $(\Sigma,\bar{g})$. However, $\nabla_n^2 E|_{\Sigma}$ has weight  $-1$, and there exist no non-vanishing sections expressible in terms of curvatures of $(\Sigma,\bar{g})$ with odd weight. Thus, $\nabla_n^2 E|_{\Sigma} = 0$.

\medskip

Next, consider the case where $k = 3$. 
Because $\sigma$ solves the singular Yamabe problem exactly, we may apply the Thomas-$D$ operator to the equation $\tfrac{1}{d^2} |D \sigma|^2 = 1$ to obtain (via the Leibniz failure lemma of~\cite{will1})
\begin{align*}
\tfrac{1}{d^2} (D^B \sigma)(D_A D_B \sigma) &= X_A |E|^2\,.
\end{align*}
Evaluating the $X$ slot of the above equation and noting that we have already established that $E = \mathcal{O}(\sigma^3)$, we therefore have that
$$\sigma (\nabla \cdot \nabla \cdot E + (d-1) P \cdot E) = \mathcal{O}(\sigma^6)\,,$$
or
$$\sigma \nabla \cdot \nabla \cdot E = - (d-1) \sigma P \cdot E + \mathcal{O}(\sigma^6) = \mathcal{O}(\sigma^4)\,.$$
Hence,
$$\nabla \cdot \nabla \cdot E = \mathcal{O}(\sigma^3)\,.$$
As $\nabla \cdot \nabla \cdot E = n^a n^b \nabla_n^2 E_{ab} + \text{ltots}$, and the lower transverse order terms vanish to higher order in $\sigma$, it follows that
$$n^a n^b \nabla_n^3 E_{ab} \= 0\,.$$

Next, we consider the irreducible component $\otop \nabla_n^3 E$. Observe from~\cite[Lemma 4.7]{BGW1}, the operator $\delta_{4,1}^{(3)} : \Gamma(\odot^2_{\circ} T^* M[1]) \rightarrow \Gamma(\odot^2_{\circ} T^* \Sigma[-2])$ is well-defined and has leading (transverse) symbol $\otop \nabla_n^3$. Thus, from the classification offered in~\cite{blitz-classifying}, it follows that $$\delta_{4,1}^{(3)} E \= \Vo + \text{curvatures invariants of } (\Sigma,\bar{g}) + \text{terms dependent on } \IVo\,.$$
 But note that $\Vo$ vanishes on 4-dimensional Bach-flat manifolds, and there are no non-vanishing sections expressible in terms of curvatures of $(\Sigma,\bar{g})$ of $\odot^2_{\circ} T^* \Sigma[-2]$ on a 3-manifold. Finally, as $\IVo$ has weight $-1$, there are no (non-vanishing) sections of $\odot^2_{\circ} T^* \Sigma[-2]$ that are constructible from $\IVo$ and solely intrinsic geometric quantities---so this term may not contribute. But then it follows that
$$0 \= \otop \nabla_n^3 E + \operatorname{ltots}(E)\,,$$
where $\operatorname{ltots}$ are lower transverse order differential operators, which vanish by the above computations, hence $\otop \nabla_n^3 E = 0$.

Finally we consider the component $(n^a \nabla_n^3 E_{ab})^\top$. Again from the classification result of~\cite{blitz-classifying}, the tensor in question must be expressible in terms of curvatures intrinsic to $(\Sigma,\bar{g})$ and $\IVo$ (as all other quantities appearing in the classification result vanish). It is easy to establish that the only natural sections of $T^* \Sigma[-3]$ that depend on $\IVo$ are $\bar{\nabla} \cdot \IVo$ and $n^a \IVo_{ab}$. However, both of these invariants vanish---the former by the above proof of the manifold admitting an asymptotically Poincar\'e--Einstein metric in its interior and the latter by definition. Thus, $(n^a \nabla_n^3 E_{ab})^\top$ cannot depend on $\IVo$, and hence is expressible entirely in terms of curvature invariants of $(\Sigma,\bar{g})$. But no such non-vanishing Riemannian invariants exist, and so it follows that $(n^a \nabla_n^3 E_{ab})^\top = 0$.

From the considerations above, we have thus established that, in the metric representative where $T_! = T_2 = T_3 = T_4 = 0$, we have that $\nabla_n^3 E \= 0$.

\medskip

It now remains to show that $\nabla_n^k E \= 0$ for every $k \geq 4$. We do so via induction using $k=3$ as the base case. Suppose that $\nabla_n^{\ell}E \= 0$ for all $\ell < k$. We would now like to examine $\nabla_n^k E|_{\Sigma}$. From~\cite[Proof of Lemma 4.11]{BGW1}, we have that since $k \neq d-1 = 3$,
$$n^a \nabla_n^k E_{ab} \= 0\,.$$
Thus we must only examine $\otop \nabla_n^k E|_{\Sigma}$. In what follows, we will implicitly set to zero those terms that must vanish by the induction, and $I^A := \tfrac{1}{d} D^A \sigma$, where $\sigma$ is the unique singular Yamabe density determined above. For a choice of metric representative $g \in \cc$ such that $\sigma = [g; s]$, we have that
\begin{align*}
\otop \nabla_n^k E \=& \otop \nabla_n^k Z^C_{(a} \nabla_{b)_{\circ}} I_C \\
\=& \otop \nabla_n^{k-1} \left[ (\nabla_n Z_{(a}^C) \nabla_{b)_{\circ}} I_C + Z_{(a}^C \nabla_n \nabla_{b)_{\circ}} I_C \right] \\
\=& \otop \nabla_n^{k-1} \left[ Z_{(a}^C \nabla_{b)_{\circ}} \nabla_n I_C + Z^C_{(a} \Omega_{nb)_{\circ} C}{}^D I_D \right] \\
\=& \otop \nabla_n^{k-1} \left[ \nabla_{(a} \nabla_n \nabla_{b)_{\circ}} s + Z_{(a}^C \Omega_{nb)_{\circ} C}{}^D I_D \right] \\ 
\=& \otop \nabla_n^{k-1} \left[ -2\nabla_{(a} \nabla_{b)_{\circ}}  s \rho + Z_{(a}^C \Omega_{nb)_{\circ} C}{}^D I_D \right] \\ 
\=& -\otop \nabla_n^{k-1} \left[ Z_{(a}^C \Omega_{b)_{\circ}n C}{}^D I_D \right] \\
\=& -\otop \nabla_n^{k-2} \nabla^d \left[ Z_{(a}^C \Omega_{b)_{\circ}d C}{}^D I_D \right] \\
\=& \otop \nabla_n^{k-2} \Omega_{d (a| C}{}^D \nabla^d  Z_{b)_{\circ}}^C  I_D \\
\=& 0\,.
\end{align*}
The first identity follows from the fact that $Z^C_{(a} \nabla_{b)_{\circ}} I_C = E_{ab}$~\cite{will1}. The third identity follows by noting that $Z^C \nabla_n I_C$ has lower transverse order than $k+1$ and thus vanishes by the induction hypothesis. The fourth identity follows covariant derivatives of $Z$ and the induction hypothesis, and the fifth identity also follows from the induction hypothesis and the observation~\cite{will1} that $\nabla_n s = 1 - 2 s \rho$, where $\rho = -\tfrac{1}{d}(\Delta s + Js)$. The sixth identity then follows from observing that $s$ must be differentiated, yielding lower order terms which vanish by the induction hypothesis. The seventh identity follows from the identity $n \nabla_n \= \nabla - \nabla^\top$, and the eigth identity follows from a critical result which holds on a 4-dimensional Bach-flat manifold~\cite{GoverSombergSoucek}: that $\nabla \cdot \Omega = 0$. It thus follows that $\nabla_n^k E \= 0$ for all $k$, completing the proof.

%
%
%
%
%
\end{proof}

\begin{remark}
Observe that, in the above proof, we could have terminated our efforts once we had established that the singular Yamabe metric is asymptotically Poincar\'e--Einstein and instead invoked the result of~\cite{AlaeeWool}. To do so would have required establishing their higher-order constraint alluded to above.
\end{remark}

\begin{remark}
Note that Theorem~\ref{umbilic-PE} is equivalent to the statement that, when there exists a unique solution to the boundary-value Bach-flat problem on a given 4-manifold with umbilic boundary, its Dirichlet-to-Neumann map in Display~(\ref{pair-mapping}) becomes
$$(\bar{g}_{ab}, 0) \mapsto (0, -\operatorname{DN}^{(4)})\,,$$
where $\operatorname{DN}^{(4)}$ is the image of the Dirichlet-to-Neumann map for the Poincar\'e--Einstein problem described in~\cite{BGKW}.
\end{remark}

As a corollary, additional hypotheses ensure that the formal Poincar\'e--Einstein structure fixed by the above theorem is in fact a genuine Poincar\'e--Einstein structure in a collar neighborhood of the boundary.
\begin{corollary}
Let $(M^4,\cc)$ be an analytic compact Bach-flat conformal manifold with umbilic boundary $\Sigma$. Then there exists a Poincar\'e--Einstein metric in the conformal class of the interior of a collar neighborhood of $\Sigma$.
\end{corollary}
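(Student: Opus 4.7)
The plan is to promote the formal-to-all-orders Poincar\'e--Einstein metric provided by Theorem~\ref{umbilic-PE} to a genuine one using analyticity, via the basic principle that an analytic function vanishing to infinite order along an analytic hypersurface must vanish identically in a collar neighborhood (its Taylor series at $\Sigma$ has a positive radius of convergence).

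First, I would invoke Theorem~\ref{umbilic-PE} to obtain the (smooth) singular Yamabe density $\sigma \in \Gamma(\ce M[1])$ with vanishing obstruction density $\mathcal{B}_\sigma$, giving the singular Yamabe metric $g^o = \sigma^{-2}\bg$ on the interior. The first substantive step is upgrading $\sigma$ from smooth to analytic. Since $(M,\cc)$ is analytic and the umbilic boundary can be placed in analytic coordinates, the singular Yamabe equation $\tfrac{1}{d^2}|D\sigma|^2 = 1$ (which holds exactly because $\mathcal{B}_\sigma = 0$) is an analytic equation with analytic boundary data. By a Cauchy--Kowalewski type argument, carried out after reducing to Fefferman--Graham normal form so that the equation becomes a recursion/ODE in the defining function with analytic coefficients, one obtains an analytic solution that, by the uniqueness in Proposition~\ref{unique-singyam}, must coincide with $\sigma$. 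Hence $\sigma$ and therefore $E_{ab} := D_{ab}\sigma$ are analytic on a neighborhood of $\Sigma$.

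Next, by Theorem~\ref{umbilic-PE}, all normal jets $\nabla_n^k E|_\Sigma$ vanish. Because $E$ is analytic in a neighborhood of $\Sigma$ and vanishes to infinite order along the analytic hypersurface $\Sigma$, its Taylor expansion at each boundary point converges to zero in some uniform collar neighborhood. Thus $E \equiv 0$ there, which is precisely $\mathring P^{g^o} = 0$, so $g^o$ is genuinely Einstein in this collar; combined with the singular Yamabe normalization $|d\sigma|^2_{g^o} \eqSig 1$ this gives a true Poincar\'e--Einstein metric in the conformal class of the interior of the collar.

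The main obstacle is the analytic regularity of $\sigma$: Proposition~\ref{unique-singyam} only supplies smoothness, so one must either develop analytic regularity for the singular Yamabe equation directly or instead bypass this by appealing to Fefferman--Graham's analytic existence theorem for Poincar\'e--Einstein metrics in a collar of an analytic odd-dimensional conformal boundary. In the present setting the latter applies cleanly because Theorem~\ref{umbilic-PE} guarantees that the obstructions to formal PE filling all vanish, so one obtains an analytic PE metric in a collar, and uniqueness of the singular Yamabe solution then forces this metric to represent the interior conformal class.
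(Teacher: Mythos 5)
Your fallback route is essentially the paper's proof: the paper invokes Anderson's analytic existence theorem \cite{anderson2010} for Poincar\'e--Einstein metrics in a collar of an analytic three-dimensional boundary, noting that the Dirichlet datum $\bar g$ and the Neumann datum $g^{(3)} = -\IVo$ (identified via Theorem~\ref{DN}) are analytic because the conformal manifold and the embedding are, with Theorem~\ref{umbilic-PE} supplying the matching formal solution. If you commit to that route you are on the paper's track, though both you and the paper are terse about the final step --- why the analytic PE metric produced on an abstract collar lies in the \emph{given} conformal class. Your appeal to ``uniqueness of the singular Yamabe solution'' is not quite the right mechanism: that uniqueness operates within a fixed conformal class and cannot by itself identify two a priori different conformal classes. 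The correct statement is that, in Fefferman--Graham normal form relative to an analytic representative, both the given conformal class and Anderson's PE metric are analytic and have identical Taylor expansions to all orders along $\Sigma$ (the former because $E$ vanishes to all orders by Theorem~\ref{umbilic-PE}, the latter because the formal PE expansion is determined by $(\bar g, g^{(3)})$), hence they coincide on a collar.

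Your primary route, by contrast, contains a genuine gap that you correctly flag but do not close: analytic regularity of the singular Yamabe density $\sigma$. Cauchy--Kowalewski does not apply directly, because the hypersurface $\Sigma = \sigma^{-1}(0)$ is characteristic for the equation $\tfrac{1}{d^2}|D\sigma|^2 = 1$: the equation degenerates precisely where $\sigma = 0$, which is why the general solution is only polyhomogeneous (log terms appear whenever the obstruction density is nonzero). Even with the obstruction vanishing, the smoothness of $\sigma$ guaranteed by Proposition~\ref{unique-singyam} does not upgrade to analyticity without a separate boundary-regularity theorem, and neither the paper nor its cited references supply one. Consequently the step ``$E$ is analytic and vanishes to infinite order along $\Sigma$, hence vanishes on a collar'' cannot be run as stated. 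The fallback is therefore not optional; it is the proof.
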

\begin{proof}
As $(M^4,\cc)$ has an umbilic boundary $\Sigma$ and is Bach-flat, it follows from Theorem~\ref{umbilic-PE} that the almost Einstein tensor vanishes formally to all orders, i.e. $E = \mathcal{O}(\sigma^\infty)$. As the conformal manifold is analytic, it follows that the induced conformal class of metrics on the boundary is also analytic, as is any extrinsic data (including the fourth fundamental form $\IVo$). From~\cite{anderson2010}, it follows that when the boundary data (the induced metric and the image of the Dirichlet-to-Neumann map $g^{(3)}$) is analytic, there exists a unique Poincar\'e--Einstein metric in the interior of a collar neighborhood of the boundary. But the image of the Dirichlet-to-Neumann map $\operatorname{DN}^{(4)}$ is $-\IVo$, which is analytic. The corollary follows.

%
%
\end{proof}

\medskip

It is well-known and easily verified (see e.g.~\cite{FG}) that on an Einstein 4-manifold, the Bach tensor vanishes everywhere. However, a simple counting shows that for most Bach-flat conformal 4-manifolds manifolds with boundary, these manifolds are not Poincar\'e--Einstein.
Having established Theorem~\ref{umbilic-PE}, we now understand why that is the case: a Bach-flat manifold with boundary need only have an umbilic boundary to ensure that it is (at least asymptotically) Poincar\'e--Einstein. While this is still quite a strong condition, it is far weaker than might have been observed naively, namely that $\IIo = \IIIo = 0$~\cite{BGW1} and that $\bar{\nabla} \cdot \IVo = 0$. Nonetheless, we can provide a method of constructing many such manifolds.
%
\begin{corollary}\label{einstein-slicing}
Let $(M^4, g)$ be a closed Einstein manifold and let $\iota : \Sigma \hookrightarrow M$ be an embedded separating hypersurface such that $M = M_+ \sqcup \Sigma \sqcup M_-$. Then, there exists a unique metric $g_+$ in the conformal class of $g|_{M_+}$ such that $(M_+, g^+)$ is asymptotically hyperbolic, has constant (negative) scalar curvature, and is Bach-flat. Furthermore, $\IIo = 0$ if and only if $g^+$ is, formally to all orders, Poincar\'e--Einstein.
\end{corollary}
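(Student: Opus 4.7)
The plan is to realize $M_+ \cup \Sigma$ as a compact Bach-flat conformal manifold with boundary, extract $g_+$ as the associated singular Yamabe representative, and then apply Theorem~\ref{umbilic-PE} in the case $\IIo = 0$.

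First I would observe that an Einstein 4-manifold has identically vanishing Bach tensor, a standard fact in the subject. Since Bach-flatness is a pointwise conformally invariant condition, the restricted conformal class $\cc := [g|_{M_+}]$ is Bach-flat, and hence $(M_+ \cup \Sigma, \cc)$ is a smooth compact Bach-flat conformal manifold with boundary $\Sigma$. I would then invoke Proposition~\ref{unique-singyam} to produce the unique singular Yamabe density $\sigma$ on this conformal manifold with boundary, and set $g_+ := g/\sigma^2 \in \cc|_{M_+}$. By the content of Proposition~\ref{singyam}, $g_+$ has constant negative scalar curvature and is asymptotically hyperbolic (with $|d\sigma|^2_g \= 1$ along $\Sigma$ extracted from the singular Yamabe equation). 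Since Bach-flatness is conformally invariant and $g|_{M_+}$ is Bach-flat, $g_+$ is itself Bach-flat on $M_+$. For uniqueness, any other $g'_+ \in \cc|_{M_+}$ with the listed three properties corresponds to a density $\sigma'$ satisfying the singular Yamabe equation, forcing $\sigma' = \sigma$ and hence $g'_+ = g_+$ by Proposition~\ref{unique-singyam}.

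For the ``if and only if'' statement: if $\IIo = 0$, then $\Sigma$ is umbilic in $(M_+ \cup \Sigma, \cc)$, so Theorem~\ref{umbilic-PE} applies directly to give that $g_+$ is, formally to all orders, Poincar\'e--Einstein. Conversely, if $g_+$ is formally Poincar\'e--Einstein, then $\IIo = 0$ along $\Sigma$ by the well-known fact (recalled in the introduction) that a PE metric forces a vanishing trace-free second fundamental form on its conformal infinity.

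The main obstacle I anticipate is purely bookkeeping: carefully matching the tractor-form singular Yamabe equation in Proposition~\ref{singyam} to the classical triple ``asymptotically hyperbolic, constant negative scalar curvature, conformal to $g$,'' so that uniqueness can be cleanly inherited from Proposition~\ref{unique-singyam}. The Bach-flat condition is essentially ``free'' here because the ambient conformal class is itself Bach-flat, so the real work is only in ensuring that the first two properties genuinely characterise the singular Yamabe representative. Once that identification is in place, the remaining content is a direct citation of Theorem~\ref{umbilic-PE} together with the standard PE boundary behaviour, so essentially no new calculation is required.
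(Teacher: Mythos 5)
Your proposal is correct and follows essentially the same route as the paper: existence and uniqueness of $g^+$ via the singular Yamabe solution of Proposition~\ref{unique-singyam}, Bach-flatness from conformal invariance of the Bach tensor on an Einstein (hence Bach-flat) background, Theorem~\ref{umbilic-PE} for the umbilic direction, and the standard fact that a PE metric forces $\IIo=0$ for the converse (which the paper phrases contrapositively, citing that $\IIo\neq 0$ excludes asymptotically hyperbolic PE representatives). No substantive difference.
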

\begin{proof}
First, observe that for any separating hypersurface embedding, one may always find a unique asymptotically hyperbolic metric $g^+$  in the conformal class of $g|_{M^+}$ such that $(M_+,g^+)$ has constant scalar curvature (see Proposition~\ref{unique-singyam}); this is the singular Yamabe solution $(M_+,g^+)$. Now, because $g$ is Einstein, it follows that $g$ is also Bach-flat. And so, as $g^+$ is in the conformal class of $g|_{M_+}$ and the Bach tensor is a conformal invariant in four dimensions, it follows that $g^+$ is Bach-flat as well.

Next, suppose that $\IIo \neq 0$. Then the conformal class of metrics $[g|_{M_+}]$ does not contain any asymptotically hyperbolic Poincar\'e--Einstein metrics~\cite{GoverAE,LeBrun}. Finally, we have from Theorem~\ref{umbilic-PE} that when $\IIo = 0$, there exists a unique (formally to all orders) Poincar\'e--Einstein metric $g_+$ in $(M_+,[g|_{M_+}])$. But as $g_+$ is (formally) Poincar\'e--Einstein, it is also asymptotically hyperbolic with (asymptotically) constant negative scalar curvature. But by the uniqueness of $g^+$ it follows that $g_+$ must agree with $g^+$. Thus, $g^+$ is (formally to all orders) Poincar\'e--Einstein.
\end{proof}
Umbilic hypersurfaces are generically quite rare. This explains the observation above that most Bach-flat conformal manifolds with boundary do not admit a Poincar\'e--Einstein metric in its conformal class.

\medskip

Theorem~\ref{umbilic-PE} can be extended to the general Fefferman--Graham flat problem in a natural way, by demanding that $\IIo = \IIIo = \cdots = \FF{d/2} = 0$. Asymptotics of the $d=6$ case has been examined by~\cite{anast}. This leads naturally to the following conjecture:
\begin{conjecture}
Let $(M^d,\cc)$ with $d$ even be a Fefferman--Graham-flat conformal manifold whose boundary satisfies $\IIo = \IIIo = \cdots = \FF{d/2} = 0$. Then there exists a (formally to all orders) Poincar\'e--Einstein metric representative in $(M_+,\cc)$.
\end{conjecture}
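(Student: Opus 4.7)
The plan is to emulate the strategy of Theorem~\ref{umbilic-PE}, with $\int_M \mathcal{L}_d dV_g$ playing the role of the Weyl-squared action and with the tower of higher conformal fundamental forms $\FF{k}$ replacing $\IIo$ and $\IVo$. First I would generalize Proposition~\ref{Weyl-squared} by applying the decomposition procedure of Theorem~\ref{main} to $\delta \int_M \mathcal{L}_d dV_g$. By Proposition~\ref{obstruction-variation} the bulk piece is $-\int_M \mathcal{O}^{(d)}_{ab} h^{ab} dV_g$, while the groomed boundary integral takes the form~\eqref{variational-formula}. The crucial structural point is that each coefficient $\tilde{P}_{(i)}, \tilde{Q}^a_{(i)}, \tilde{R}^{ab}_{(i)}$ is a conformal invariant of the boundary embedding of the appropriate weight, and so (after using the classification results of~\cite{blitz-classifying} and the remark following~\eqref{variational-formula}) each decomposes into a sum of conformal fundamental forms paired with intrinsic curvatures.

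Next I would exploit diffeomorphism invariance. Choosing $h_{ab} = 2\nabla_{(a}k_{b)}$ and using FG-flatness $\mathcal{O}^{(d)}=0$ forces the boundary integral to vanish for every vector field $k$ on $M$. Splitting $k|_\Sigma$ into $k_n$ and $k^\top$ gives two independent families of constraints. Under $\IIo = \IIIo = \cdots = \FF{d/2} = 0$, the coefficients $\tilde{R}_{(i)}$ with $i \leq d/2 - 1$ and their accompanying $\tilde{P}_{(i)}, \tilde{Q}_{(i)}$ vanish by the classification result, leaving only the ``late'' coefficients, which by weight and transverse-order counting must be built from $\FF{d/2+1}, \ldots, \FF{d-1}, \bar{\nabla} \cdot \FF{d}$ and intrinsic data. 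The analog of the computation
\[ B_{nn} - |\Fo|^2 - \mathring{\bar P} \cdot \Fo - \bar{\nabla} \cdot \bar{\nabla} \cdot \Fo \= 0 \]
used in Theorem~\ref{umbilic-PE}---obtained by expanding $n^an^b\mathcal{O}^{(d)}_{ab}$ using the definition of the FG obstruction tensor and the Codazzi--Mainardi equation---should then convert the $k_n$ identity into a sum of norm-squares of higher $\FF{k}$ integrated over $\Sigma$, forcing each of them to vanish. The $k^\top$ identity then forces $\bar{\nabla} \cdot \FF{d} = 0$. At this point Corollary~\ref{APE-CFFs} yields an asymptotically Poincaré--Einstein representative.

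To upgrade to \emph{formally-to-all-orders} PE, I would mimic the inductive Thomas-$D$ argument from Theorem~\ref{umbilic-PE}. The hypothesis $\IIo = \IIIo = 0$ already ensures (via~\cite{GGHW}-type arguments) the vanishing of the singular Yamabe obstruction, so by Propositions~\ref{singyam}--\ref{unique-singyam} the singular Yamabe density $\sigma$ is smooth and $E = D\sigma$ is determined uniquely by Corollary~\ref{unique-E}. After fixing the $T$-curvature gauge of~\cite{GPt}, one checks $\nabla_n^k E \eqSig 0$ inductively, using: the Leibniz failure identity obtained by applying $D_A$ to $\tfrac{1}{d^2}|D\sigma|^2=1$ to handle the $n^an^b$ component; the classification of~\cite{blitz-classifying} together with weight parity to rule out intrinsic curvature contributions to the remaining components; and an FG-flat analog of the identity $\nabla \cdot \Omega = 0$ (used in the 4-dimensional step) to close the induction on $\otop \nabla_n^k E$.

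The main obstacle will be the last ingredient. In dimension four, Bach-flatness is equivalent to a divergence-free tractor curvature $\Omega$, which is precisely what makes the final displayed calculation in Theorem~\ref{umbilic-PE} collapse to zero. For general even $d$, the FG-flat condition $\mathcal{O}^{(d)}=0$ should yield an analogous divergence identity for an iterated derivative of $\Omega$ (or equivalently a prolonged tractor curvature), but establishing the right formulation and verifying it is not routine; it is where I would expect the bulk of the technical work to lie. A secondary obstacle is the careful bookkeeping needed to verify that the groomed boundary integrand really does reduce to a sum of squares of the higher $\FF{k}$ under the assumed vanishing hypotheses---the combinatorics of~\eqref{variational-formula} is substantially more intricate than in the $d=4$ case.
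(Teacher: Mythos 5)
The statement you are addressing is stated in the paper only as a \emph{conjecture}; the authors give no proof and explicitly say that a general result is ``currently out of reach'' because Theorem~\ref{umbilic-PE} relied on an explicit identity for the Bach tensor (the divergence-free tractor curvature, $\nabla\cdot\Omega=0$ on Bach-flat $4$-manifolds, from~\cite{GoverSombergSoucek}) whose higher-dimensional Fefferman--Graham analog is not known. Your proposal is therefore a research program rather than a proof, and to your credit you identify the fatal gap yourself: without an identity expressing $\mathcal{O}^{(d)}=0$ as (or implying) a divergence condition on an iterated/prolonged tractor curvature, the inductive step showing $\otop\nabla_n^k E\eqSig 0$ cannot close. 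That single missing ingredient is precisely what separates your outline from a proof, and it is the same obstacle the authors name.

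Two further points where the plan is thinner than it looks. First, the ``sum of squares'' reduction of the $k_n$-constraint is special to $d=4$: there the surviving boundary term is literally $|\Fo|^2 k_n$, a square of a single fundamental form. In general even $d$ the weight pairing~\eqref{pair2} produces cross terms $\FF{j}\cdot\FF{d+1-j}$ with $j\neq d+1-j$, which are not sign-definite; after imposing $\FF{k}=0$ for $k\leq d/2$ you must still verify that every surviving term is either a genuine square or is killed by the hypotheses, and nothing in the paper's machinery guarantees this. Second, your appeal to~\cite{GGHW} to kill the singular Yamabe obstruction density uses only $\IIo=\IIIo=0$; that step is cited in the paper only for $d=4$, and whether the obstruction density in dimension $d$ vanishes under $\IIo=\cdots=\FF{d/2}=0$ is itself a nontrivial claim that would need proof. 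In short: the architecture of your argument faithfully mirrors the authors' own suggested route, but the statement remains open, and the proposal contains (at least) one essential unproved lemma on which everything hinges.
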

Proving such a conjecture in general is significantly more challenging than for Theorem~\ref{umbilic-PE}, as that theorem relied on an explicit identity for the Bach tensor---we suspect that similar identities hold for higher-dimensional Fefferman--Graham obstruction tensors, but a general result is currently out of reach.

\bibliographystyle{plain}
\bibliography{biblio}

\end{document}